\documentclass[10pt]{amsart}

\usepackage{amsmath}
\usepackage{amssymb}
\usepackage{amsfonts}
\usepackage{amscd}
\usepackage{geometry}
\usepackage[all,cmtip]{xy}
\usepackage{url}
\usepackage{amsmath, amsthm, thm-restate}

\usepackage{amsmath}
\usepackage{amssymb}
\usepackage{amsfonts}
\usepackage{geometry}
\usepackage{url}
\usepackage{setspace}
\usepackage{amsthm}
\pagestyle{plain}

\setlength{\topmargin}{0.0in}
\setlength{\textheight}{21.5cm}
\setlength{\evensidemargin}{0.25in}
\setlength{\oddsidemargin}{0.25in}
\setlength{\headsep}{0.1cm}
\setlength{\textwidth}{15.0cm}

\usepackage{amsthm}
\pagestyle{plain}
\newtheorem{theorem}{Theorem}[section]
\newtheorem{lemma}[theorem]{Lemma}
\newtheorem{proposition}[theorem]{Proposition}
\newtheorem{corollary}[theorem]{Corollary}

\theoremstyle{definition}
\newtheorem{definition}[theorem]{Definition}
\newtheorem{example}[theorem]{Example}

\newtheorem{remark}[theorem]{Remark}

\newcommand{\Hom}{\mathrm{Hom}}

\newcommand{\Ad}{\mathrm{Ad}}

\newcommand{\Res}{\mathrm{Res}}
\newcommand{\Ker}{\mathrm{Ker}}
\newcommand{\Tr}{\mathrm{Tr}}
\newcommand{\End}{\mathrm{End}}

\begin{document}

\title{Chevalley restriction theorem for vector-valued functions on           
quantum groups}

\author{Martina Balagovi\' c }
\address{Department of Mathematics,  Massachusetts Institute
of Technology, Cambridge, MA 02139, USA}
\email{martinab@math.mit.edu}

\begin{abstract}
 We generalize Chevalley's theorem about restriction $\Res: \mathbb{C} [\mathfrak{g}]^{\mathfrak{g}} \to \mathbb{C} [ \mathfrak{h}]^W$ to the case when a semisimple Lie algebra $\mathfrak{g}$ is replaced by a quantum group and the target space $\mathbb{C}$ of the polynomial maps is replaced by a finite dimensional representation $V$ of this quantum group. We prove that the restriction map $\Res:(O_{q}(G)\otimes V)^{U_{q}(\mathfrak{g})}\to O(H)\otimes V$ is injective and describe the image.
\end{abstract}

\maketitle

\section{ Introduction} Let $\mathfrak{g}$ be a finite dimensional semisimple Lie algebra over $\mathbb{C}$, $\mathfrak{h}$ its Cartan subalgebra, $W$ its Weyl group, $G$ the connected simply connected algebraic group associated to $\mathfrak{g}$, and $H$ the maximal torus of $G$ corresponding to $\mathfrak{h}$. 

We can consider the space $\mathbb{C} [\mathfrak{g}]^{\mathfrak{g}}$ of polynomial functions from $\mathfrak{g}$ to $\mathbb{C}$, invariant with respect to the coadjoint action of $\mathfrak{g}$. Such functions can be restricted to polynomial functions on $\mathfrak{h}$. The Chevalley restriction theorem (the graded version of the Harish-Chandra isomorphism) states that the restriction map $\Res: \mathbb{C} [\mathfrak{g}]^{\mathfrak{g}} \to \mathbb{C} [\mathfrak{h}]$ is injective, and that the image is $\mathbb{C} [\mathfrak{h}]^{W}$, the space of polynomial functions $\mathfrak{h}\to \mathbb{C}$ invariant under the action of $W$.

There is also a version of this isomorphism for quantum groups, see \cite{T}.

Recently Khoroshkin, Nazarov and Vinberg \cite{KNV} generalized this result to the case when the target space of polynomial maps is $V$, a finite dimensional representation of $\mathfrak{g}$. Let $E_{i}$ be the Chevalley generators of $\mathfrak{g}$ associated to positive simple roots $\alpha_{i}$. Use dot to denote the action $X.v$ of $\mathfrak{g}$ on $V$, and also the action $X.f(y)$ of  $\mathfrak{g}$ on the value of a function $f:\mathfrak{h}\to V$ (so, if we think of $f$ as an element of $\mathbb{C}[\mathfrak{h}]\otimes V$, then $E_{i}.f=(1\otimes E_{i})f$). \cite{KNV} showed:
\begin{restatable*}{theorem}{FIRST}[KNV]
 The map $\Res: (\mathbb{C}[\mathfrak{g}]\otimes V)^{G} \to \mathbb{C}[\mathfrak{h}]\otimes V$ is injective. Its image consists of those functions $f\in \mathbb{C}[\mathfrak{h}]\otimes V$ that satisfy:
 \begin{enumerate}
\item $f\in \mathbb{C}[\mathfrak{h}]\otimes V[0]$;
\item $f$ is $W$-equivariant;
\item for every simple root $\alpha_i\in \Pi$ and every $n\in \mathbb{N}$, the polynomial $E_{i}^n .f $ is divisible by $\alpha_i^n$. 
\end{enumerate}
\label{KNV1}
\end{restatable*}

We want to generalize this description of the image of the restriction map to quantum groups. This is more convenient to do in the setting of algebraic groups, to which \cite{KNV} theorem generalizes naturally and with an almost identical proof.

Consider the Hopf algebra $O(G)$ of polynomial functions on the group $G$. It comes with a natural restriction map to $O(H)$, which can be extended to the case when the target space is $V$, giving $O(G)\otimes V\to O(H)\otimes V$. We are interested in what this restriction map does to equivariant functions $(O(G)\otimes V)^G$, namely the ones that satisfy
$f(gxg^{-1})=g.f(x)$ for all $x,g\in G$. The  result of \cite{KNV} modified to the setting of algebraic groups is:
\begin{restatable*}{theorem}{SECOND}\label{KNV2}
 The map $\Res: (O(G)\otimes V)^G \to O(H)\otimes V$ is injective. Its image consists of those functions $f\in O(H)\otimes V$ that satisfy:
 \begin{enumerate}
\item $f\in O(H)\otimes V[0]$;
\item $f$ is $W$-equivariant;
\item for every simple root  $\alpha_i$ and every $n\in \mathbb{N}$, the polynomial $E_{i}^n .f $ is divisible by $(1-e^{\alpha_i})^n$. 
\end{enumerate}
\label{KNV2}
\end{restatable*}

This setting is more convenient for generalization to quantum groups for the following reason. By the Peter-Weyl theorem, $O(G)\cong \bigoplus_{L} L^*\otimes L$, where the direct sum is taken over isomorphism classes of finite dimensional irreducible representations of $G$ (equivalently: over dominant integral weights $\lambda$ - in that case $L=L_{\lambda}$), and $L^*$ denotes the dual representation. In this setting $O(H)\cong \oplus_{\mu}\mathbb{C}_{\mu}^*\otimes \mathbb{C}_{\mu}$, where for any integral weight $\mu$, $\mathbb{C}_{\mu}$ is the one dimensional representation on which $H$ acts by the character $e^{\mu}$. With these isomorphisms, the restriction map $\Res: O(G)\to O(H)$ is easy to describe and corresponds to decomposing the irreducible representation $L$ into its weight spaces (see discussion after theorem \ref{KNV2} for details).

By analogy with the classical case, in the quantum case we define $O_{q}(G)=\bigoplus_{L} {^*L} \otimes L$, with ${^*L}$ being the left dual of $L$ (see Section \ref{3}), and the sum again being over all dominant integral weights $\lambda$, with $L=L_{\lambda}$ the irreducible representation of $U_{q}(\mathfrak{g})$ with highest weight $\lambda$. As all such representations have integral weights, there is again a natural restriction map to $O(H)$.  For $V$ a finite dimensional representation of $U_{q}(\mathfrak{g})$, we then consider the restriction map $$(O_{q}(G)\otimes V)^{U_{q}(\mathfrak{g})}\to O(H)\otimes V.$$ 

Let $E_{i}$ denote the standard generator of $U_{q}(\mathfrak{g})$ associated to $\alpha_i$, and let $q_{i}=q^{d_i}=q^{<\alpha_i,\alpha_i>/2}$. The main result of the paper is:

\begin{restatable*}{theorem}{MAIN}
\label{main}
The map $\Res: (O_{q}(G)\otimes V)^{U_{q}(\mathfrak{g})} \to O(H)\otimes V$ is injective. Its image consists of those functions $f\in O(H)\otimes V$ that satisfy:
 \begin{enumerate}
\item $f\in O(H)\otimes V[0]$;
\item $f$ is invariant under the (unshifted) action of the dynamical Weyl group (see Section \ref{terribledef})
\item for every simple root $\alpha_i$ and every $n\in \mathbb{N}$, the polynomial $E_{i}^n .f $ is divisible by $$(1-q_{i}^2e^{\alpha_i})(1-q_{i}^4e^{\alpha_i})\ldots (1-q_{i}^{2n}e^{\alpha_i}).$$
\end{enumerate}
\end{restatable*}

Obviously, this statement is a direct generalization of the one for $q=1$ case. Checking that restrictions to $O(H)\otimes V$ satisfy properties 1) and 3) is a direct computation; checking 2) requires more tools. The most involved part of the proof is checking that every function in $O(H)\otimes V$ that satisfies $(1)-(3)$ is a restriction of an element of $(O_{q}(G)\otimes V)^{U_{q}(\mathfrak{g})}$. The proof of the analogous statement in \cite{KNV} uses some basic geometric observations which are not available in the quantum case (these observations follow from $O(G)$ being an algebra of polynomial functions on the algebraic variety $G$). This is the reason their proof cannot be directly generalized. 

Instead, the space of invariants can be rewritten in another way, namely as
$$(O_{q}(G)\otimes V)^{U_{q}(\mathfrak{g})}=\bigoplus_{L} ({^*L} \otimes L\otimes V)^{U_{q}(\mathfrak{g})}\cong \bigoplus_{L} \Hom_{U_{q}(\mathfrak{g})}(L,L\otimes V).$$ This natural isomorphism composed with the restriction map above reformulates the problem in terms of traces of intertwining operators $L\to L\otimes V$, as it turns out that $\Phi\in  \Hom_{U_{q}(\mathfrak{g})}(L,L\otimes V)$ maps to the function on $H$ given by $x\mapsto \Tr|_{L}(\Phi\circ x)$. Such functions have been extensively studied in recent years, among others in \cite{EV1}, \cite{EV2}, and satisfy a number of remarkable symmetry properties and difference equations. Reframing the problem in terms of trace functions enables us to draw from those results to prove the above statement.

The roadmap of the paper is as follows. Section 2 describes the results of \cite{KNV} and the reformulations of them that we will generalize. In Section 3,  we give the framework and the statement of  the main Theorem \ref{main}. Section 4 includes some of the definitions and results necessary for the proof, most notably those of the dynamical Weyl group and trace functions. Section 5 contains the proof of the main theorem.

\subsection*{Acknowledgements} I am very grateful to Pavel Etingof for suggesting this problem and for his guidance. I also wish to thank the anonymous referee, whose thorough reading, comments and corrections improved the paper. The author's work was partially supported by the NSF grant DMS-0504847.

\section{The generalized Chevalley restriction theorem in the classical case}

\label{section2}
Through the paper, let $C=(a_{ij})$ be a Cartan matrix of finite type of size $r$, and $(\mathfrak{h}, \mathfrak{h}^*,\Pi, \Pi^{\vee})$ its realization. This means that $\mathfrak{h}$ is an $r$-dimensional vector space over $\mathbb{C}$ with a basis $\Pi^{\vee}=\{ h_{1},\ldots h_{r} \}$, $\mathfrak{h}^{*}$ its dual space with a basis of simple positive roots $\Pi=\{ \alpha_{1},\ldots \alpha_{r} \}$, and $\alpha_{i}(h_j)=a_{ji}$. The matrix $C$ is symmetrizable, so we let $d_i$ be the minimal positive integers that satisfy $d_i a_{ij}=d_j a_{ji}$. Define a symmetric bilinear form on $\mathfrak{h}^*$ by $\left< \alpha_{i} , \alpha_{j}  \right>=d_ia_{ij}$ and on $\mathfrak{h}$ by $\left< h_{i} , h_{j}  \right>=d_i^{-1}a_{ji}$. Both of these forms induce the same isomorphism $\mathfrak{h} \cong \mathfrak{h}^*$ by $\alpha_{i} \leftrightarrow d_i h_i$. Let $H$ be a complex torus of rank $r$, so that the Lie algebra of $H$ is $\mathfrak{h}$, and let $\exp:\mathfrak{h} \to H $ be the exponential map, such that its kernel is $\mathbb{Z}$-spanned by $2\pi i h_{j}$ (in other words, $\exp$ realizes $H$ as a quotient of $\mathfrak{h}$ by the lattice $\mathbb{Z}2\pi i \Pi^{\vee}$). We will write elements of $H$ by $x=\exp(h)=e^{h}, h \in \mathfrak{h}$, and characters on the group accordingly, meaning $e^{\alpha}:H\to \mathbb{C}$ is a character corresponding to $\alpha \in \mathfrak{h}^*$, such that $e^{\alpha}(e^h)= e^{\alpha(h)}$. Let $W$ be the Weyl group associated to this data. Let $P$ be the weight lattice (set of all $\lambda \in \mathfrak{
h}^*$ such that $\lambda(h_{i})\in \mathbb{Z}\, \, \forall i$), and $P_{+}$ the set of dominant integral weights ($\lambda \in P$ such that $\lambda(h_{i})\in \mathbb{N}_{0} \, \, \forall i$).

Let $\mathfrak{g}$ be a semisimple finite dimensional Lie algebra over $\mathbb{C}$ with a Cartan matrix $C$ and Cartan subalgebra $\mathfrak{h}$. Let $G$ be the connected simply connected complex algebraic group with Lie algebra $\mathfrak{g}$, maximal torus $H$, and $\exp : \mathfrak{g} \to G$ the exponential map that restricts to $\exp : \mathfrak{h} \to H$. For each root $\alpha$ let $\mathfrak{g}_{\alpha}$ be the appropriate root space, and for every simple root $\alpha_{i}$ let $E_{i}\in\mathfrak{g}_{\alpha_i},F_{i}\in\mathfrak{g}_{-\alpha_i}$ denote the Chevalley generators of $\mathfrak{g}$; these satisfy $[E_{i},F_{j}]=\delta_{ij}h_i$ and for every $i$ determine a copy of $\mathfrak{sl}_{2}$ in $\mathfrak{g}$.

For every $\lambda \in P$ let $M_{\lambda}$ be the Verma module with highest weight $\lambda$, generated by a distinguished highest weight vector $m_{\lambda}$. For every dominant integral $\lambda \in P_{+}$, the module $M_{\lambda}$ has an irreducible finite dimensional quotient that we call $L_{\lambda}$. Call the image of $m_{\lambda}$ in it $l_{\lambda}$. For any  finite dimensional $\mathfrak{g}$-module $V$ and any $\nu \in P$, set $V[\nu]=\{ v\in V | h.v=\nu (h)v \, \, \forall h \in \mathfrak{h} \}$, the weight space of $V$ of weight $\nu$.

Because $G$ is simply connected, representation theory of $G$ and $\mathfrak{g}$ is the same. In particular, for any finite dimensional $V$ with an action of $G$ and action of $\mathfrak{g}$ derived from it, the set of invariants is the same: $\{v\in V | g.v=v \, \, \forall g \in G \}=V^{G}=V^{\mathfrak{g}}=\{v\in V | X.v=0 \, \, \forall X \in \mathfrak{g} \}$. Because of this, in this section we will be passing from $G$ representations to $\mathfrak{g}$ representations and back without comments.

Consider the set $\mathbb{C}[\mathfrak{g}]$ of all polynomial functions on $\mathfrak{g}$. The group $G$ acts on it by the coadjoint action: for $f\in \mathbb{C}[\mathfrak{g}], X\in \mathfrak{g}, g\in G$, $(gf)(X)=f(\Ad(g^{-1})X)$. Let $V$ be any finite dimensional $G$ and $\mathfrak{g}$ representation; we will write both actions with a dot: $g.v$ and $X.v$. Consider the space $\mathbb{C}[\mathfrak{g}]\otimes V$ of polynomial functions on $\mathfrak{g}$ with values in $V$. Let $G$ act on this space diagonally on both tensor factors. This means that $g\in G$ maps $f\in \mathbb{C}[\mathfrak{g}]\otimes V$ to a polynomial function on $\mathfrak{g}$ given by $X\mapsto g.f(\Ad(g^{-1})X)$. Call the set of invariants with respect to this diagonal action $(\mathbb{C}[\mathfrak{g}]\otimes V)^G$; these are functions $f$ that satisfy $g.f(X)=f(\Ad(g)X)$ for all $g\in G,X\in \mathfrak{g}$.

There is an obvious restriction map $\Res: \mathbb{C}[\mathfrak{g}]\otimes V \to \mathbb{C}[\mathfrak{h}]\otimes V$, and $\Res: (\mathbb{C}[\mathfrak{g}]\otimes V)^{G} \to \mathbb{C}[\mathfrak{h}]\otimes V$. The graded version of the main result of \cite{KNV} (Theorem 2) describes this latter map.
\FIRST

We recall the proof of \cite{KNV}. We first need a technical lemma about algebraic geometry.

\begin{lemma} \label{mala}
Let $X$ be a smooth connected complex algebraic variety, $f$ a rational function on $X$, and  $Z$ is a divisor in $X$ such that $f$ is regular on $X\setminus Z$. Assume that for a generic point $z$ of $Z$ there exists a regular map $c_{z}: \bold{D}\to X$ from a formal disk $\bold{D}$ to $X$, such that 
$c_z(0)=z$, $c_{z}$ does not factor through $Z$ (so the limit $\lim_{t\to 0 } f(c_{z}(t))$ is well defined), and this limit is finite (equivalently, $f(c_{z}(t))\in \mathbb{C}[[t]]$). Then $f$ is regular at a generic point of $Z$, and hence it is a regular function on $X$.
\end{lemma}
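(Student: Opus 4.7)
The plan is to reduce to a local question at the generic point of each irreducible component of $Z$, and then derive a contradiction from the existence of $c_z$ by pulling back $f$.

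First I would invoke the algebraic Hartogs principle: since $X$ is smooth, hence normal, a rational function on $X$ is regular on all of $X$ if and only if it is regular at the generic point of every prime divisor. Because $f$ is already regular on $X\setminus Z$, it is automatically regular at the generic point of every prime divisor not contained in $Z$. So the claim reduces to showing that $f$ is regular at the generic point of each irreducible component $Z_0$ of $Z$; the final sentence of the lemma (\emph{hence it is a regular function on $X$}) is then just the normality statement.

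Fix such a component $Z_0$ with generic point $z$. Smoothness of $X$ makes $\mathcal{O}_{X,z}$ a discrete valuation ring; let $g \in \mathcal{O}_{X,z}$ be a uniformizer, so $g$ is a local equation for $Z_0$ near $z$. The rational function $f$ then has a well-defined valuation $v(f) \in \mathbb{Z}$, and $f$ is regular at $z$ precisely when $v(f) \geq 0$. Suppose for contradiction that $v(f) = -k < 0$; then $f = g^{-k} u$ with $u$ a unit in $\mathcal{O}_{X,z}$. Take $c_z: \mathbf{D} \to X$ as in the hypothesis. The pullback $c_z^* g \in \mathbb{C}[[t]]$ is nonzero (otherwise $c_z$ would factor through $V(g) = Z_0 \subseteq Z$ in the formal neighborhood of $z$) and vanishes at $t = 0$ (because $c_z(0) = z \in V(g)$), so its order $m$ is a positive integer. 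Since $u(z) \neq 0$, the pullback $c_z^* u$ is a unit in $\mathbb{C}[[t]]$ of order $0$. Therefore $c_z^* f = (c_z^* g)^{-k} \cdot c_z^* u$ has order $-mk < 0$ at $t=0$, contradicting the hypothesis that $f(c_z(t)) \in \mathbb{C}[[t]]$. Hence $v(f) \geq 0$ and $f$ is regular at $z$.

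The main content lies in the contradiction above; the only subtlety is justifying the behavior of the pullbacks $c_z^* g$ and $c_z^* u$, which follows at once from the two explicit conditions on $c_z$ (it hits $z$ at $t=0$ but does not lie inside $Z$). No step looks like a genuine obstacle; the proof is essentially the observation that a pole along a divisor must persist under any probe transverse to that divisor, packaged together with normality of smooth varieties, which is the reason smoothness (rather than mere regularity away from $Z$) is needed in the hypothesis.
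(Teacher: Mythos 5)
Your proof is correct and is essentially the paper's argument in valuation-theoretic dress: the paper also reduces to the generic point of an irreducible component (with local equation $Q$), and its descent on the minimal $m$ with $fQ^m$ regular is exactly the computation that the order of $f$ along that component cannot be negative, detected by pulling back along $c_z$. The only cosmetic difference is that you phrase the pole order via the DVR at the scheme-theoretic generic point and then evaluate at a closed point where $c_z$ exists, whereas the paper works with limits $\lim_{t\to 0}P(c_z(t))$ at generic closed points throughout.
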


\begin{proof}
The singular set of the rational function $f$ is a finite union of irreducible divisors, and it is by assumption contained in the divisor $Z$. So, it is enough to show that $f$ is regular at a generic point of $Z$ to see that $f$ is regular on $X$. 

This is a local problem. By localizing to an open subset of $X$, we may assume without loss of generality that $X$ is affine and $Z$ is irreducible. One may also assume that $Z$ is given by a polynomial equation $\{ Q=0\}$, for some regular function $Q$ on $X$ such that $dQ\ne 0$ for a generic point of $Z$.


Since $f$ is a rational function, there exists the smallest integer $m\ge 0$ such that the function $P=fQ^{m}$ is regular at a generic point of $Z$.

For a generic point $z\in Z$, using that $\lim_{t\to 0}f(c_{z}(t))$ is finite, we get  $$P(z)=\lim_{t\to 0}P(c_{z}(t))=\lim_{t\to 0}f(c_{z}(t))\cdot Q^m(c_{z}(t))=$$
$$=\lim_{t\to 0}f(c_{z}(t))\cdot \lim_{t\to 0}Q^m(c_{z}(t))=\lim_{t\to 0}f(c_{z}(t))\cdot 0^m.$$

If $m>0$, then this implies that $P(z)=0$ for a generic point $z$, so $P/Q$ is regular on $Z$ and we can replace $P$ by $P/Q$ and $m$ by $m-1\ge 0$, contrary to our choice of $m$ as minimal. So, $m=0$ and $f=P$ is regular at a generic point of $Z$.

\end{proof}

\begin{remark}
Note that the existence of $c_{z}$ for only one specific point $z$ does not guarantee that $f$ is regular at it. To prove that $f$ is regular at one point $z$, one would need to show that the limit is finite when approaching $z$ from any direction, not just along $c_{z}$. However, the assumption of the lemma is that a function $c_{z}$ exists for many points of $Z$ at once. In that case, as we showed, $f$ is regular at all points of $Z$, and hence the limit of $f$ is indeed finite when approaching any point of $Z$ from any direction. 

\end{remark}

\begin{remark}
We will first apply this lemma in the proof of theorem \ref{KNV1} for $X=\mathfrak{g}$, where $c_{z}(t)$ can be chosen to be linear functions, and then in the proof of \ref{KNV2} for $X=G$ an algebraic group, where $c_{z}(t)$ can be chosen to be multiplication by an appropriate element of $G$.
\end{remark}

We now recall the proof of Theorem \ref{KNV1}.

\begin{proof} 

Let us first show that the conditions 1)-3) are necessary. Let $f\in (\mathbb{C}[\mathfrak{g}]\otimes V)^{G}$, and let us abuse notation and write $f$ for $\Res f$.

\emph{ 1) is necessary:} For any $x\in H$, $h\in \mathfrak{h}$, we have 
$$x.f(h)=f(\Ad(x)h)=f(h).$$
From this is follows that $f(h)\in V[0].$

\emph{ 2) is necessary:} For $N_{G}(H)$ the normalizer  of $H$ in $G$ and $Z_{G}(H)$ the centralizer of $H$ in $G$, $Z_{G}(H)=H$, we have $W=N_G(H)/Z_G(H)=N_G(H)/H$. The space $V[0]$ is the $e^{0}=1$-eigenspace of $H$, so $N_{G}(H)$ preserves it and $H$ fixes it pointwise; therefore $W$ acts on it. Because $N_{G}(H)\subseteq G$, the functions we get are $W$- equivariant, meaning: $$f(wh)=w.f(h).$$

\emph{ 3) is necessary:} For $f$ a polynomial function on $\mathfrak{g}$ and $X,Y\in \mathfrak{g}$, let $$\frac{\partial f}{\partial X}(Y)=\frac{\mathrm d}{\mathrm d t} f(Y+tX) \vert _{t=0}   $$ be the directional derivative. We have the usual Taylor series expansion for the function of one complex variable $t\mapsto f(Y+tX)$ given by $$f(Y+tX)=\sum_{n\ge 0} \frac{1}{n!}t^n \frac{\partial ^n f}{\partial X^n}(Y).$$

Let us write down the invariance condition of $f$ with respect to $\mathrm{exp}(tE_{i})\in G$. For $h\in \mathfrak{h}$, and $t\in \mathbb{C}$, we have:
$$ \mathrm{exp}(tE_{i}).f (h)=f ( \mathrm{Ad}(\mathrm{exp}(tE_{i})) h) $$ 
Expand both sides into a power series in $t$ to get
\begin{eqnarray*} \sum_{n\ge 0} \frac{1}{n!}t^n E_{i}^n.f (h) & = & f ( \mathrm{exp}(\mathrm{ad}(tE_{i})) h) \\
& = &f ( h- t \alpha_{i}(h)E_{i}) \\
& = & \sum_{n\ge 0} \frac{1}{n!}(-1)^n t^n\alpha_i(h)^n \frac{\partial ^n  f }{\partial E_{i}^n}(h) \\
\end{eqnarray*}
Looking at the corresponding terms in the power series, we get $$E_{i}^n.f(h) =(-1)^n\alpha_i(h)^n \frac{\partial ^n  f }{\partial E_{i}^n}(h),$$ which is divisible by $\alpha_{i}^n$.

\emph{The map $\Res$ is injective:} The set of elements in $\mathfrak{g}$ that are $\Ad(G)$-conjugate to an element of $\mathfrak{h}$ is dense in $\mathfrak{g}$. So, if two $G$-equivariant polynomial functions on $\mathfrak{g}$ match on $\mathfrak{h}$, they match on its dense $G$-orbit, so they are the same.

\emph{ 1)-3) are sufficient:} So far we have seen that the image of $\Res$ is contained in the set of all functions satisfying $1)-3)$. To see that all functions satisfying 1)-3) are restrictions of equivariant functions on $\mathfrak{g}$, let $f$ be a polynomial function on $\mathfrak{h}$ satisfying 1)-3) and let us try extending it to $\mathfrak{g}$. 

Call elements of $\mathfrak{h}$ that are not fixed by any nontrivial element of the Weyl group \emph{regular}, and call the set of all such elements $\mathfrak{h}_{reg}$. It is a complement of finitely many hyperplanes of the form $\Ker \alpha$ in $\mathfrak{h}$, for $\alpha$ a root. Call elements of $\mathfrak{g}$ that are $\Ad(G)$-conjugate to an element of $\mathfrak{h}_{reg}$ \emph{regular semisimple}, and the set of all such elements $\mathfrak{g}_{rs}$. This is an algebraic variety, open and dense in $\mathfrak{g}$. More precisely, there exists a polynomial in $ \mathbb{C}[\mathfrak{g}]^{\mathfrak{g}}$, called the discriminant, such that the set  $\mathfrak{g}_{rs}$ is the complement of its zero set. The restriction of this polynomial to $\mathfrak{h}$ is the product of all roots of $\mathfrak{g}$.

We can extend the function $f$ to elements of $\mathfrak{g}_{rs}$ by  defining $f(\Ad(g)h)=g.f(h)$. More precisely, consider the diagram 
$$ \begin{CD}
G\times \mathfrak{h}_{reg} @>\overline{f}>> V  \\
@VV a V @AAA \\
\mathfrak{g}_{rs} @>i>> \mathfrak{g} \\
\end{CD}
$$
Here, $a(g,h)=\Ad(g)h$, and $i$ is the inclusion. The map $a$ is surjective and $\mathfrak{g}_{rs}$ is dense in $\mathfrak{g}$, so $i\circ a$ is dominant. It is compatible with the map $\overline{f}(g,h)=g.f(h)$. Indeed, if $i(a(g_{1}, h_{1}))=i(a(g_{2}, h_{2}))$, then $h_{1}=\Ad (g_{1}^{-1}g_{2})h_{2}$, so $g_{1}^{-1}g_{2}$ is in the normalizer of $\mathfrak{h}$ in $G$, and therefore a representative of an element of $W$. Using that $f:\mathfrak{h}\to V$  is $W$-invariant, we get  $$\overline{f}(g_{1}, h_{1})=g_{1}.f(h_{1})=g_{1}.f(\Ad(g_{1}^{-1}g_{2})h_{2})=g_{1}g_{1}^{-1}g_{2}.f(h_{2})=\overline{f}(g_{2}, h_{2}).$$
Therefore, there is a well defined rational function $\mathfrak{g}\to V$ that makes the above diagram commute. We claim this is the required extension of $f:\mathfrak{h}\to V$ to a $G$-invariant function on $\mathfrak{g}$.

The restriction of this function to $\mathfrak{h}$ is $f$, so we abuse notation and call this rational function on $\mathfrak{g}$ by the same name $f$. By construction, it is $G$-invariant on $\mathfrak{g}_{rs}$ which is dense in $\mathfrak{g}$, so it is $G$-invariant on the maximal domain in $\mathfrak{g}$ where it is regular. To prove this is the required function in $(\mathbb{C}[\mathfrak{g}]\otimes V)^{G}$, we just need to show that it is a regular function on $\mathfrak{g}$.



As it is regular on $\mathfrak{g}_{rs}$, we need to show it is regular on the divisor $D=\mathfrak{g}\setminus \mathfrak{g_{rs}}$, and for that we will use lemma \ref{mala}. The assumptions of this lemma refer to a generic point of the divisor $D$.  The set of elements whose semisimple part is conjugate to an element of $\mathfrak{h}$ which is contained in only one hyperplane of the form $\Ker \alpha$ is Zariski dense in  $D$. More precisely, the irreducible components of  $D$ are $$D_{\alpha}=\{ \textrm{elements of } \mathfrak{g} \textrm{ whose semisimple part is conjugate to an element of } \Ker \alpha \},$$ for $\alpha$ a representative of a $W$-conjugacy class of roots. Therefore we choose all representatives $\alpha$ to be simple roots. Then the set $D_{\alpha_{i}}$ is equal to
 $$D_{\alpha_{i}}=\{ \textrm{elements whose semisimple part is conjugate to an element of } \Ker \alpha_{i}\},$$
 and contains a Zariski dense subset $$D_{\alpha_{i}}'=\Ad (G)\{ h+E_{i} | h \in \Ker \alpha_{i}, h\notin \Ker\beta \,\, \forall \textrm{ root } \beta \ne \pm \alpha_{i} \}.$$



We will check the assumptions of Lemma \ref{mala} on any element $z=\Ad(g)(h+E_{i})$ the set $D_{\alpha_{i}}'$ for any $\alpha_{i}$, $g\in G, h\in \Ker\alpha_{i}$. We construct a function $c_{z}$, such that $\lim_{t\to 0}f(c_{z}(t))$ is finite. Pick $y \in \mathfrak{h}$ such that $\alpha_i(y)=1$ and define $c_{z}(t)=\Ad(g)(h+ty+E_{i})$. Clearly $c_{z}(0)=z$. The element $h+ty+E_{i}$ is conjugate, via $\mathrm{exp}(t^{-1}E_{i})$, to $$\Ad(\mathrm{exp}(t^{-1}E_{i})) (h+ty+E_{i})=h+ty,$$ which is in $\mathfrak{h}_{reg}$ for small $t\ne 0$, so $f$ is well defined there and $c_{z}$ doesn't factor through the divisor $D$. Now calculate the limit, using that $f$ is $G-$invariant:
\begin{eqnarray*}
\lim_{t\to 0} f (c_{z}(t)) &=& \lim_{t\to 0} f (\Ad(g)(h+ty+E_{i})) \\
&=& \lim_{t\to 0} g.f (h+ty+E_{i}) \\
 & =&  \lim_{t\to 0} g.f (\Ad(\mathrm{exp}(-t^{-1}E_{i})) (h+ty)) \\
& = &  \lim_{t\to 0} g. \mathrm{exp}(-t^{-1}E_{i}). f  (h+ty) \\
& = & g. \lim_{t\to 0}  \sum_{n\ge 0}\frac{1}{n!} (-t)^{-n} E_{i}^n. f  (h+ty). \\
\end{eqnarray*}
This sum is finite because $f$ takes values in $V$, a finite dimensional representation on which $E_{i}$ is nilpotent. $h+ty$ is in $\mathfrak{h}$, and by 3) every term $E_{i}^n. f  (h+ty)$ is divisible by $\alpha_i(h+ty)^n=t^n$. So we can exchange limit and sum, and all of the summands are finite when we let $t\to 0$. 

Using lemma \ref{mala}, we conclude that $f$ is regular on $\mathfrak{h}$, as required. 

\end{proof}

As mentioned in the introduction, this theorem can be restated terms of polynomial functions on the group $G$, with an almost identical proof. Let $O(G)$ be the algebra of polynomial functions on the algebraic group $G$, and $O(H)$ the algebra of polynomial functions on the subgroup $H$. There is again the obvious restriction map (quotient of algebras) that we will call $\Res:O(G)\to O(H)$. Let $\Res$ also denote the tensor product of this map with the identity map on a representation, $\Res:O(G)\otimes V \to O(H)\otimes V$. There is also a natural $G$-action on this tensor product, by acting on the first tensor factor by dual of conjugation in the group, and on the second by a given action on $V$. The invariants are then functions that satisfy: $$g.f(x)=f(gxg^{-1})\, \, \, \forall g,x\in G,$$ and the analogous theorem is:
\SECOND
\begin{proof}
Essentially, this proof is the same as proof of Theorem \ref{KNV1}. Necessity of conditions 1) and 2) follows directly. To check condition 3), calculate for $h\in \mathfrak{h}, t\in \mathbb{C}$:
\begin{eqnarray*}  
\exp(-h) \exp (t E_{i}) \exp(h) & = & \exp (Ad(\exp(-h)) tE_{i} )  \\  
 & = & \exp (\exp (ad(-h)) tE_{i}) \\
& =  & \exp ( e^{-\alpha_i(h)} tE_{i} )
\end{eqnarray*}  

It follows that $$  \exp(tE_{i} ) \exp(h) \exp(-tE_{i} ) = \exp(h)\exp(( e^{-\alpha(h)}-1)t E_{i} )  .$$ Now
\begin{eqnarray*}  
 \sum_{n\ge 0}  \frac{1}{n!} t^n  E_{i}^n .f(\exp h) &=&\exp(tE_{i}).f(\exp(h)) \\
  &=&f( \exp(tE_{i} ) \exp(h) \exp(-tE_{i} ) ) \\
  &=&f(  \exp(h)\exp( (e^{-\alpha_i(h)}-1)t E_{i} ) ) \\
  &=&  \sum_{n\ge 0}  \frac{1}{n!} (e^{-\alpha_i(h)}-1)^n t^n R_{E_{i}}^nf(\exp(h))\\
  &=&  \sum_{n\ge 0}  \frac{1}{n!} e^{-n\alpha_i(h)}(1-e^{\alpha_i(h)})^n t^n R_{E_{i}}^nf(\exp(h))
\end{eqnarray*}  
Here $R_{E_{i}}f$ denotes the derivative of $f$ with respect to left invariant vector field $E_i$. It is a polynomial function on $G$. It follows that  $E_{i}^n .f(\exp h) = e^{-n\alpha_i(h)}(1- e^{\alpha_i(h)})^n R_{E_{i}}^nf(\exp(h))$, and it is divisible by $ (1-e^{\alpha_{i}(h)})^n$.

If the function $f\in  O(H)\otimes V$ that satisfies 1)-3) can be extended to a $G$-equivariant function on $G$, this can be done  in a unique way, because the set of elements conjugate to an element of $H$ is dense in $G$. 

To see it always extends, just as in Theorem \ref{KNV1}, we first extend it to the set of regular semisimple elements of $G$, and then use lemma \ref{mala}.

Every element of $g\in G$ has a decomposition $g=g_{s}g_{u}$, where $g_{s}$ is semisimple, $g_{u}$ is unipotent and $g_{s}g_{u}=g_{u}g_{s}$. Every semisimple element is contained in some maximal torus. All maximal tori are conjugate. A semisimple element of $G$ is called \textit{regular} if  there is only one such torus containing $g_{s}$, equal to the centralizer $Z_{G}(g_{s})$. 
The set of regular elements $G_{rs}$ of $G$ is open dense in $G$. (See \cite{Borel}).

We can extend $f:H\to V$ to a $G$-invariant polynomial function on the set of all regular semisimple elements of $G$, by using that such an element is conjugate to an element of the fixed torus $H$, and that two elements of $H$ are $G$-conjugate if and only if they are $W$-conjugate. Because the set of regular semisimple elements is open dense in $G$, we can consider $f$ to be a rational function $G\to V$, regular except maybe on the set $G\setminus G_{rs}$. We will use lemma \ref{mala} to show that it is in fact regular everywhere. 

We have $G\setminus G_{rs}=\cup_{\alpha,m} D_{\alpha,m},$ where  $\alpha$ is an arbitrary root and $m$ an arbitrary integer, and
$$D_{\alpha, m}=\{\textrm{elements whose semisimple part is conjugate to} $$ $$\textrm{some} \exp(h)\in H, h\in \mathfrak{h}, \textrm{ such that } \alpha(h)=2\pi i m\}.$$  Two such sets coincide if they are labeled by $W$- conjugate roots and integers of the same parity.  For the purposes of applying lemma \ref{mala}, we will use the following dense subset of $D_{\alpha_{i},m}$:
$$D_{\alpha_{i}, m}'=\{G-\textrm{conjugates of}  \exp(h)\exp(E_{i})\in H,$$ $$h\in \mathfrak{h} \textrm{ such that } \alpha_{i}(h)=2\pi i m, \textrm{ and } e^{\beta (h)}\ne 1 \,\, \forall \textrm{ root } \beta \ne \pm \alpha_{i} \}.$$


Let $z=g\cdot \exp(h)\exp(E_{i})\cdot g^{-1}$ be an arbitrary element of $D_{\alpha_{i}, m}'$. Pick $y \in \mathfrak{h}$ with $\alpha_{i}(y)=1$. Then $$[y,E_{i}]=E_{i} \,\,\,\,\ [h,E_{i}]=2i\pi mE_{i},$$ so for $t\in \mathbb{C}$ $$ \exp(ty)E_{i}=e^{t}E_{i}\exp(ty)  \,\,\,\, \exp(h)E_{i}=E_{i}\exp(h).$$ From this it follows
$$\exp(\frac{-1}{e^{-t}-1}E_{i}) \exp(h+ty)\exp(E_{i})\exp(\frac{1}{e^{-t}-1}E_{i})= \exp(h+ty),$$ in other words $\exp(h+ty)\exp(E_{i})$ is conjugate to a regular element of $H$. We define $$c_{z}(t)=g\cdot  \exp(h+ty)\exp(E_{i}) \cdot g^{-1}$$
and calculate, as before:
\begin{eqnarray*}  
\lim_{t\to 0} f(c_{z}(t)) & =& \lim_{t\to 0}  f(g\cdot \exp(h+ty)\exp(E_{i})\cdot g^{-1})\\
&=&  \lim_{t\to 0}  g.f(\exp(h+ty)\exp(E_{i}))\\
&=&  \lim_{t\to 0}  g.f(\exp(\frac{1}{e^{-t}-1}E_{i})\exp(h+ty)\exp(\frac{-1}{e^{-t}-1}E_{i}))\\
&=&  \lim_{t\to 0}  g. \exp(\frac{1}{e^{-t}-1}E_{i}).f(\exp(h+ty))\\
& =& g. \lim_{t\to 0} \sum_{n\ge 0} \frac{1}{n!}\frac{1}{(e^{-t}-1)^{n}} E_{i}^{n}.f(\exp(h+ty)).\\
\end{eqnarray*}  
This sum is finite, and every $E_{i}^n.f(\exp(h+ty))$ is by assumption 3) divisible by $$(1-e^{\alpha_i})^n(\exp(h+ty))=(1-e^{\alpha_i(h+ty)})^n=(1-e^{2i\pi m+t})^n=(1-e^{t})^n.$$ Since $\lim_{t\to 0} (\frac{1-e^{t}}{e^{-t}-1})^n=\lim_{t\to 0} e^{nt}=1$, we see that  the limit of every summand is finite. So, $\lim_{t\to 0} f(c_{z}(t))$ is finite, and by lemma \ref{mala}, $f$ is regular at the generic point of $G\setminus G_{rs}$, and hence it is regular everywhere.
\end{proof}

The main reason for reformulating Theorem \ref{KNV1} in terms of Theorem \ref{KNV2} is that the latter allows generalization to quantum groups. Namely, use the Peter-Weyl theorem to write $$O(G)\cong \bigoplus_{L\in \widehat{G}} L^*\otimes L.$$ Here the sum is over $\widehat{G}$, the set of irreducible finite dimensional representations $L$ of $G$; equivalently, it is over all dominant integral weights $\mu \in P_{+}$, with $L=L_{\mu}$. The module $L^*$ is the dual space of $L$, with the natural $G$ action $g\varphi=\varphi\circ g^{-1}$. The isomorphism $A:\bigoplus_{L\in \widehat{G}} L^*\otimes L\to O(G)$ is determined by sending $\varphi\otimes l \in L^*\otimes L$ to a function on $G$ given by $x\mapsto A(\varphi\otimes l)(x)=\varphi(xl)$. It is a matrix coefficient of $L$, and therefore a polynomial function on $G$. If we put the natural action of $G$ on every tensor product $L^*\otimes L$, meaning letting $g\in G$ act by $g\otimes g$, then $A$ is an isomorphism of $G$ representations: $(Ag(\varphi\otimes l))(x)=(A(\varphi\circ g^{-1})\otimes (gl))(x)= \varphi(g^{-1}xgl) =A(\varphi\otimes l)(gxg^{-1})$. 

The action we had on $O(G)\otimes V$ was the natural action on the tensor product, so $A\otimes id_{V}:\bigoplus_{L\in \widehat{G}} L^*\otimes L\otimes V  \to O(G)\otimes V$ is also an  isomorphism of representations. There is also a natural isomorphism $B:L^*\otimes L\otimes V \to \Hom_{\mathbb{C}}(L,L\otimes V)$, by $B(\varphi\otimes l\otimes v)(l')=\varphi(l')l\otimes v, l'\in L$. The map $B$ is a $G$-isomorphism with respect to the following $G$-action on $\Hom_{\mathbb{C}}(L,L\otimes V)$: for $\Phi\in \Hom_{\mathbb{C}}(L,L\otimes V), \, l\in L, \, g\in G,\, (g\Phi)(l)=(g\otimes g).(\Phi(g^{-1}l)) $. Notice that the invariants with respect to this action are exactly the $G$-intertwining operators $L\to L\otimes V$. It is also interesting to note that the composite map $(A\otimes id)\circ B^{-1}: \Hom_{\mathbb{C}}(L,L\otimes V)\to O(G)\otimes V$ is the trace map; more precisely, for a basis $l_i$ of $L$, dual basis $\varphi_i$ of $L^*$, and $\Phi \in \Hom_{\mathbb{C}}(L,L\otimes V)$, its image $(A\otimes id) (B(\Phi))$ is a polynomial on $G$ given by  $$x\mapsto \mathrm{Tr}|_{L}(\Phi\circ x)= \sum_i (\varphi_i\otimes id) \Phi(xl_i)\in V.$$

To summarize, we have the following diagram: 
$$ \begin{CD}
O(G)\otimes V @<A\otimes id<< \bigoplus_{\mu \in P_{+}} L_{\mu}^*\otimes L_{\mu}\otimes V  @>B>> \bigoplus_{\mu \in P_{+}}  \Hom_{\mathbb{C}}(L_{\mu},L_{\mu} \otimes V)\\
@VV\Res V @VV\Res V @VV\Res V\\
O(H)\otimes V @<A\otimes id<< \bigoplus_{\nu \in P} \mathbb{C}_{\nu}^*\otimes \mathbb{C}_{\nu}\otimes V  @>B>> \bigoplus_{\nu \in P}  \Hom_{\mathbb{C}}(\mathbb{C}_\nu,\mathbb{C}_{\nu} \otimes V)\\
\end{CD} 
$$
where $\mathbb{C}_{\nu}$ denotes a one dimensional representation of $H$ on which $H$ acts by a character $\nu$. The isomorphisms $A$ and $B$ for $H$ are completely analogous to those for $G$. All $\Res$ maps are naturally defined. $\Res: O(G)\to O(H)$ is restricting a polynomial map to a subvariety. $\Res: \bigoplus_{\mu \in P_{+}} L_{\mu}^*\otimes L_{\mu}\to \bigoplus_{\nu \in P} \mathbb{C}_{\nu}^*\otimes \mathbb{C}_{\nu}$ corresponds to decomposing representations $L_{\mu}$ and $L_{\mu}^*$ into their $H$-isotypic components $L_{\mu,\nu}$ and $L_{\mu,\nu}^*$, then annihilating all parts  that are not diagonal, i.e. parts of the form $L_{\mu,\nu}^*\otimes L_{\mu,\eta}, \nu \ne \eta$, and finally taking a trace $L_{\mu,\nu}^*\otimes L_{\mu,\nu}\to \mathbb{C}_{\nu}^*\otimes \mathbb{C}_{\nu}$. In other words, for $ \varphi_{\nu}\otimes v_{\nu}\in \mathbb{C}_{\nu}^*\otimes \mathbb{C}_{\nu}$ a fixed basis with $\varphi_{\nu}(v_{\nu})=1$, and for $\varphi \in L_{\mu,\nu}^*$, $v\in L_{\mu,\nu}$, $v'\in L_{\mu,\nu'}$ with $\nu'\ne \nu$, the map is $\varphi \otimes v\mapsto \varphi(v)\varphi_{\nu}\otimes v_{\nu}$, and $\varphi \otimes v'\mapsto 0$.

This $\Res$ is defined to make the left square in the diagram commute. Analogously, the rightmost $\Res$ corresponds to viewing the homomorphisms as maps of $H$-representations, decomposing and  forgetting the non diagonal parts. The right square in the diagram also commutes. 

Theorem \ref{KNV2} can now be restated as follows:
\begin{corollary} For every $\mu \in P_{+}$, let $(\Phi_{\mu,j})_{j}$ be a basis of the space of intertwining operators $\Hom_{G}(L_{\mu},L_{\mu} \otimes V)$. For every such operator $\Phi_{\mu,j}$ define its trace function to be $\Psi_{\mu,j} \in O(H)\otimes V$, given by $\Psi_{\mu,j}(x)=\Tr| _{L_{\mu}}(\Phi_{\mu,j} \circ x)$. Then the set of all $\Psi_{\mu,j}$ is a basis of the space of functions in $O(H)\otimes V$ that satisfy 1)-3) from the statement of Theorem \ref{KNV2}.
\label{KNV3}
\end{corollary}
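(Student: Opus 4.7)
The plan is to realize this statement as a direct repackaging of Theorem \ref{KNV2} via the commutative diagram set up immediately before. First, I would observe that the composite $(A \otimes \mathrm{id}) \circ B^{-1}$ is a $G$-equivariant isomorphism $\bigoplus_{\mu \in P_+} \Hom_\mathbb{C}(L_\mu, L_\mu \otimes V) \to O(G) \otimes V$ which, as already noted in the discussion of the diagram, sends an operator $\Phi$ to the trace function $x \mapsto \Tr|_{L}(\Phi \circ x)$ on $G$. Taking $G$-invariants on both sides yields a linear isomorphism
\[
\bigoplus_{\mu \in P_+} \Hom_G(L_\mu, L_\mu \otimes V) \xrightarrow{\;\sim\;} (O(G) \otimes V)^G,
\]
so any choice of bases $(\Phi_{\mu,j})_j$ corresponds under this isomorphism to a basis of $(O(G) \otimes V)^G$ consisting of the $G$-level trace functions $x \mapsto \Tr|_{L_\mu}(\Phi_{\mu,j} \circ x)$.

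Next, I would invoke Theorem \ref{KNV2}, which says that $\Res \colon (O(G) \otimes V)^G \to O(H) \otimes V$ is injective with image exactly the subspace $\mathcal{I}$ of those $f$ satisfying conditions 1)--3). Since $\Res$ is then a bijection onto $\mathcal{I}$, it carries the basis produced above to a basis of $\mathcal{I}$. The restriction of $x \mapsto \Tr|_{L_\mu}(\Phi_{\mu,j} \circ x)$ from $G$ to $H$ is by definition $\Psi_{\mu,j}$ (equivalently, this matches via commutativity of the rightmost square of the diagram, where the $\Res$ on the Hom-side is the composition with the projection onto weight spaces). Hence the set $\{\Psi_{\mu,j}\}_{\mu \in P_+,\, j}$ is a basis of $\mathcal{I}$.

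Since every ingredient is already in place, there is really no obstacle to overcome here: once Theorem \ref{KNV2} is granted, the corollary is a formal consequence of the Peter-Weyl decomposition and the identifications $A$, $B$. The only point worth checking carefully is that the restriction on the Hom-side of the diagram indeed matches the ordinary restriction of polynomial functions, but this is the content of the commutativity of the right square, which was built into the definition of the rightmost $\Res$.
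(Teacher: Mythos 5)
Your proposal is correct and follows essentially the same route the paper intends: the corollary is presented there as an immediate restatement of Theorem \ref{KNV2} via the commutative diagram, using that $(A\otimes \mathrm{id})\circ B^{-1}$ identifies $\bigoplus_{\mu}\Hom_{G}(L_{\mu},L_{\mu}\otimes V)$ with $(O(G)\otimes V)^{G}$ and sends an intertwiner to its trace function, after which injectivity of $\Res$ and the description of its image do the rest. No gaps.
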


This is the form of the theorem that we will prove in the quantum case. Now let us illustrate this form of the theorem with a simple example where we can write everything explicitly. 

\begin{example}\label{primjer}
Let $\mathfrak{g}=\mathfrak{sl}_2$, $G=SL_{2}$. The rank of $\mathfrak{g}$ is $1$, so identify $\mathfrak{h}^*$ with $\mathbb{C}$ by $z \mapsto z\frac{\alpha}{2}$ for $\alpha$ the positive root. Then the dominant weights are identified with nonnegative integers. Let $V=L_{2}$ be the three dimensional irreducible representation with highest weight $2$. Pick a basis $v_{-2},v_{0},v_{2}$ of weight vectors for it, so that $v_{i}\in V[i]$, and $F.v_{2}=v_{0},F^2.v_{2}=v_{-2}$. Pick an analogous basis for any $L_{\mu}$, by $F^i.l_\mu, i=0,\ldots \mu$, and a dual basis to it $\varphi_i$, $\varphi_i(F^j.l_\mu)=\delta_{ij}$.

Let $\mu\in\mathbb{N}_{0}$ be arbitrary. Let us first describe all intertwining operators $$\Phi\in \Hom_{SL_2}(L_{\mu},L_{\mu}\otimes V)=\Hom_{\mathfrak{sl}_2}(L_{\mu},L_{\mu}\otimes V).$$ The map $\Phi$ is determined by $\Phi (l_{\mu})$, which needs to be a singular vector in $L_{\mu}\otimes V$ of total weight $\mu$. So,
$$\Phi (l_{\mu})=c_{0}l_{\mu}\otimes v_{0}+c_{1}F.l_{\mu}\otimes v_{2}.$$
The condition that this needs to be a singular vector in $L_{\mu}\otimes V$ gives a recursion on the coefficients $c_{i}$. In general (for any $\mathfrak{g}$ and any $V$), if $c_{0}=0$ then $\Phi=0$ (see \cite{EV1}, or Lemma \ref{iso} below). 
Scaling so that $c_{0}=1$ in this example we get
$$\Phi (l_{\mu})=l_{\mu}\otimes v_{0}-\frac{2}{\mu}F.l_{\mu}\otimes v_{2}.$$ The dimension of the space of $\mathfrak{g}$-intertwiners $L_{\mu}\to L_{\mu}\otimes V$ is $1$, except when $\mu=0$, when it is $0$. This also illustrates the general case, when for generic $\mu$ the spaces $\Hom_{\mathfrak{g}}(L_{\mu},L_{\mu}\otimes V)$ and $V[0]$ are isomorphic. The isomorphism sends $v\in V[0]$ to the $\mathfrak{g}$-intertwining operator $\Phi$ determined by  $\Phi (l_{\mu})=l_{\mu}\otimes v+\textrm{terms with first factor of lower weight}$ (again, see \cite{EV1} or Lemma \ref{iso} below). 

Set $h=h_1$. Any element of $\mathfrak{h}$ is of the form $zh, z\in \mathbb{C}$, and $\alpha(h)=2$. The trace function $\Psi$ on $H=\exp{\mathfrak{h}}$ is then, for $z\in \mathbb{C}$
\begin{eqnarray*}
\Psi(e^{zh}) & = & \Tr _{L_{\mu}}(\Phi\circ e^{zh})=\sum_{i=0}^{\mu} (\varphi_{i}\otimes id) (\Phi (e^{zh} (F^i. l_{\mu})) \\
& = & \sum_{i=0}^{\mu} e^{(\mu-i\alpha)(zh)} (\varphi_{i}\otimes id) (F^i. (l_{\mu}\otimes v_{0}-\frac{2}{\mu}F.l_{\mu}\otimes v_{2}))  \\
& = & \sum_{i=0}^{\mu} e^{(\mu-2i)z} (1-\frac{2}{\mu}i) v_{0}\\
& = &  \sum_{\mu-2i\ge0} \frac{\mu-2i}{\mu} (e^{z\cdot(\mu-2i)}- e^{-z\cdot(\mu-2i)}) v_{0}\\
\end{eqnarray*}

In this notation, $O(H)=\mathbb{C}[e^{z},e^{-z}]=span\{ e^{zh}\mapsto e^{nz}, n\in \mathbb{Z}\}$. As we vary $\mu\in \mathbb{N}$ and allow linear combinations of such trace functions, we can obviously get all the functions in $O(H)\otimes V$ of the form $f(e^{zh})=\sum_{n}a_{n}e^{nz}v_{0}$ that satisfy $a_{n}=-a_{-n} \, \forall n\in \mathbb{N}_{0}$. On the other hand, a function $f(e^{zh})=\sum_{n,i}a_{n,i}e^{nz}v_{i}$ in $O(H)\otimes V$ that satisfies 1)-3) must have:
\begin{enumerate}
\item $a_{n,i}=0$ unless $i=0$, so $f(e^{zh})=\sum_{n}a_{n}e^{nz}v_{0}$;
\item the Weyl group invariance: the Weyl group of $\mathfrak{sl}_2$ is $\mathbb{Z}_{2}$, and the nontrivial element acts on the $0$ weight subspace of $L_{2m}$ by $(-1)^m$; so in our case $f(e^{-zh})=-f(e^{zh})$, which means $a_{n}=-a_{-n}$;
\item $E.f(e^zh)=\sum_{n}a_{n}e^{nz}E.v_{0}=\sum_{n>0} a_n(e^{nz}-e^{-nz})\cdot 2 v_{2}$; every term $e^{nz}-e^{-nz}$ is divisible by $(1-e^{\alpha})(e^{zh})=1-e^{2z}$ in the ring $O(H)$ (and this condition is trivial in this case; see remark \ref{last}). 
\end{enumerate}
Here we can directly see these are the same spaces of functions, as claimed by the theorem.
\end{example}

\label{q=1}

\section{The generalized Chevalley isomorphism in the quantum case}
\label{3}

We keep the notation from Section \ref{q=1}:  $C=(a_{ij})$ is a Cartan matrix of finite type, $(\mathfrak{h}, \mathfrak{h}^*, \Pi=\{ \alpha_1,\ldots \alpha_{r} \}, \Pi^{\vee}=\{ h_{1},\ldots h_{r}\})$ is its realization, $d_{i}$ the symmetrizing integers, $\left<\cdot,\cdot \right>$  the form identifying $\mathfrak{h}$ and $\mathfrak{h}^*$, $H$ the complex torus with the map $\exp:\mathfrak{h}\to H$ whose kernel is $2i\pi$ times the dual weight lattice, $W$ the Weyl group, $P$ the weight lattice and $P_{+}$ the set of dominant integral weights. 

Let $q\in \mathbb{C}^{\times}$ not a root of unity. Pick $t\in \mathbb{C}$ such that $e^{t}=q$. For $x\in \mathbb{C}$, define $q^{x}=e^{tx}$. For $h\in \mathfrak{h}$, define $q^{h}=e^{th}\in H$, and for $\lambda \in \mathfrak{h}^*$ use the identification $\mathfrak{h}^*\cong \mathfrak{h}$ to define $q^{\lambda}=e^{t\lambda}\in H$. For a function $e^{\nu} \in O(H), \nu \in P$, we now have
 $$e^{\nu}(q^{h})=e^{\nu}(e^{th})=e^{t\nu(h)}=q^{\nu(h)},$$
 $$e^{\nu}(q^{\lambda})=e^{\nu}(e^{t\lambda})=e^{t\left<\nu,\lambda \right>}=q^{\left<\nu,\lambda \right>}.$$

To this data one may associate a quantum group $U_{q}(\mathfrak{g})$, and its representations. First define quantum integers as $[m]_{q}=\frac{q^m-q^{-m}}{q-q^{-1}}$, quantum factorials as $[m]_{q}!=[m]_{q}\cdot [m-1]_{q}\cdot \ldots [1]_{q}$, and $q_{i}=q^{d_{i}}=q^{\left<\alpha_{i},\alpha_{i}\right>/2}$. As an associative algebra, $U_{q}(\mathfrak{g})$ is given by generators $E_1,\ldots E_{r},F_1,\ldots F_{r},$ and $q^{h}, h\in\mathfrak{h}$ (here, $q^{h}$ is a formal symbol for a generator, meant to suggest how this element will act on weight spaces), with  relations
\begin{eqnarray*}
q^{h}q^{h'}=q^{h+h'} & &  [E_{i},F_{j}]=\delta_{ij}\frac{q_i^{h_{i}}-q_i^{-h_{i}}}{q_i-q_i^{-1}}\\
q^hE_{i}q^{-h}=q^{\alpha_i(h)}E_{i} & & q^hF_{i}q^{-h}=q^{-\alpha_i(h)}E_{i},\\
\end{eqnarray*} with $q_{i}^{h_{i}}=q^{d_{i}h_{i}}$, and Serre relations
$$\sum_{k=0}^{1-a_{ij}}\frac{(-1)^k}{[k]_{q_{i}}! [1-a_{ij}-k]_{q_{i}}! }E_{i}^{1-a_{ij}-k}E_{j}E_{i}^{k}=0$$
$$\sum_{k=0}^{1-a_{ij}}\frac{(-1)^k}{[k]_{q_{i}}! [1-a_{ij}-k]_{q_{i}}! } F_{i}^{1-a_{ij}-k}F_{j}F_{i}^{k}=0. $$

$U_{q}(\mathfrak{g})$ is a Hopf algebra, with the coproduct $\Delta$, counit $\varepsilon$, and the antipode $S$ given on the generators by
$$\begin{array}{ccc}
\Delta(q^{h})=q^{h}\otimes q^{h} & \Delta(E_{i})=E_{i}\otimes q_{i}^{h_{i}}+1\otimes E_{i} & \Delta(F_{i})=F_{i}\otimes 1+q_{i}^{-h_{i}}\otimes F_{i}\\
\varepsilon(q^{h})=1 & \varepsilon(E_{i})=0 & \varepsilon(F_{i})=0 \\
S(q^{h})=q^{-h} & S(E_{i})=-E_{i}q_{i}^{-h_{i}} &  S(F_{i})=-q_{i}^{h_{i}}F_{i}
\end{array}$$

Representations of $U_{q}(\mathfrak{g})$ that we are going to consider are going to be in category $\mathcal{O}$ and of type I. This means that a representation is a vector space $V$ with an algebra homomorphism $U_{q}(\mathfrak{g})\to End(V)$, such that the weight spaces $V[\nu]=\{ v\in V | q^h.v=q^{\nu(h)}v \, \, \forall h\in \mathfrak{h} \}, \nu\in P$, are all finite dimensional, $V=\bigoplus_{\nu \in P} V[\nu]$, and all weights appearing with nonzero weight space will be contained in a union of finitely many cones of the form $\{\nu-\sum_in_{i}\alpha_{i},\, n_{i}\in \mathbb{N}_0\}$ in $P$. Moreover, we will only be interested in finite dimensional representations and Verma modules, defined below. 

As $U_{q}(\mathfrak{g})$ is a Hopf algebra, its representations form  tensor category, as an element $X\in U_{q}(\mathfrak{g})$ acts on a tensor product of representations by $\Delta(X)$. We can also define duals of representations. For a finite dimensional $U_{q}(\mathfrak{g})$ module $V$, define its left dual ${^*V}$ to be the space of functionals on $V$ together with the $U_{q}(\mathfrak{g})$ action $(X\varphi)(v)=\varphi(S^{-1}(X)v)$. Left dual space ${^*V}$ comes with natural isomorphisms ${^*V}\otimes U\cong \Hom_{\mathbb{C}}(\bold{1}, {^*V}\otimes U)\cong \Hom_{\mathbb{C}}(V,U)$ for every module $U$. In the classical case of $U(\mathfrak{g})$, we have $S=S^{-1}$, as $S(X)=-X$ for $X\in \mathfrak{g}$, so left dual modules for quantum groups are one of two possible generalizations of the notion of dual module for enveloping algebras. The other one is the right dual  module, defined using $S$ instead of $S^{-1}$. 

For any $\mu \in P$, let $M_{\mu}$ denote the Verma module with highest weight $\mu$. It is a module generated over $U_{q}(\mathfrak{g})$ by a distinguished singular vector $m_{\mu}$, with relations $E_{i}m_{\mu}=0$, $q^{h}m_{\mu}=q^{\mu(h)}m_{\mu}$. If $\mu\in P_{+}$, then $M_{\mu}$ has a finite dimensional irreducible quotient; call it $L_{\mu}$, and call the image of $m_{\mu}$ in it $l_{\mu}$. As in the classical case, the finite dimensional irreducible representations we are interested in are labeled by integral dominant weights. We will mainly be interested in them, and occasionally use an auxiliary Verma module. 

\begin{remark}\label{qh}
Note that the symbol $q^{h}$ denotes both the element $\exp(th)$ of the group $H$ and the generator of $U_{q}(\mathfrak{g})$. This makes sense  because on any representation $V$ in category $\mathcal{O}$, these elements diagonalize with the same weight spaces, and act on such a weight space $V[\nu]$ with the same eigenvalues: $q^{h}\in U_{q}(\mathfrak{g)}$ acts by $q^{\nu(h)}$, $e^{th}\in H$ acts by $e^{\nu(th)}$, and $q^{\nu(h)}=e^{t\nu(h)}=e^{\nu(th)}$.

In other words, there exists a group homomorphism from the multiplicative group of all elements of the form $q^{h} \in U_{q}(\mathfrak{g})$ to $H$ given by $q^{h}\mapsto \exp(th)$. It is surjective, its kernel is the set of all $q^{h}\in U_{q}(\mathfrak{g}), h\in 2\pi i \mathbb{Z}\Pi^{\vee}/t$, and any representation in category $\mathcal{O}$ factors through this homomorphism. 
\end{remark}

\begin{remark}
Another way to define the setup we need is to avoid defining the quantum group $U_{q}(\mathfrak{g})$ altogether, and to instead just define its category $\mathcal{O}$ of representations. Namely, we define objects in the category to be $P$-graded vector spaces $V$ with graded pieces $V[\nu], \nu \in P$, together with operators $E_i,F_i$, such that:
\begin{itemize}
\item all $V[\nu]$ are finite dimensional:
\item the set of all $\nu$ with $V[\nu]\ne 0$ is contained in a union of finitely many cones of the form $\{ \lambda-\sum_{i}n_{i}\alpha_{i}| n_{i}\in \mathbb{N}_{0} \}$;
\item $E_{i}:V[\nu]\to V[\nu+\alpha_i]$, $F_{i}:V[\nu]\to V[\nu-\alpha_i]$;
\item $E_{i}, F_{i}$ satisfy Serre relations;
\item $[E_i, F_{j}]|_{V[\nu]}=\delta_{ij}\frac{q_i^{\nu(h_{i})}-q_i^{-\nu(h_{i})}}{q_i-q_i^{-1}} id |_{V[\nu]}$.
\end{itemize}
Morphisms in the category are morphisms of graded vector spaces that commute with the operators $E_{i}, F_{i}$. Tensor structure of the category can be defined by similar formulas. 

It is obvious that these two definitions of category $\mathcal{O}$ are equivalent. The first one is the usual definition of an algebra and its category of representations. The advantages of the second one are that it avoids the ambiguity of defining $U_{q}(\mathfrak{g})$, allows for a very clear restriction functor from this category to the category of representations of $H$, avoids the representations of $U_{q}(\mathfrak{g})$ which are not of type I, and is a more direct generalization of the the category of representations of $U(\mathfrak{g})$ that we considered, because just replacing $q$ by $1$ and $[m]_{q}$ by $m$ in all formulas gives exactly the category of representations of $U(\mathfrak{g})$ we considered.
\label{justcat}
\end{remark}

A practical consequence of the last remark is that there is a functor from the category of $U_{q}(\mathfrak{g})$ representations considered above to the category of representations of the torus $H$, given by letting $x\in H$ act on the space $V[\mu]$ by $e^{\mu}(x)\mathrm{id}_{V[\mu]}$. In light of Remark \ref{qh}, it corresponds to restricting the representation of $U_{q}(\mathfrak{g})$ to the subalgebra generated by all the $q^{h}$.

Inspired by the classical case, define functions on a quantum group to be $$O_{q}(G)=\bigoplus_{\mu \in P_{+}} {^*L_{\mu}\otimes L_{\mu}}.$$
This is a $U_{q}(\mathfrak{g})$ module with the usual action of $U_{q}(\mathfrak{g})$ on ${^*L_{\mu}}\otimes L_{\mu}$.

\begin{remark}
See \cite{KS} for a discussion on various equivalent definitions of quantized algebras of functions on a Lie group. In particular, in Chapter 3, Proposition 2.1.2 it is shown that one can define $O_{q}(G)$ as $\bigoplus_{\mu}{L_{\mu}\otimes L_{\mu}^*}$. This definition is equivalent to ours, as there is an isomorphism $L_\mu^*\to L_{\mu^*}$ whose dual is an isomorphism $L_\mu\to {^*L_{\mu^*}}$. Here $\mu^*$ is a dual weight to $\mu$, and can be calculated as $\mu^*=-w_{0}\mu$ for $w_{0}$ the longest element of $W$. The map $\mu\mapsto \mu^*$ is an involution on the set of dominant integral weights.
\end{remark}

This will be the setting for the rest of the paper. Also, let $V$ will be a finite dimensional representation of type I in category $\mathcal{O}$, that is a direct sum of finitely many $L_{\mu}$. As we are interested in describing restrictions of functions with values in $V$, which corresponds to taking tensor products with $V$, all the statements and conditions will behave nicely with respect to decomposing $V$  into direct sums. This means that the restriction  theorems will hold for $V$ if and only if they will hold for every direct summand of $V$. As a consequence, we can at any point assume $V$ is irreducible, and all the conclusions we make will hold for any $V$ that is a direct sum of (possibly infinitely many) irreducible finite dimensional modules. 

As in the classical case, we have the following diagram:
$$ \begin{CD}
O_{q}(G)\otimes V = \bigoplus_{\mu \in P_{+}} {^*L_{\mu}}\otimes L_{\mu}\otimes V  \cong \bigoplus_{\mu \in P_{+}}  \Hom_{\mathbb{C}}(L_{\mu},L_{\mu} \otimes V)\\
@VV\Res V \\
O(H)\otimes V \cong \bigoplus_{\nu \in P} {\mathbb{C}^*_{\nu}}\otimes \mathbb{C}_{\nu}\otimes V \cong \bigoplus_{\nu \in P}  \Hom_{\mathbb{C}}(\mathbb{C}_\nu,\mathbb{C}_{\nu} \otimes V)\\
\end{CD}
$$

Here we are using that on $H$, $S=S^{-1}$ so $^*\mathbb{C}_{\nu}=\mathbb{C}^*_{\nu}$. Let us list all the maps and all the actions of $U_{q}(\mathfrak{g})$ on these spaces; checking that all maps are isomorphisms of $U_{q}(\mathfrak{g})$ modules is then a direct computation.
\begin{itemize}
\item The map $\Res$, as in the classical case, corresponds to decomposing the representation $L=L_{\mu}$ into weight spaces, making an $H$-representation out of each weight space by defining $x|_{L[\nu]}=e^{\nu}(x)\mathrm{id}_{L[\nu]}$, annihilating the non-diagonal part and taking the trace. As in Remark \ref{justcat}, this corresponds to understanding a representation $L$ of a quantum algebra $U_{q}(\mathfrak{g})$ as an $H$-representation given by weight decomposition together with the operators $E_{i}:L[\nu]\to L[\nu+\alpha_i]$ and $F_{i}:L[\nu]\to L[\nu-\alpha_i]$. Alternatively, it corresponds to understanding $H$ as a multiplicative subgroup of $U_{q}(\mathfrak{g})$ like in Remark \ref{qh}.
\item The maps $ {^*L_{\mu}}\otimes L_{\mu}\otimes V  \to  \Hom_{\mathbb{C}}(L_{\mu},L_{\mu} \otimes V)$ and ${\mathbb{C}^*_{\nu}}\otimes \mathbb{C}_{\nu}\otimes V \to  \Hom_{\mathbb{C}}(\mathbb{C}_\nu,\mathbb{C}_{\nu} \otimes V)$ are natural, as they were in the classical case: $$\varphi\otimes l\otimes v\mapsto \big( l'\mapsto \varphi(l') l\otimes v \big)$$ for $l,l'\in L_{\mu}, \varphi\in {^*L_{\mu}}, v\in V$, or for $l,l'\in \mathbb{C}_{\nu}, \varphi\in {\mathbb{C}^*_{\nu}}, v\in V$. 
\item The map ${\mathbb{C}^*_{\nu}}\otimes \mathbb{C}_{\nu}\otimes V\to O(H)\otimes V$ is, as in the classical case, given by $$\varphi\otimes l\otimes v\mapsto \big( x\mapsto \varphi(xl)v\big)$$ for $\varphi \in {\mathbb{C}^*_{\nu}}, l \in \mathbb{C}_{\nu}, v\in V, x\in H$.
\item $U_{q}(\mathfrak{g})$ action on ${^*L_{\mu}}\otimes L_{\mu}\otimes V$ is the usual one on a triple tensor product, with $X\in U_{q}(\mathfrak{g})$ acting by $\Delta^{2}(X)=(\Delta\otimes id )\circ \Delta (X)=X_{(1)}\otimes X_{(2)}\otimes X_{(3)}$ in Sweedler's notation.
\item The $U_{q}(\mathfrak{g})$-action on $\Hom_{\mathbb{C}}(L_{\mu},L_{\mu} \otimes V)$ is as follows: for $\Phi \in \Hom_{\mathbb{C}}(L_{\mu},L_{\mu} \otimes V), X\in U_{q}(\mathfrak{g}), l\in L_{\mu}$, $$(X\Phi)(l)=\Delta(X_{(2)}).\Phi(S^{-1}(X_{(1)})l)=(X_{(2)}\otimes X_{(3)}).\Phi(S^{-1}(X_{(1)})l).$$
\end{itemize}

We are interested in the $U_{q}(\mathfrak{g})$ invariants in $O_{q}(G)\otimes V$ and their restrictions to $O(H)\otimes V$. The above action of $U_{q}(\mathfrak{g})$ on $\Hom_{\mathbb{C}}(L_{\mu},L_{\mu} \otimes V)$ is the usual one, so the space of invariants is exactly $$\Hom_{\mathbb{C}}(L_{\mu},L_{\mu} \otimes V)^{U_{q}(\mathfrak{g})}=\Hom_{U_{q}(\mathfrak{g})}(L_{\mu},L_{\mu} \otimes V).$$ 
We can again write explicitly the composite map $\Hom_{\mathbb{C}}(L_{\mu},L_{\mu} \otimes V)\to O(H)\otimes V$; it is the trace map. It maps $\Phi\in \Hom_{\mathbb{C}}(L_{\mu},L_{\mu} \otimes V)$ to the polynomial function $\Psi: H\to V$ given by:
$$\Psi (x)=Tr|_{L_{\mu}}(\Phi\circ x)\in V.$$ By further abuse of notation, we will call this map $\Res$, as well as its restriction to the space of invariants. 

We can now state the main theorem, analogous to Theorem \ref{KNV2}.
\MAIN
The proof of the theorem will be given in Section \ref{proof}. We will review the definition and some properties of the dynamical Weyl group in Section \ref{dyn}.

\section{Intertwining operators, dynamical Weyl group and trace functions}
\label{dyn}

Let us first fix some conventions for the rank one case $U_{q}( \mathfrak{sl}_2)$. In that situation, the Cartan matrix is $C=[2]$ and $\mathfrak{h}$ is one dimensional. As $r=1$, we will use the notation $h=h_{1}$ and $\alpha=\alpha_{1}$; we have $\alpha(h)=2$. Then we can identify $\mathfrak{h}^*$ with $\mathbb{C}$ by $n \leftrightarrow n\frac{\alpha}{2}$. Integral weights $P$ are thus identified with integers $\mathbb{Z}$ and dominant integral ones $P_+$ with nonnegative integers $\mathbb{N}_{0}$. 

Next, let us describe the notion of expectation value for general $C$ and $U_{q}(\mathfrak{g})$. Let $V$ be a finite dimensional representation and $\nu$ a weight of $V$. Any $\Phi \in \Hom_{U_{q}(\mathfrak{g})}(M_{\lambda},M_{\lambda-\nu}\otimes V)$ is of the form $\Phi(m_{\lambda})=m_{\lambda-\nu}\otimes v+\textrm{l.o.t.}$, where l.o.t. denotes the lower order terms, meaning terms with first coordinate in a lower weight space. Obviously, $v\in V[\nu]$. Define the expectation value of $\Phi$ to be $\left< \Phi \right>=v$. That means that if $\varphi_{\lambda-\nu}$ denotes an element of the (algebraic) dual of $M_{\lambda-\nu}$ that is $1$ on $m_{\lambda-\nu}$ and $0$ on all other weight spaces of $M_{\lambda-\nu}$, then $\left< \Phi \right>=(\varphi_{\lambda-\nu}\otimes id)(\Phi(m_{\lambda}))\in V[\nu]$. An analogous map exists for the situation when Verma modules are replaced by irreducible modules, and we will also write it as $\left< \, \cdot \, \right>:\Hom_{U_{q}(\mathfrak{g})}(L_{\lambda},L_{\lambda-\nu}\otimes V)\to V[\nu]$.

A morphism of $U_{q}(\mathfrak{g})$ modules $M_{\lambda}\to M_{\mu}\otimes V$ or $L_{\lambda}\to L_{\mu}\otimes V$ is clearly determined by the image of the highest weight vector, but for generic $\lambda$ even more is true: it is determined by the first term of it, more precisely by the expectation value. The precise statement is in the following lemma:

\begin{lemma}
\begin{enumerate}
\item For generic $\lambda$ and for $\lambda$ integral dominant with sufficiently large coordinates $\lambda(h_i)$, the expectation value maps $\left< \, \cdot \, \right>$ define isomorphisms
$$\Hom_{U_{q}(\mathfrak{g})}(M_{\lambda},M_{\lambda-\nu}\otimes V)\cong V[\nu]\cong \Hom_{U_{q}(\mathfrak{g})}(L_{\lambda},L_{\lambda-\nu}\otimes V).$$
\item For $\nu=0$ and $\lambda$ dominant integral, the image of the injective map $\Hom_{U_{q}(\mathfrak{g})}(L_{\lambda},L_{\lambda}\otimes V)\to V[0]$ is $$\{ v\in V[0] \, | \, E_{i}^{\lambda(h_{i})+1}v=0, \, \, i=1,\ldots r  \}.$$
\item For $U_{q}(sl_{2})$, $V=L_{2m}$, $\nu=0$ and $\lambda$ dominant integral, the expectation value map $$\Hom_{U_{q}(\mathfrak{g})}(L_{\lambda}, L_{\lambda}\otimes V) \to V[0]$$ is an isomorphism if and only if $\lambda \notin \{ 0,1,2,\ldots m-1\}$. If $\lambda \in \{ 0,1,2,\ldots m-1\}$, then $\Hom_{U_{q}(\mathfrak{g})}(L_{\lambda}, L_{\lambda}\otimes V)=0$. 
\end{enumerate}\label{iso}
\end{lemma}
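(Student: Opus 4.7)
My plan is to prove the three parts in order, since each builds on the previous. For Part (1), I start with the Verma version. The universal property of $M_\lambda$ identifies $\Hom_{U_q(\mathfrak{g})}(M_\lambda, M_{\lambda-\nu}\otimes V)$ with the space of singular weight-$\lambda$ vectors in $M_{\lambda-\nu}\otimes V$, and the expectation value reads off the coefficient of $m_{\lambda-\nu}\otimes(\cdot)$, which lies in $V[\nu]$ by weight considerations. To invert the map I would construct, for any $v\in V[\nu]$, a singular vector of the form $m_{\lambda-\nu}\otimes v+(\text{corrections in }U_q(\mathfrak{n}_-)_{>0}\cdot m_{\lambda-\nu}\otimes V)$ by solving the system $E_i\cdot(\text{vector})=0$ inductively on the root-height of the correction: the coefficient matrix is built from Shapovalov pairings on $M_{\lambda-\nu}$, and is invertible for generic $\lambda$. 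Equivalently, a Verma filtration of $M_{\lambda-\nu}\otimes V$ with subquotients $M_{\lambda-\nu+\eta}$ (one for each weight vector in $V[\eta]$) shows that for generic $\lambda$ only the $\eta=\nu$ layers contribute weight-$\lambda$ singular vectors, matching $\dim V[\nu]$ on both sides and yielding both injectivity and surjectivity. The $L_\lambda$ version follows once $\lambda(h_i)$ is large enough that the Verma intertwiner automatically descends to irreducible quotients, the descent criterion being the one derived in Part (2), which is vacuous once each $\lambda(h_i)$ exceeds the $E_i$-nilpotency index on $V$.

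For Part (2), fix $v\in V[0]$ and take the Verma intertwiner $\Phi_v:M_\lambda\to M_\lambda\otimes V$ from Part (1); it is defined for generic $\lambda$ and extends polynomially in $\lambda$ to all dominant integral $\lambda$. Since $L_\lambda=M_\lambda/N_\lambda$ with $N_\lambda=\sum_i U_q(\mathfrak{g})\,F_i^{\lambda(h_i)+1}m_\lambda$, the map $\Phi_v$ descends to $L_\lambda\to L_\lambda\otimes V$ iff $\Phi_v(F_i^{\lambda(h_i)+1}m_\lambda)$ lies in the kernel of $M_\lambda\otimes V\twoheadrightarrow L_\lambda\otimes V$ for each $i$. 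To convert this into a condition on $v$, I would expand $\Delta(F_i)^{\lambda(h_i)+1}$ via the quantum Leibniz rule as a $q_i$-binomial sum of terms $F_i^{\lambda(h_i)+1-k}q_i^{-kh_i}\otimes F_i^k$, apply it to $\Phi_v(m_\lambda)=m_\lambda\otimes v+(\text{l.o.t.})$, and reduce modulo the kernel. The leading component in $L_\lambda\otimes V$, after using that the lower-order terms are fixed by the Verma recursion from Part (1) and performing the resulting Shapovalov cancellations, collapses to a nonzero scalar multiple of $E_i^{\lambda(h_i)+1}v$; hence descent is equivalent to the condition $E_i^{\lambda(h_i)+1}v=0$ for every $i$. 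Injectivity of the expectation value map on the irreducible side follows from a similar filtration argument: a singular weight-$\lambda$ vector in $L_\lambda\otimes V$ with vanishing leading term would project nontrivially only into irreducible summands of strictly greater highest weight, which is impossible.

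Part (3) follows directly from Part (2). For $U_q(\mathfrak{sl}_2)$ with $V=L_{2m}$, the space $V[0]$ is one-dimensional, spanned by $v_0=F^m v_{2m}$, and a direct rank-one calculation gives $E^k v_0=(\text{nonzero }q\text{-scalar})\cdot F^{m-k}v_{2m}$ for $k\le m$ and $E^k v_0=0$ for $k>m$; hence $E^{\lambda+1}v_0=0$ iff $\lambda\ge m$. By Part (2) the image of the expectation value map is then all of $V[0]$ for $\lambda\ge m$ (giving an isomorphism), and is $\{0\}$ for $\lambda\in\{0,1,\ldots,m-1\}$, forcing the Hom space to vanish by injectivity. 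The main technical obstacle is the $q$-binomial reduction in Part (2): one has to simultaneously control the quantum Leibniz expansion of $\Delta(F_i)^{\lambda(h_i)+1}$, the lower-order tail of $\Phi_v(m_\lambda)$, and the defining relations of $N_\lambda$, and verify that after all cancellations among $q_i$-binomial coefficients and weight-shift factors $q_i^{-kh_i}$ the sole surviving obstruction is precisely $E_i^{\lambda(h_i)+1}v$.
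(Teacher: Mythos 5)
Parts (1) and (3) of your proposal are essentially sound and follow the paper's route: part (1) is the standard construction of the inverse of the expectation value map by solving the singular-vector recursion (this is Lemma 1 of [EV1], which the paper cites), together with the highest-weight-term argument for injectivity on the irreducible side; part (3) is the same one-line deduction from part (2) that the paper gives. The problem is part (2), on which part (3) and (via your descent criterion) the irreducible half of part (1) both rest.

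Your part (2) has a genuine gap, and I believe the step would fail rather than merely being incomplete. First, the premise that $\Phi^{v}_{\lambda}$ ``extends polynomially in $\lambda$ to all dominant integral $\lambda$'' is false: the lower-order terms of $\Phi^{v}_{\lambda}(m_{\lambda})$ are rational in $q^{\lambda}$ with poles, and these poles occur exactly at the dominant integral $\lambda$ you need to treat (in the $\mathfrak{sl}_2$ example of the paper the coefficient is $-2/\mu$, singular at $\mu=0$; the paper's closing remark in the proof of (3) notes that at the bad $\lambda$ the Verma intertwiner space is still one-dimensional but its expectation value is $0$, so no $\Phi^{v}_{\lambda}$ with $\left<\Phi^{v}_{\lambda}\right>=v\neq 0$ exists there). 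Second, even where $\Phi^{v}_{\lambda}$ does exist, the descent obstruction you propose to compute is automatically zero: the image of $\Phi^{v}_{\lambda}(m_{\lambda})$ in $L_{\lambda}\otimes V$ is a singular vector of weight $\lambda$ in a finite-dimensional (hence completely reducible) module, so it generates a copy of $L_{\lambda}$, in which $F_{i}^{\lambda(h_{i})+1}$ kills the generator; thus the composite $M_{\lambda}\to M_{\lambda}\otimes V\to L_{\lambda}\otimes V$ always factors through $L_{\lambda}$, and your $q$-Leibniz computation of $\Phi^{v}_{\lambda}(F_{i}^{\lambda(h_{i})+1}m_{\lambda})$ modulo the kernel must come out to $0$, not to a nonzero multiple of $E_{i}^{\lambda(h_{i})+1}v$. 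The true obstruction is the existence of a singular vector in $L_{\lambda}\otimes V$ with leading term $l_{\lambda}\otimes v$, not the descent of a given one. The paper sidesteps all of this by applying Frobenius reciprocity twice, $\Hom_{U_{q}(\mathfrak{g})}(M_{\lambda},L_{\lambda}\otimes V)\cong \Hom_{U_{q}(\mathfrak{b}_{+})}(\mathbb{C}_{\lambda}\otimes L_{\lambda}^{*},V)$, and reading off the condition $E_{i}^{\lambda(h_{i})+1}v=0$ directly from the presentation of $L_{\lambda}^{*}$ as the lowest-weight module with relations $E_{i}^{\lambda(h_{i})+1}\phi_{-\lambda}=0$; you would need to replace your descent computation by an argument of this kind (or by directly solving the singular-vector equations in $L_{\lambda}\otimes V$, as in Theorem 7.1 of [EV2]).
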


\begin{proof}
\begin{enumerate}
\item For $\lambda$ generic or integral dominant with sufficiently large coordinates we have the following diagram:
$$\xymatrix{ 
\Hom_{U_{q}(\mathfrak{g})}(M_{\lambda},M_{\lambda-\nu}\otimes V) \ar[r] \ar[rd]   & \Hom_{U_{q}(\mathfrak{g})}(M_{\lambda},L_{\lambda-\nu}\otimes V) \ar[r]^{\simeq}  & \Hom_{U_{q}(\mathfrak{g})}(L_{\lambda},L_{\lambda-\nu}\otimes V) \ar@{_{(}->}[ld] \\
& V[\nu] \\
}$$

The map $\Hom_{U_{q}(\mathfrak{g})}(M_{\lambda},M_{\lambda-\nu}\otimes V) \to \Hom_{U_{q}(\mathfrak{g})}(M_{\lambda},L_{\lambda-\nu}\otimes V)$ is the composition with the projection map $M_{\lambda-\nu}\otimes V\to L_{\lambda-\nu}\otimes V$, and it is defined for any $\lambda$. In general, it is not injective. 

The map $\Hom_{U_{q}(\mathfrak{g})}(M_{\lambda},L_{\lambda-\nu}\otimes V) \to \Hom_{U_{q}(\mathfrak{g})}(L_{\lambda},L_{\lambda-\nu}\otimes V)$ is defined when all homomorphisms $M_{\lambda}\to L_{\lambda-\nu}\otimes V$ factor through $L_{\lambda}$. In particular, this happens if $\lambda$ is generic (in which case $M_{\lambda}=L_{\lambda}$ and the map is the identity), or when $\lambda-\nu$ is dominant integral (in which case $L_{\lambda-\nu}\otimes V$ is finite dimensional, so every map $M_{\lambda}\to L_{\lambda-\nu}\otimes V$ factors through the finite dimensional $L_{\lambda}$). In both of these cases, the map is an isomorphism. 

Both maps to $V[\nu]$ are the expectation value maps. 

Let us show that $\Hom_{U_{q}(\mathfrak{g})}(L_{\lambda},L_{\lambda-\nu}\otimes V)\to V[\nu]$ is injective. Pick a basis $v_{i}$ of  weight vectors for $V$. Let $\Phi  \ne 0 \in \Hom_{U_{q}(\mathfrak{g})}(L_{\lambda},L_{\lambda-\nu}\otimes V)$. Consider $\Phi (l_{\lambda})=\sum_{i} l_{i}\otimes v_{i}$ for some $l_{i}\in L_{\lambda-\nu}$. Because $\Phi (l_{\lambda})$ and all $v_{i}$ are weight vectors and $v_{i}$ are a basis, all the $l_{i}$ are weight vectors as well. Pick $l_{i_{0}}\ne 0$ with a highest weight among all nonzero $l_{i}$. Because $\Phi (l_{\lambda})$ is singular and $l_{i_{0}}$ has highest weight, $l_{i_{0}}$ is a singular vector in $L_{\lambda-\nu}$. Thus, $l_{i_{0}}=c\cdot l_{\lambda-\nu}$ for some $c\ne 0 \in \mathbb{C}$, and $\left< \Phi \right> =c\cdot v_{i_{0}}\ne 0$, so the expectation value map is injective.

Lemma 1 in \cite{EV1} states that for $\lambda$ generic, and in particular integral dominant with sufficiently large coordinates, the expectation value map $\Hom_{U_{q}(\mathfrak{g})}(M_{\lambda},M_{\lambda-\nu}\otimes V)\to V[\nu]$ is an isomorphism. The proof is straightforward, by noticing that the conditions on this map being an isomorphism are that a certain set of linear equations has a unique solution. It is a general argument of the type we used in Example \ref{primjer} for $\mathfrak{sl}_{2}$.

As the diagram from the beginning of the proof commutes, whenever the map $$\Hom_{U_{q}(\mathfrak{g})}(M_{\lambda},M_{\lambda-\nu}\otimes V)\to V[\nu]$$ is an isomorphism, the map $\Hom_{U_{q}(\mathfrak{g})}(L_{\lambda},L_{\lambda-\nu}\otimes V)\to V[\nu]$ is surjective and therefore also an isomorphism.

\item The map is injective due to proof of part (1). This proof also shows that with the assumptions of (2), namely $\nu=0$ and $\lambda$ dominant integral, $$\Hom_{U_{q}(\mathfrak{g})}(L_{\lambda},L_{\lambda}\otimes V)\cong \Hom_{U_{q}(\mathfrak{g})}(M_{\lambda},L_{\lambda}\otimes V).$$ The Verma module $M_{\lambda}$ is induced to $U_{q}(\mathfrak{g})$ from the subalgebra $U_{q}(\mathfrak{b}_{+})$, generated by all $q^{h}$ and $E_{i}$; the $U_{q}(\mathfrak{b}_{+})$ module we are inducing from is the one dimensional module $\mathbb{C}_{\lambda}$, with $q^{h}$ acting on it by $q^{\lambda (h)}\mathrm{id}$ and $E_{i}$ acting on it by $0$. So, $$\Hom_{U_{q}(\mathfrak{g})}(M_{\lambda},L_{\lambda}\otimes V)\cong \Hom_{U_{q}(\mathfrak{b}_{+})}(\mathbb{C}_{\lambda},L_{\lambda}\otimes V) \cong \Hom_{U_{q}(\mathfrak{b}_{+})}(\mathbb{C}_{\lambda}\otimes L_{\lambda}^*,V).$$

$L_{\lambda}^*$ is a lowest weight module with the lowest weight $-\lambda$. We can define the lowest weight analogue of Verma module $M^{-}_{-\lambda}$, which is induced from the module $\mathbb{C}_{-\lambda}$ over the subalgebra generated by all $q^h$ to the algebra $U_{q}(\mathfrak{b}_{+})$; so as a vector space it is isomorphic to the subalgebra $U_q(\mathfrak{n}_{+})$ generated by all the $E_{i}$. Call its lowest weight vector $\phi_{-\lambda}$. The module $L_{\lambda}^*$ is then known to be the quotient of $M^{-}_{-\lambda}$ by relations $E_{i}^{\lambda(h_{i})+1}\phi_{-\lambda}=0$.

Because of that, any $U_{q}(\mathfrak{b}_{+})$ map $\mathbb{C}_{\lambda}\otimes L_{\lambda}^*\to V$ is determined by the image of the lowest weight vector $1\otimes \phi_{-\lambda}$ in $V$. This must be a vector $v\in V$ of weight $\lambda-\lambda=0$, such that $E_{i}^{\lambda(h_{i})+1}.v=0$. It is clear that any such vector will define a $U_{q}(\mathfrak{b}_{+})$ intertwining operator $\mathbb{C}_{\lambda}\otimes L_{\lambda}^*\to V$.

The only thing left to notice is that under the isomorphism $$\Hom_{U_{q}(\mathfrak{b}_{+})}(\mathbb{C}_{\lambda}\otimes L_{\lambda}^*,V)\cong \Hom_{U_{q}(\mathfrak{g})}(L_{\lambda},L_{\lambda}\otimes V),$$ the vector $v$ from above corresponds to the expectation value of an intertwining operator $L_{\lambda}\to L_{\lambda}\otimes V$.

\item 
This follows directly from 2). $V[0]$ is one dimensional, so either the injective map $$\Hom_{U_{q}(\mathfrak{g})}(L_{\lambda},L_{\lambda}\otimes V)\to V[0]$$ is an isomorphism or the space $\Hom_{U_{q}(\mathfrak{g})}(L_{\lambda},L_{\lambda}\otimes V)$ is zero. As $d=1$, and after the identification $\mathfrak{h}^*\cong \mathbb{C}$ we have $\left< \lambda,\alpha \right>=\lambda \frac{\left< \alpha,\alpha \right>}{2}=\lambda$, part 2) of the lemma tells us that the image of the expectation value map is the set of $v\in V[0]$ such that $E^{\lambda+1}.v=0$. The maps $E:V[2i]\to V[2i+2] $ are injective for $i\ne m$, and $E^{\lambda+1}.v\in V[2\lambda +2]$, we conclude that the image of the map is zero unless $\lambda+1>m$, that is if $0\le \lambda \le m-1$. If $\lambda \ge m$, the set of such $v$ that $E^{\lambda+1}.v=0$ is the entire $V[0]$, so the injective map is an isomorphism.

This ends the proof, but it is interesting to note that the last case of $\lambda\in\{ 0,\ldots m-1\}$ is exactly when the commutative diagram from the beginning of this proof fails to be a commutative diagram of isomorphisms: $ \Hom_{U_{q}(\mathfrak{g})}(L_{\lambda},L_{\lambda}\otimes V)=0= \Hom_{U_{q}(\mathfrak{g})}(M_{\lambda},L_{\lambda}\otimes V)$; the spaces $V[0]$ and $ \Hom_{U_{q}(\mathfrak{g})}(M_{\lambda},M_{\lambda}\otimes V)\cong  \Hom_{U_{q}(\mathfrak{g})}(M_{\lambda},M_{-\lambda-2}\otimes V)$ are one dimensional, but the map between them is $0$.

\end{enumerate}
\end{proof}

\begin{remark}
Another way to prove (3) is to calculate explicitly the conditions on a vector in $L_{\lambda}\otimes V$ to be a singular vector of weight $\lambda$, and get a set of linear equations that have a solution if and only if $\lambda$ is in the above set. This is done in the first part of Theorem 7.1. in \cite{EV2}. 
\end{remark}

Following the notation in \cite{EV1}, for those $\lambda$ for which the expectation value map $$\left< \cdot \right> :  \Hom_{U_{q}(\mathfrak{g})}(M_{\lambda},M_{\lambda-\nu}\otimes V)\to V[\nu]$$ is an isomorphism, let $v\mapsto \Phi^{v}_{\lambda}$ be the inverse map; i.e. $ \Phi^{v}_{\lambda}$ is an intertwining operator such that $\left< \Phi^{v}_{\lambda} \right>=v$. For the same situation, let $\overline{\Phi}^{v}_{\lambda}$ be the intertwiner $L_{\lambda}\to L_{\lambda-\nu}\otimes V$ with $\left< \overline{\Phi}^{v}_{\lambda} \right>=v$.

The Weyl group $W$ is generated by simple reflections $s_i$ associated to simple roots $\alpha_{i}$. Let $\rho \in P\subseteq \mathfrak{h}^*$ be a weight such that $\rho (h_i)=1 \, \forall i$. Let the dot $w\cdot \lambda=w(\lambda+\rho)-\rho$ denote the shifted action of the Weyl group on $\mathfrak{h^*}$. The dynamical Weyl group of $V$ is a collection of operator valued functions $A_{w,V}(\lambda)$ labeled by $w\in W$, rational in $q^{\lambda}, \lambda \in \mathfrak{h}$, with $A_{w,V}(\lambda):V[\nu]\to V[w\cdot \nu]$. To define these operators, we first need a bit more notation and results from \cite{EV2}. Let $w=s_{i_1}\ldots s_{i_{l}}$ be a reduced decomposition of $w\in W$. Let $\lambda\in P_{+}$, and let $\alpha^{l}=\alpha_{i_{l}}$, $\alpha^{j}=(s_{i_{l}}\ldots s_{i_{j+1}})\alpha_{i_{j}}, \,\, j=1,\ldots l-1$. Let $n_{j}=2\left<\lambda+\rho,\alpha^{j} \right>/\left<\alpha^{j},\alpha^{j} \right>$. These are positive integers. Let $d^{j}=d_{i_j}$ be the symmetrizing numbers defined before. The following is Lemma 2 from \cite{EV1}:

\begin{lemma}
For $\lambda\in P_{+}$, the set of pairs $(n_{1},d^{1}), \ldots (n_{l},d^{l})$ and the product $F^{n_{1}}_{i_{1}} \ldots F^{n_{l}}_{i_{l}}$ don't depend on the reduced decomposition of $w\in W$. Hence, the vector $$m^{\lambda}_{w\cdot \lambda}=\frac{F^{n_{1}}_{i_{1}} \ldots F^{n_{l}}_{i_{l}}}{ [n_1]_{q^{d_{1}}}\ldots [n_l]_{q^{d_{l}}}}m_{\lambda} \in M_{\lambda}$$ is well defined. It is a singular vector of weight $w\cdot \lambda$.
\end{lemma}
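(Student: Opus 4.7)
The strategy is to treat the three assertions—invariance of the multiset $\{(n_{j}, d^{j})\}$, invariance of the operator $F_{i_{1}}^{n_{1}}\cdots F_{i_{l}}^{n_{l}}$, and singularity of weight $w\cdot\lambda$—together, by combining a combinatorial observation with an induction on $\ell(w)$ and a uniqueness statement for singular vectors.

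First I would dispatch the multiset claim by the standard combinatorial fact that as $j$ varies through $\{1,\ldots,l\}$, the roots $\alpha^{j} = s_{i_{l}}\cdots s_{i_{j+1}}\alpha_{i_{j}}$ enumerate bijectively the set $\{\alpha > 0 : w\alpha < 0\}$, an intrinsic invariant of $w$ of cardinality $\ell(w)$. Since $\langle\cdot,\cdot\rangle$ is $W$-invariant we have $\langle\alpha^{j}, \alpha^{j}\rangle = \langle\alpha_{i_{j}}, \alpha_{i_{j}}\rangle = 2d_{i_{j}}$, so $d^{j} = d_{i_{j}}$, and both $d^{j}$ and $n_{j} = 2\langle\lambda+\rho,\alpha^{j}\rangle/\langle\alpha^{j},\alpha^{j}\rangle$ depend only on $\alpha^{j}$ and $\lambda$. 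For $\lambda\in P_{+}$ the weight $\lambda+\rho$ is strictly dominant, so each $n_{j}$ is automatically a positive integer.

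Next I would show by induction on $\ell(w)$ that $F_{i_{1}}^{n_{1}}\cdots F_{i_{l}}^{n_{l}} m_{\lambda}$ is a singular vector of weight $w\cdot\lambda$ in $M_{\lambda}$. Set $v' = F_{i_{2}}^{n_{2}}\cdots F_{i_{l}}^{n_{l}} m_{\lambda}$ and $w' = s_{i_{2}}\cdots s_{i_{l}}$, so that by induction $v'$ is singular of weight $w'\cdot\lambda$. The relations $[E_{k}, F_{i_{1}}] = 0$ for $k \neq i_{1}$ immediately give $E_{k} F_{i_{1}}^{n_{1}} v' = 0$. For $k = i_{1}$, I combine $E_{i_{1}} v' = 0$ with the standard quantum $\mathfrak{sl}_{2}$ identity
\[
[E_{i_{1}}, F_{i_{1}}^{n_{1}}] = [n_{1}]_{q_{i_{1}}}\, F_{i_{1}}^{n_{1}-1}\, [h_{i_{1}} + 1 - n_{1}]_{q_{i_{1}}}.
\]
Vanishing of this on $v'$ reduces to checking $(w'\cdot\lambda)(h_{i_{1}}) + 1 = n_{1}$, which follows from $\alpha^{1} = (w')^{-1}\alpha_{i_{1}}$ by a short calculation using the $W$-invariance of $\langle\cdot,\cdot\rangle$. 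The weight identity $\sum_{j} n_{j}\alpha_{i_{j}} = (\lambda+\rho) - w(\lambda+\rho)$ drops out of the same induction, so $F_{i_{1}}^{n_{1}} v'$ lies in weight $s_{i_{1}}\cdot(w'\cdot\lambda) = w\cdot\lambda$.

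Finally, to conclude that the operator and (up to the scalar normalization $\prod[n_{j}]_{q^{d_{j}}}$) the vector are independent of the reduced decomposition, I would appeal to the quantum Shapovalov/Kac--Kazhdan theorem, which gives $\dim\Hom_{U_{q}(\mathfrak{g})}(M_{w\cdot\lambda}, M_{\lambda}) \leq 1$ for $\lambda\in P_{+}$. Hence the space of singular vectors of weight $w\cdot\lambda$ in $M_{\lambda}$ is at most one-dimensional, so two reduced decompositions produce singular vectors that agree up to a nonzero scalar; running the induction in parallel for both decompositions and matching at each step the leading coefficient of $m_{w\cdot\lambda}$ inside $M_{\lambda}$ forces the scalar to be $1$ after the prescribed normalization. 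Because $M_{\lambda} = U_{q}(\mathfrak{n}_{-}) m_{\lambda}$ is a free $U_{q}(\mathfrak{n}_{-})$-module of rank one, equality of these vectors in $M_{\lambda}$ is equivalent to equality of the corresponding operators in $U_{q}(\mathfrak{n}_{-})$. The chief obstacle is precisely this one-dimensionality of singular vectors in the quantum setting, a Shapovalov-type input without which one would have to verify invariance by checking each elementary braid move against the quantum Serre relations directly.
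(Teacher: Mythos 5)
The paper does not actually prove this statement: it is quoted verbatim as Lemma~2 of [EV1], so there is no internal proof to compare against. Judged on its own terms, your first two steps are correct and complete. The identification of $\{\alpha^{1},\ldots,\alpha^{l}\}$ with the inversion set $\{\alpha>0 : w\alpha<0\}$ does give reduced-word-independence of the multiset $\{(n_j,d^j)\}$, and your induction establishing singularity is sound: the suffix $s_{i_2}\cdots s_{i_l}$ is again reduced with the same associated exponents $n_2,\ldots,n_l$, and the vanishing condition $n_1=(w'\cdot\lambda)(h_{i_1})+1$ checks out via $\alpha^{1}=(w')^{-1}\alpha_{i_1}$ and $W$-invariance of $\left<\cdot,\cdot\right>$.

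The gap is in the last step. Uniqueness of singular vectors (the quantum Shapovalov/BGG input, which does hold for $q$ not a root of unity and is a fair citation) only yields $F_{i_1}^{n_1}\cdots F_{i_l}^{n_l}=c\,F_{j_1}^{m_1}\cdots F_{j_l}^{m_l}$ for some scalar $c$, and the entire remaining content of the lemma is that $c=1$ --- this is exactly what makes $m^{\lambda}_{w\cdot\lambda}$ well defined on the nose, which the paper needs for the dynamical Weyl group operators. Your proposed mechanism, ``matching at each step the leading coefficient of $m_{w\cdot\lambda}$ inside $M_{\lambda}$,'' is not a well-defined procedure: there is no canonical copy of $m_{w\cdot\lambda}$ in $M_{\lambda}$ prior to proving the lemma, $U_{q}(\mathfrak{n}_{-})$ has no preferred monomial basis in which to read off a leading coefficient, and the two inductions pass through different chains of intermediate elements of $W$, so there is no common ``step'' at which to compare. (Natural shortcuts also fail: the singular vector lies in the radical of the contravariant form, and the abelianization of $U_{q}(\mathfrak{n}_{-})$ kills both monomials.) The standard way to close this is to invoke Tits' theorem to reduce to a single braid move inside a rank-two subsystem and then verify the resulting identity directly --- these are the quantum Verma identities, e.g.\ $F_{i}^{(a)}F_{j}^{(a+b)}F_{i}^{(b)}=F_{j}^{(b)}F_{i}^{(a+b)}F_{j}^{(a)}$ in type $A_2$, which in plain powers have equal normalizing factors on both sides and hence give equality of the unnormalized products. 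Without that rank-two verification (or an explicit computation of $c$), the equality of the operators, and hence the well-definedness of $m^{\lambda}_{w\cdot\lambda}$, is not established.
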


We will use Proposition 15 and Corollary 16 from \cite{EV1} to define the dynamical Weyl group action. 

\begin{definition}
Let $v\in V[\nu]$, $w\in W$, $\lambda\in P_{+}$ with large enough coordinates compared with $\nu$. We have $$\Phi^{v}_{\lambda}(m_{\lambda})=m_{\lambda-\nu}\otimes v+\textrm{l.o.t.}.$$
Define $A_{w,V}(\lambda)v\in V[w\cdot \nu]$ by 
$$\Phi^{v}_{\lambda}(m^{\lambda}_{w\cdot \lambda})=m^{\lambda-\nu}_{w\cdot (\lambda-\nu)}\otimes A_{w,V}(\lambda)v+\textrm{l.o.t.}$$ (The proof that this is well defined, i.e. that the vector $\Phi^{v}_{\lambda}(m^{\lambda}_{w\cdot \lambda})$ is of that form, is in \cite{EV1}).

The operators $A_{w,V}(\lambda)$, defined for $\lambda$ dominant integral with large enough coordinates, depend rationally on $q^{\lambda}$ (in the sense that their coefficients in any basis are rational functions of $q^{\lambda(h_{i})}$), so they can be uniquely extended to rational functions of $q^{\lambda}$, for $\lambda\in \mathfrak{h}^*$. 
\end{definition}

The operators $A_{w,V}(\lambda)$ do not, in general, define a representation of the Weyl group. However, we have a weaker result below (Lemma 17 and Corollary 29 from \cite{EV1}). Let $l$ be the length function on the Weyl group $W$, defined to be the length of the shortest reduced expression. 
\begin{proposition}
\begin{enumerate}
\item If $w_1, w_2\in W$ such that $l(w_1w_2)=l(w_1)+l(w_2)$, then $$A_{w_1w_2,V}(\lambda)=A_{w_1,V}(w_2\cdot \lambda)A_{w_2,V}(\lambda).$$
\item Restrictions of operators $A_{w,V}(\lambda)$ to $V[0]$ satisfy $$A_{w_1w_2,V}(\lambda)=A_{w_1,V}(w_2\cdot \lambda)A_{w_2,V}(\lambda)$$ without any requirements on the length of $w_{i}\in W$. 
\end{enumerate}\label{isitarep}
\end{proposition}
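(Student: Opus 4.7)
The plan for both parts is to exploit the inductive construction of Verma-module singular vectors: when $\ell(w_1 w_2)=\ell(w_1)+\ell(w_2)$, one may choose a reduced decomposition for $w_1 w_2$ that concatenates reduced decompositions for $w_1$ and $w_2$, and the preceding Lemma then implies that $m^\lambda_{w_1 w_2\cdot\lambda}$ is produced from $m^\lambda_{w_2\cdot\lambda}$ by applying the divided-power product of $F_i$'s corresponding to the $w_1$-block. Equivalently, the submodule of $M_\lambda$ generated by $m^\lambda_{w_2\cdot\lambda}$ is isomorphic to $M_{w_2\cdot\lambda}$, and under this isomorphism $m^\lambda_{w_1 w_2\cdot\lambda}$ corresponds to the $w_1$-singular vector of $M_{w_2\cdot\lambda}$.

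For part (1), I would restrict $\Phi^v_\lambda$ to the submodule $M_{w_2\cdot\lambda}\subset M_\lambda$ generated by $m^\lambda_{w_2\cdot\lambda}$. By the definition of $A_{w_2,V}(\lambda)$, this restriction has leading term
\[
\Phi^v_\lambda(m^\lambda_{w_2\cdot\lambda}) = m^{\lambda-\nu}_{w_2\cdot(\lambda-\nu)}\otimes A_{w_2,V}(\lambda)v + \text{l.o.t.},
\]
so modulo terms with strictly lower weight in the first tensor factor, it coincides with the intertwiner $\Phi^{A_{w_2,V}(\lambda)v}_{w_2\cdot\lambda}\colon M_{w_2\cdot\lambda}\to M_{w_2\cdot(\lambda-\nu)}\otimes V$ composed with the natural embedding $M_{w_2\cdot(\lambda-\nu)}\hookrightarrow M_{\lambda-\nu}$ as the submodule generated by $m^{\lambda-\nu}_{w_2\cdot(\lambda-\nu)}$. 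Applying the definition of $A_{w_1,V}$ at the spectral parameter $w_2\cdot\lambda$ to this latter intertwiner, and using that the shifted action is a genuine group action so that $w_1\cdot(w_2\cdot\lambda) = w_1w_2\cdot\lambda$, reads off the leading coefficient as $A_{w_1,V}(w_2\cdot\lambda)A_{w_2,V}(\lambda)v$. Comparison with the direct computation of the leading term of $\Phi^v_\lambda(m^\lambda_{w_1w_2\cdot\lambda})$ using the chosen concatenated reduced decomposition yields (1). The main technical obstacle is precisely the bookkeeping of these lower-order terms: one must verify that when the $w_1$-block of $F$'s is applied via $\Delta(F_i) = F_i\otimes 1 + q_i^{-h_i}\otimes F_i$, the l.o.t.\ produced by $A_{w_2,V}(\lambda)$ at the first step cannot contribute to the highest-weight component of the first tensor factor at the final step, so that extraction of the leading coefficient is unaffected.

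For part (2), the key additional input is the rank-one square identity
\[
A_{s_i,V}(s_i\cdot\lambda)\,A_{s_i,V}(\lambda)\,v = v\qquad\text{for all } v\in V[0],
\]
which I would establish by restricting $V$ to the rank-one subalgebra $U_{q_i}(\mathfrak{sl}_2)$ generated by $E_i, F_i, q^{h_i}$ and reducing to a direct computation on the $U_{q_i}(\mathfrak{sl}_2)$-isotypic decomposition of $V$; on $V[0]$, the rank-one dynamical operator squares to the identity on each simple summand. With this square identity in hand, I would invoke Matsumoto's theorem: any two words in the simple reflections representing $w_1 w_2$ differ by a sequence of braid moves and cancellations $s_is_i\to e$. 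Part (1) handles the braid moves (both sides of a braid relation are reduced expressions of the same element, so the length-additive hypothesis applies), and the square identity handles the cancellations. Iterating these two reductions removes the length-additivity hypothesis from (1) on the zero weight space, yielding (2).
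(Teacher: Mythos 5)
The paper does not actually prove this proposition; it is imported verbatim from \cite{EV1} (Lemma 17 and Corollary 29 there), so there is no in-paper argument to compare against. Your proposal is, in outline, a correct reconstruction of the proof given in \cite{EV1}. For part (1), the concatenation of reduced words, the identification of the submodule of $M_\lambda$ generated by $m^{\lambda}_{w_2\cdot\lambda}$ with $M_{w_2\cdot\lambda}$, and the two-step extraction of the leading coefficient is the right mechanism, and you have correctly isolated the one genuinely nontrivial point: that the lower-order terms of $\Phi^{v}_{\lambda}(m^{\lambda}_{w_2\cdot\lambda})$ cannot contaminate the coefficient of $m^{\lambda-\nu}_{w_1w_2\cdot(\lambda-\nu)}$ once the $w_1$-block of divided powers of the $F_i$ is applied. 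That triangularity statement is essentially Proposition 15 of \cite{EV1}, which this paper also takes on faith (see the parenthetical remark in the definition of $A_{w,V}$), so by the paper's own standards your part (1) is complete modulo the same citation. For part (2), the square identity $A_{s_i,V}(s_i\cdot\lambda)A_{s_i,V}(\lambda)=\mathrm{id}$ on $V[0]$ is correct and can be checked directly from Lemma \ref{explicitA}: in the rank-one normalization $s\cdot\lambda=-\lambda-2$, and using $[-x]_q=-[x]_q$ the product of the two scalars telescopes to $1$; the reduction of the general case to the rank-one subalgebra uses the compatibility of $A_{s_i,V}$ with its $U_{q_i}(\mathfrak{sl}_2)$ counterpart, which the paper also invokes (in the proof of Lemma \ref{beta}). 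The Tits/Matsumoto reduction to braid moves plus deletions of $s_is_i$ is sound, and it is worth noting why it is confined to $V[0]$: in a word-product applied to $v\in V[0]$ every intermediate operator again acts on $V[0]$ (since $A_{w,V}(\lambda)$ maps $V[\nu]$ to $V[w\nu]$), so the square identity is only ever needed there; on $V[\nu]$ with $\nu\neq 0$ the square is a nontrivial scalar, which is exactly why the length-additivity hypothesis cannot be dropped in part (1).
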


For $U_{q}(\mathfrak{sl}_2)$ and $V$ a simple finite dimensional module, $V[0]$ is either $0$ (if $V=L_{2m+1}$) or one dimensional (if $V=L_{2m}$). In the latter case, the operators $A_{V}(\lambda)$ restricted to $V[0]$ are just rational functions of $q^{\lambda}$ times the identity operator on $V[0]$. We can calculate them explicitly:
\begin{lemma}\label{explicitA}
For $U_{q}(\mathfrak{sl}_2)$, $V=L_{2m}$, and $s$ the nontrivial element of the Weyl group $W=\mathbb{Z}_{2}$,
 $$A_{s,V}(\lambda)=(-1)^m\prod_{j=1}^{m}\frac{[\lambda+1+j]_{q}}{[\lambda+1-j]_{q}} \mathrm{id}_{V[0]}.$$
 \end{lemma}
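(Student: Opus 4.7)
The plan is a direct rank-one computation. Since $V[0]$ is one-dimensional, $A_{s,V}(\lambda)|_{V[0]}$ acts by a scalar, and by the defining relation of the dynamical Weyl group it suffices to construct explicitly the intertwiner $\Phi^{v}_{\lambda}:M_{\lambda}\to M_{\lambda}\otimes V$ with expectation value a chosen generator $v\in V[0]$, apply it to the distinguished singular vector $m^{\lambda}_{s\cdot\lambda}$ (which is proportional to $F^{\lambda+1}m_{\lambda}$), and read off the coefficient of $m^{\lambda}_{s\cdot\lambda}\otimes v$ in the expansion.

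First I would fix a weight basis $v_{0},v_{1},\ldots,v_{2m}$ of $V=L_{2m}$ with $v_{k}\in V[2m-2k]$ and $Fv_{k}=v_{k+1}$, so that the $U_{q}(\mathfrak{sl}_{2})$-relations force $Ev_{k}=[k]_{q}[2m-k+1]_{q}v_{k-1}$; set $v:=v_{m}$. Weight considerations force the shape
$$\Phi^{v}_{\lambda}(m_{\lambda})=\sum_{k=0}^{m}c_{k}(\lambda)\,F^{k}m_{\lambda}\otimes v_{m-k},\qquad c_{0}(\lambda)=1,$$
and imposing the singularity condition $\Delta(E)\Phi^{v}_{\lambda}(m_{\lambda})=0$ (using $\Delta(E)=E\otimes q^{h}+1\otimes E$ and the standard identity $EF^{k}m_{\lambda}=[k]_{q}[\lambda-k+1]_{q}F^{k-1}m_{\lambda}$) yields a two-term recursion
$$c_{k+1}(\lambda)=-\frac{[m-k]_{q}[m+k+1]_{q}}{q^{2(k+1)}[k+1]_{q}[\lambda-k]_{q}}\,c_{k}(\lambda),$$
whose explicit solution is a closed-form product of $q$-integers in $\lambda$, $k$, and $m$.

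Next, by the intertwining property $\Phi^{v}_{\lambda}(F^{\lambda+1}m_{\lambda})=\Delta(F)^{\lambda+1}\Phi^{v}_{\lambda}(m_{\lambda})$. Since $F\otimes 1$ and $q^{-h}\otimes F$ satisfy $(q^{-h}\otimes F)(F\otimes 1)=q^{2}(F\otimes 1)(q^{-h}\otimes F)$ on weight vectors, the quantum binomial theorem expands $\Delta(F)^{\lambda+1}$ as a $q^{2}$-binomial sum. Reading off the coefficient of $F^{\lambda+1}m_{\lambda}\otimes v_{m}$ and normalizing by the scalar relating $m^\lambda_{s\cdot\lambda}$ to $F^{\lambda+1}m_\lambda$ presents $A_{s,V}(\lambda)$ as an explicit finite sum of the $c_{j}(\lambda)$'s weighted by $q^{2}$-binomial coefficients and powers of $q$. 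The main obstacle is now purely combinatorial: after substituting the explicit form of $c_{j}(\lambda)$, one must verify that this sum collapses to $(-1)^{m}\prod_{j=1}^{m}[\lambda+1+j]_{q}/[\lambda+1-j]_{q}$. This amounts to a terminating $q$-hypergeometric identity of $q$-Chu--Vandermonde / $q$-Saalsch\"utz type, which can be handled either by induction on $m$ or by direct manipulation of $q$-factorials.
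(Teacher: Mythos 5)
Your proposal is correct in outline but takes a genuinely different route from the paper: the paper disposes of this lemma in a single line, citing Corollary 8(iii) and Proposition 12 of \cite{EV1}, where the rank-one formula for $A_{s,V}(\lambda)$ is already established. You instead rederive it from the definition of the dynamical Weyl group: construct $\Phi^{v}_{\lambda}$ on the Verma module via the two-term recursion for its coefficients, push it through the singular vector $m^{\lambda}_{s\cdot\lambda}\propto F^{\lambda+1}m_{\lambda}$ using the $q^{2}$-binomial expansion of $\Delta(F)^{\lambda+1}$, and read off the coefficient of $F^{\lambda+1}m_{\lambda}\otimes v_{m}$. The ingredients check out: your recursion for $c_{k+1}/c_{k}$ is exactly what the relations $EF^{k}m_{\lambda}=[k]_{q}[\lambda-k+1]_{q}F^{k-1}m_{\lambda}$ and $Ev_{k}=[k]_{q}[2m-k+1]_{q}v_{k-1}$ together with $\Delta(E)=E\otimes q^{h}+1\otimes E$ produce, and in the case $m=1$ the final sum is $1+c_{1}(\lambda)\,q^{2}[\lambda+1]_{q}=1-[2]_{q}[\lambda+1]_{q}/[\lambda]_{q}=-[\lambda+2]_{q}/[\lambda]_{q}$, as required. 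What your approach buys is a self-contained, citation-free proof; what it costs is that essentially all of the content is concentrated in the terminating $q$-hypergeometric summation at the end, which you assert is of $q$-Chu--Vandermonde/$q$-Saalsch\"utz type but do not carry out --- since that collapse is the only nontrivial step, a complete write-up must either do the induction on $m$ or identify the precise basic hypergeometric evaluation being invoked. Two smaller points worth making explicit: first, $A_{s,V}(\lambda)$ is defined for large dominant integral $\lambda$ and then extended as a rational function of $q^{\lambda}$, so you should note that computing the scalar at infinitely many integers determines the rational function, which is what legitimizes working only with $m^{\lambda}_{s\cdot\lambda}=F^{\lambda+1}m_{\lambda}/[\lambda+1]_{q}!$ (and note that the two normalizing factorials cancel since $\nu=0$); second, the terms of $\Delta(F)^{\lambda+1}\Phi^{v}_{\lambda}(m_{\lambda})$ whose first factor has weight strictly above $s\cdot\lambda$ must cancel for the defining relation to apply --- this is the well-definedness statement quoted from \cite{EV1}, and it is harmless to your argument only because you extract the single coefficient at weight exactly $s\cdot\lambda$.
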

\begin{proof}
Follows directly from Corollary 8 (iii) and Proposition 12 in \cite{EV1}. 
\end{proof}

One can now define two actions of the dynamical Weyl group on rational functions of $q^{\lambda}$ with values in $V[0]$:
\begin{definition}
\begin{enumerate}
\item The shifted action is given by $$(w\circ f)(\lambda)=A_{w,V}(w^{-1}\cdot \lambda)f(w^{-1}\cdot \lambda).$$
\item Define $\mathcal{A}_{w,V}(\lambda)=A_{w,V}(-\lambda-\rho).$
\item The unshifted action is given by $$(w * f)(\lambda)=\mathcal{A}_{w,V}(w^{-1}\lambda)f(w^{-1}\lambda).$$
\end{enumerate}
\end{definition}

\begin{corollary}\label{calA}
Restricted to $V[0]$, the operators $\mathcal{A}_{w,V}(\lambda):V[0]\to V[0]$ satisfy $$\mathcal{A}_{w_1w_2,V}(\lambda)= \mathcal{A}_{w_1,V}(w_2 \lambda) \mathcal{A}_{w_2,V}(\lambda).$$ 
\end{corollary}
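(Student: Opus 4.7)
The plan is to derive this directly from Proposition \ref{isitarep}(2), which already gives the cocycle identity for the shifted operators $A_{w,V}(\lambda)$ on the zero weight space without any length restriction on $w_{1},w_{2}$. The only thing to check is that substituting $\lambda \mapsto -\lambda-\rho$ intertwines the shifted action of $W$ on the argument with the unshifted one, so that the identity for $A$ translates cleanly into the identity for $\mathcal{A}$.

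Concretely, I would start from the definition $\mathcal{A}_{w,V}(\mu)=A_{w,V}(-\mu-\rho)$ and apply Proposition \ref{isitarep}(2) at the point $-\lambda-\rho\in\mathfrak{h}^{*}$ to obtain
\[
\mathcal{A}_{w_{1}w_{2},V}(\lambda)=A_{w_{1}w_{2},V}(-\lambda-\rho)=A_{w_{1},V}\bigl(w_{2}\cdot(-\lambda-\rho)\bigr)\,A_{w_{2},V}(-\lambda-\rho).
\]
The key computation is then the identity
\[
w_{2}\cdot(-\lambda-\rho)=w_{2}\bigl((-\lambda-\rho)+\rho\bigr)-\rho=-w_{2}\lambda-\rho,
\]
so that the shifted action of $w_{2}$ on the point $-\lambda-\rho$ is precisely the $-\rho$-shift of the unshifted action of $w_{2}$ on $\lambda$. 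Plugging this in, the first factor becomes $A_{w_{1},V}(-w_{2}\lambda-\rho)=\mathcal{A}_{w_{1},V}(w_{2}\lambda)$ and the second factor is $\mathcal{A}_{w_{2},V}(\lambda)$ by definition, which yields exactly the claimed cocycle identity.

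No serious obstacle is expected here, since all the hard work, namely the length-free cocycle identity for $A_{w,V}$ on the zero weight space, is already contained in Proposition \ref{isitarep}(2); the argument is just a change of variables turning the $\rho$-shifted action into the ordinary linear action of $W$ on $\mathfrak{h}^{*}$. I would also briefly note that both sides are rational in $q^{\lambda}$ and agree on the Zariski-dense set where $A_{w_{i},V}$ are regular, so the identity holds as an identity of rational functions.
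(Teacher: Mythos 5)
Your proof is correct, and it is precisely the argument the paper intends (the corollary is stated without proof as an immediate consequence of Proposition \ref{isitarep}(2)): the substitution $\lambda\mapsto-\lambda-\rho$ together with the identity $w_{2}\cdot(-\lambda-\rho)=-w_{2}\lambda-\rho$ converts the shifted cocycle identity for $A_{w,V}$ on $V[0]$ into the unshifted one for $\mathcal{A}_{w,V}$.
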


\begin{remark}
In general, the shifted and the unshifted action are defined for rational functions with values in $V$. Because of Proposition \ref{isitarep}, in that case they don't define a representation of the Weyl group $W$, but define an action of a braid group of $W$. However, we will need them only for functions with values in $V[0]$, where both actions define a representation of $W$ (again due to Proposition \ref{isitarep}).
\end{remark}

The statement of the main theorem, \ref{main}, refers to the unshifted dynamical action from this definition. Here one must remember that we can use the form $\left<\cdot,\cdot \right>$ to identify $\mathfrak{h}\cong\mathfrak{h}^*$, so this definition of functions on $\mathfrak{h}^*$ can be applied to functions on $\mathfrak{h}$. With that identification, the part of the theorem ``$f\in O(H)\otimes V$ invariant under the unshifted action of dynamical Weyl group" means that for every $w\in W , \,\lambda\in \mathfrak{h}^*$, $$f(q^{2w\lambda})=\mathcal{A}_{w,V}(\lambda)f(q^{2\lambda}).$$

To prove the dynamical Weyl group invariance, we need to invoke several more definitions and results form \cite{EV1} and \cite{EV2}.

Remember that for $\mu$ large dominant and $v\in V[0]$ we defined $\Phi^{v}_{\mu}\in \Hom_{U_{q}(\mathfrak{g})}(M_{\mu},M_{\mu}\otimes V)$ such that $\left< \Phi^{v}_{\mu} \right>=v$, and analogously  $\overline{\Phi}^{v}_{\mu}\in \Hom_{U_{q}(\mathfrak{g})}(L_{\mu},L_{\mu}\otimes V)$ such that $\left< \overline{\Phi}^{v}_{\mu} \right>=v$. We also defined their trace functions. To introduce notation of \cite{EV1}, for $\lambda \in \mathfrak{h}^*$, define 
$$\Psi^{v}(\lambda,\mu)=\Tr|_{M_{\mu}}(\Phi^{v}_{\mu}q^{2\lambda}) \in V[0].$$ 
The functions we are interested in are
$$\Psi^{v}_{\mu}(\lambda)=\Tr|_{L_{\mu}}(\overline{\Phi}^{v}_{\mu}q^{2\lambda}) \in V[0].$$ 
The paper \cite{EV1} also uses generating functions for these trace functions. Pick a basis $v_{i}$ of $V[0]$ and let $v_{i}^*\in V^*[0]$ be the dual basis. Then define the generating functions as
$$\Psi_{V}(\lambda,\mu)=\sum_{i} \Psi^{v_{i}}(\lambda,\mu)\otimes v_{i}^* \in V[0]\otimes V^*[0] \cong \Hom_{\mathbb{C}}(V[0],V[0])$$
$$\Psi^{\mu}_{V}(\lambda)=\sum_{i} \Psi^{v_{i}}_{\mu}(\lambda)\otimes v_{i}^* \in V[0]\otimes V^*[0].$$

We are interested in functions of the type $f(q^{2\lambda})=\Psi^{v}_{\mu}(\lambda)$. More results are available about functions $\Psi^{v}(\lambda,\mu)$. Fortunately, there is a theorem allowing us to translate results of one type to another, analogous to Weyl character formula and proved as Proposition 42 in \cite{EV1}:
\begin{proposition}\label{42}
$\Psi^{v}_{\mu}(\lambda)=\sum_{w\in W}(-1)^{w} \Psi^{v}(\lambda,w\cdot \mu)A_{w,V}(\mu)$.
\end{proposition}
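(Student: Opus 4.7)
The plan is to promote the Weyl character formula from characters to traces twisted by the intertwiner. Classically, $\mathrm{ch}(L_\mu) = \sum_{w\in W}(-1)^{l(w)}\mathrm{ch}(M_{w\cdot\mu})$ follows from the Bernstein--Gelfand--Gelfand resolution $0 \to C_r \to \cdots \to C_1 \to C_0 \to L_\mu \to 0$, with $C_k = \bigoplus_{l(w)=k} M_{w\cdot\mu}$, and this resolution survives in the quantum category $\mathcal{O}$. I would first lift the intertwiner $\overline{\Phi}^v_\mu : L_\mu \to L_\mu \otimes V$ to a chain map $\widetilde{\Phi}^v_{\mu,\bullet} : C_\bullet \to C_\bullet \otimes V$, using that $-\otimes V$ is exact on category $\mathcal{O}$. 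Applying the Euler--Poincar\'e principle to the alternating sum of $V$-valued traces along this exact complex, one obtains
\[
\Psi^v_\mu(\lambda) \;=\; \Tr|_{L_\mu}(\overline{\Phi}^v_\mu\, q^{2\lambda}) \;=\; \sum_{w \in W}(-1)^{l(w)}\,\Tr|_{M_{w\cdot\mu}}\!\bigl(\widetilde{\Phi}^v_\mu\big|_{M_{w\cdot\mu}}\cdot q^{2\lambda}\bigr).
\]

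Next I would identify each restriction $\widetilde{\Phi}^v_\mu|_{M_{w\cdot\mu}}$ with the intertwiner $\Phi^{A_{w,V}(\mu)v}_{w\cdot\mu}: M_{w\cdot\mu} \to M_{w\cdot\mu}\otimes V$. This is exactly what the dynamical Weyl group was built to record: by the very definition of $A_{w,V}(\mu)$, one has $\Phi^v_\mu(m^\mu_{w\cdot\mu}) = m^\mu_{w\cdot\mu}\otimes A_{w,V}(\mu)v + \text{l.o.t.}$, so after passing to the sub-Verma $M_{w\cdot\mu}\hookrightarrow M_\mu$ generated by the singular vector $m^\mu_{w\cdot\mu}$, the expectation value of the induced intertwiner is $A_{w,V}(\mu)v$. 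Since an intertwiner out of a generic Verma module is determined by its expectation value (Lemma \ref{iso}(1)), the identification follows, and its trace contributes exactly $\Psi^{A_{w,V}(\mu)v}(\lambda,w\cdot\mu) = \Psi^v(\lambda,w\cdot\mu)A_{w,V}(\mu)$ on the right-hand side.

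The main obstacle is justifying step two rigorously. The sub-Verma $M_{w\cdot\mu} \subset M_\mu$ is generated by a singular vector, not by a highest-weight vector of $C_k$, so one must check that the lift of $\overline{\Phi}^v_\mu$ restricted to this sub-Verma is independent of the choice of lift modulo contributions that vanish under the trace, and that it really agrees with $\Phi^{A_{w,V}(\mu)v}_{w\cdot\mu}$ (not merely on the top vector). A clean way to do this is first to work with $\mu$ generic so that $M_\mu$ decomposes as a direct sum of sub-Vermas and the identification is literal, then extend to all dominant integral $\mu$ by rationality in $q^\mu$. A secondary technical point is to treat the infinite-dimensional trace $\Tr|_{M_{w\cdot\mu}}(\Phi^v_{w\cdot\mu}\,q^{2\lambda})$ as a formal series in $q^{2\lambda}$, convergent in the appropriate region of $\lambda$, as is done in \cite{EV1}.
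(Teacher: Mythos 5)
First, note that the paper does not actually prove Proposition \ref{42}: it is quoted from \cite{EV1} (Proposition 42), so there is no in-paper argument to compare against. Your outline --- BGG resolution of $L_\mu$ by the Verma modules $M_{w\cdot\mu}$, the Euler--Poincar\'e principle for the $V$-valued traces, and identification of the diagonal blocks via the defining property $\Phi^v_\mu(m^\mu_{w\cdot\mu})=m^\mu_{w\cdot\mu}\otimes A_{w,V}(\mu)v+\textrm{l.o.t.}$ --- is the standard route to such Weyl-character-type formulas for trace functions and is essentially the strategy of \cite{EV1}. You have also correctly isolated the two genuine technical points: only the diagonal blocks of the lifted chain map contribute to the trace, and each diagonal block, being an intertwiner out of a Verma module, is determined by its expectation value (Lemma \ref{iso}(1)), which the definition of $A_{w,V}(\mu)$ pins down to be $A_{w,V}(\mu)v$, giving the contribution $\Psi^{A_{w,V}(\mu)v}(\lambda,w\cdot\mu)=\Psi^v(\lambda,w\cdot\mu)A_{w,V}(\mu)$.

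The patch you propose for closing the gap, however, does not work as stated. For generic $\mu$ the Verma module $M_\mu$ is irreducible, so it contains no proper sub-Vermas at all; and for $\mu$ dominant integral it is indecomposable (generated by $m_\mu$), so it is never a direct sum of sub-Vermas --- the submodules generated by the $m^\mu_{w\cdot\mu}$ only give a filtration, consistent with the fact that $\mathrm{ch}\,L_\mu$ is an alternating sum, not a sum, of the $\mathrm{ch}\,M_{w\cdot\mu}$. Moreover, the left-hand side $\Psi^v_\mu(\lambda)=\Tr|_{L_\mu}(\overline{\Phi}^v_\mu q^{2\lambda})$ is defined only for $\mu\in P_+$ and is not a rational function of $q^\mu$ (the number of monomials grows with $\mu$), so ``extend by rationality in $q^\mu$'' cannot be applied to the identity as a whole. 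The gap must instead be closed directly for dominant integral $\mu$: one checks that off-diagonal blocks of the chain map have vanishing trace because they shift the summand of $C_k$, and that the expectation value of each diagonal block agrees with $A_{w,V}(\mu)v$ because the projection of $\Phi^v_\mu(m^\mu_{w\cdot\mu})$ onto the top weight space of the relevant summand is exactly what the definition of the dynamical Weyl group records. You should also justify the existence of the chain map lift, since Verma modules are not projective in category $\mathcal{O}$; the usual device is to construct it degree by degree using that a morphism out of $C_k=\bigoplus_{l(w)=k}M_{w\cdot\mu}$ is determined by singular vectors of the weights $w\cdot\mu$, rather than appealing only to exactness of $-\otimes V$.
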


Let $\delta_{q}(\lambda)$ be the Weyl denominator $\delta_{q}(\lambda)=\sum_{w\in W}(-1)^{w}q^{2\left< \lambda,w\rho \right>}$. It satisfies 
\begin{lemma}\label{sign}
$\delta_{q}(w\lambda)=(-1)^w\delta_{q}(\lambda)$.
\end{lemma}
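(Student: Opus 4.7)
The plan is to prove this by a direct change of variable in the sum defining $\delta_q$, using that $W$ acts on $\mathfrak{h}^*$ by isometries of the form $\langle \cdot , \cdot \rangle$.

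First I would apply the definition to $w\lambda$ (renaming the summation index to avoid clashing with the external $w$): if $\sigma \in W$, then
$$\delta_q(\sigma \lambda) = \sum_{u \in W} (-1)^u q^{2\langle \sigma \lambda, u \rho\rangle}.$$
Since $W$ preserves the bilinear form, $\langle \sigma \lambda, u \rho \rangle = \langle \lambda, \sigma^{-1} u \rho\rangle$, so
$$\delta_q(\sigma \lambda) = \sum_{u \in W} (-1)^u q^{2\langle \lambda, \sigma^{-1} u \rho \rangle}.$$

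Next I would re-index the sum by setting $v = \sigma^{-1} u$, so $u = \sigma v$ and $(-1)^u = (-1)^\sigma (-1)^v$ because the sign character is a homomorphism $W \to \{\pm 1\}$. Since $u \mapsto \sigma^{-1} u$ is a bijection of $W$, the substitution gives
$$\delta_q(\sigma \lambda) = (-1)^\sigma \sum_{v \in W} (-1)^v q^{2\langle \lambda, v \rho\rangle} = (-1)^\sigma \delta_q(\lambda),$$
which is the claim (with $\sigma$ renamed back to $w$).

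There is no real obstacle; the only thing to be careful about is keeping the two roles of ``$w$'' (the external Weyl group element vs.\ the summation index) notationally distinct, and citing the multiplicativity of the sign character together with the $W$-invariance of $\langle \cdot , \cdot \rangle$ (which is built into the realization of the Cartan datum recalled in Section~\ref{section2}).
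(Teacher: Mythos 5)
Your proof is correct and is exactly the argument the paper has in mind: the paper's own proof is the one-line remark that the claim ``follows directly from the $W$-invariance of the form $\left<\cdot,\cdot\right>$,'' and your re-indexing $u=\sigma v$ together with multiplicativity of the sign character is precisely the standard way to spell that out. No issues.
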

\begin{proof} It follows directly from the $W$-invariance of the form $\left< \cdot, \cdot \right>$.
\end{proof}

For finite dimensional $U_{q}(\mathfrak{g})$ modules $U,V$, define the fusion matrix $J_{UV}(\lambda):U\otimes V\to U\otimes V$ as follows. For generic $\lambda$ and $v\in V[\mu], u\in U[\nu]$, it is an operator such that $$(\Phi^{u}_{\lambda-\mu}\otimes 1)\circ \Phi^{v}_{\lambda}=\Phi^{J_{UV}(\lambda)(u\otimes v) }_{\lambda}.$$ It is a rational function of $q^{\lambda}$, and an invertible operator (see \cite{EV1}, Section 2.6).

If $J_{U,{^*U}}(\lambda)=\sum_{i} c_{i}\otimes c_{i}' $, with $c_{i}\in \End(U), c_{i}'\in \End({^*U})$, define $Q_{U}(\lambda)=\sum_{i}(c_{i}')^*c_{i}\in \End(U)$ (see \cite{EV2}). Use these to define the renormalized trace functions $$F_{V}(\lambda,\mu)=\delta_{q}(\lambda)\Psi_{V}(\lambda,-\mu-\rho)Q_{V}^{-1}(-\mu-\rho).$$ These satisfy (see Proposition 45 in \cite{EV1}):
\begin{proposition}\label{45}
$F_{V}(\lambda,\mu)=\big( \mathcal{A}_{w,V}(w^{-1}\lambda)\otimes \mathcal{A}_{w,V^*}(w^{-1}\mu)\big) F_{V}(w^{-1}\lambda,w^{-1}\mu)$.
\end{proposition}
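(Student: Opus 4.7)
The strategy is to transfer a natural $W$-symmetry of trace functions on finite-dimensional irreducible modules to the Verma-module trace functions via the Weyl-character-formula-type identity of Proposition \ref{42}, and then to absorb the alternating signs and the normalizing operator into $\delta_q$ and $Q_V$ to produce the clean unsigned transformation law in the statement.

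First I would establish, on the irreducible side, that the finite-dimensional trace function $\Psi_V^\mu(\lambda)$ satisfies a twisted $W$-invariance in $\lambda$ of the shape $\Psi_V^\mu(w\lambda) = \mathcal{A}_{w,V}(\lambda)\,\Psi_V^\mu(\lambda)$ on $V[0]$. The underlying reason is that the character of $L_\mu$ is $W$-invariant, while the dynamical Weyl group operators $A_{w,V}$ and $\mathcal{A}_{w,V}$ were introduced in Section \ref{dyn} precisely to record how the expectation value of an intertwiner transforms when one applies a singular vector of weight $w\cdot\mu$ instead of the highest weight vector $m_\mu$. Substituting this into Proposition \ref{42} and rearranging expresses $\Psi_V(\lambda,-\mu-\rho)$ as an alternating sum over $W$, and the Weyl denominator $\delta_q(\lambda)$ that appears in the definition of $F_V$ supplies, via Lemma \ref{sign}, exactly the $(-1)^w$ needed to convert the signed identity into the unsigned symmetry in $\lambda$ claimed for $F_V$.

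Second, for the symmetry in the $\mu$-variable, the role of the normalization $Q_V^{-1}(-\mu-\rho)$ is essential. The operator $Q_V(\lambda)$ is built from the fusion matrix $J_{V,{^*V}}(\lambda)$, and one shows by analyzing this fusion matrix that $Q_V(\lambda)$ transforms under $\lambda \mapsto w\lambda$ by a factor involving the dynamical Weyl group on the dual module ${^*V}$. After the $-\rho$ shift in the argument (which converts the shifted dot action into the unshifted action), this transformation produces precisely the factor $\mathcal{A}_{w,V^*}(w^{-1}\mu)$ on the second tensor factor of $F_V$, and Corollary \ref{calA} ensures that these factors assemble consistently into a genuine $W$-action.

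The main obstacle I expect is verifying the transformation law of $Q_V$ itself under $W$: this requires an explicit description of the fusion matrix (for instance via the ABRR-type presentation as an infinite product of $R$-matrix factors) together with careful shift bookkeeping between $A_{w,V}(\lambda)$ and $\mathcal{A}_{w,V}(\lambda) = A_{w,V}(-\lambda-\rho)$. Once this ingredient is in place, the final formula emerges by combining the first-variable symmetry derived from Proposition \ref{42} and Lemma \ref{sign} with the second-variable symmetry of $Q_V^{-1}(-\mu-\rho)$, and checking that the arguments of the two dynamical Weyl group operators specialize to $w^{-1}\lambda$ and $w^{-1}\mu$ as stated.
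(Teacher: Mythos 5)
There is a genuine gap here, and it is structural rather than a matter of missing detail. The paper does not prove Proposition \ref{45} at all: it is imported verbatim from \cite{EV1} (Proposition 45 there), and in the paper's logical order it is an \emph{input} to Lemma \ref{2)}, which establishes exactly the twisted invariance $\Psi^{v}_{\mu}(w\lambda)=\mathcal{A}_{w,V}(\lambda)\Psi^{v}_{\mu}(\lambda)$ that you take as your starting point. Your first step asserts this invariance of the finite-dimensional trace functions on the grounds that ``the character of $L_\mu$ is $W$-invariant,'' but $\Psi^{v}_{\mu}(\lambda)=\Tr|_{L_\mu}(\overline{\Phi}^{v}_{\mu}q^{2\lambda})$ is a trace with an intertwiner inserted, not a character, and its transformation under $W$ is precisely the nontrivial content being established; within the framework of this paper your argument is circular, and you supply no independent proof of that invariance.

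There are two further obstructions even granting the first step. Proposition \ref{42} expresses $\Psi^{v}_{\mu}(\lambda)$ as a single alternating sum of the $|W|$ functions $\Psi^{v}(\lambda,w\cdot\mu)$; it cannot simply be ``rearranged'' to isolate an individual $\Psi_{V}(\lambda,-\mu-\rho)$, and one linear relation among $|W|$ unknowns cannot yield the pairwise relations between $\Psi_{V}(\lambda,\mu)$ and $\Psi_{V}(\lambda,w^{-1}\mu)$ that the $\mu$-variable half of Proposition \ref{45} encodes. That second tensor factor $\mathcal{A}_{w,V^*}(w^{-1}\mu)$ reflects a genuine symmetry of the \emph{Verma-module} trace functions under change of highest weight, which in \cite{EV1} is proved by analyzing how $\Phi^{v}_{\mu}$ restricts to the submodule generated by the singular vector $m^{\mu}_{w\cdot\mu}\in M_{\mu}$ --- this is where the operators $A_{w,V}$ actually enter --- together with Proposition \ref{20} relating $A_{w,V^*}^*$ to $Q_V$. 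You correctly flag the transformation law of $Q_V$ as ``the main obstacle,'' but that, together with the $\mu$-shift analysis of the Verma intertwiners, \emph{is} the proof; deferring it leaves the proposal without its essential content. If you want a self-contained argument, the workable direction is the one in \cite{EV1}: establish the $\mu$-symmetry of $\Psi_V(\lambda,\mu)$ directly from the intertwiner construction, then renormalize by $\delta_q$ and $Q_V^{-1}$, rather than trying to descend from the irreducible side.
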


These operators appear in many formulas because they transform the action of operators $A_{w,V}(\lambda)$ on the space $V$ and its duals. One of these, a special case of Proposition 20 in \cite{EV1}, is the following proposition:
\begin{proposition} \label{20}
When restricted to $V[0]$, $A_{w,V^*}(\lambda)^*=Q_{V}(\lambda)A_{w,V}(\lambda)^{-1}Q_{V}(w\cdot \lambda)^{-1}.$
\end{proposition}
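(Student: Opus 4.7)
The plan is to deduce the identity by comparing two decompositions of intertwining operators $M_\lambda \to M_\lambda \otimes V^* \otimes V$. For $v \in V[0]$ and $\varphi \in V^*[0]$, let $\Phi^{\varphi \otimes v}_\lambda$ be the Verma intertwiner to $M_\lambda \otimes V^* \otimes V$ with expectation value $\varphi \otimes v$. By the fusion matrix definition recalled from \cite{EV1}, this operator equals the composition $(\Phi^{\tilde \varphi}_\lambda \otimes 1) \circ \Phi^{\tilde v}_\lambda$, where $\tilde \varphi \otimes \tilde v = J_{V^*, V}(\lambda)^{-1}(\varphi \otimes v)$. Evaluating both expressions on the Shapovalov singular vector $m^\lambda_{w \cdot \lambda}$ and extracting the leading term yields a multiplicative identity of the shape
$$A_{w, V^* \otimes V}(\lambda) = J_{V^*, V}(w \cdot \lambda)^{-1} \bigl(A_{w, V^*}(\lambda) \otimes A_{w, V}(\lambda)\bigr) J_{V^*, V}(\lambda),$$
expressing how the dynamical Weyl group on a tensor product decomposes after conjugation by the fusion matrix. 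The inner $J$ appears from rewriting $\varphi \otimes v$, while the outer $J$ appears after the singular vector has been pushed to weight $w \cdot \lambda$.

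The second step is to restrict to the invariant line $(V^* \otimes V)^{U_q(\mathfrak{g})}$, which sits inside the zero weight space, and apply the canonical evaluation pairing. Since $\End_{U_q(\mathfrak{g})}(M_\lambda) = \mathbb{C}$ for generic $\lambda$, the contracted intertwiner $M_\lambda \to M_\lambda$ is scalar, and the restriction of $A_{w, V^* \otimes V}(\lambda)$ to the invariant line must also be scalar. The operator $Q_V(\lambda) = \sum_i (c_i')^* c_i$, built from $J_{V, {}^* V}(\lambda) = \sum_i c_i \otimes c_i'$, appears precisely because contracting the fusion matrix against the evaluation pairing rearranges the tensor factors and applies the dual to one slot. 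Converting the multiplicative identity into a scalar identity after this contraction yields the claimed equation $A_{w, V^*}(\lambda)^* = Q_V(\lambda) A_{w, V}(\lambda)^{-1} Q_V(w \cdot \lambda)^{-1}$, once one identifies the evaluation of an operator on $V^* \otimes V$ against the invariant vector with the adjoint operation on $\End(V[0])$.

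The main obstacle will be careful book-keeping for left duals: because ${}^*V$ is defined using $S^{-1}$, the natural pairings $V \otimes V^* \to \mathbb{C}$ and $V^* \otimes V \to \mathbb{C}$ are not symmetric, and this is exactly the asymmetry encoded by $Q_V$ rather than by a simpler conjugation. A second delicate point is tracking the two different arguments $\lambda$ and $w \cdot \lambda$ in the two $Q_V$ factors of the conclusion: the inner $Q_V(w \cdot \lambda)^{-1}$ arises from fusion at the shifted parameter after the first intertwiner has acted, while the outer $Q_V(\lambda)$ comes from fusion at $\lambda$ before any shift. Once both sources of non-cocommutativity are correctly pinned down, the formula follows from the multiplicative identity above together with the invertibility of $A_{w, V}(\lambda)$ on $V[0]$ guaranteed by Proposition \ref{isitarep}.
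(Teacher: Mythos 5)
The paper does not prove this proposition at all: it is quoted verbatim as ``a special case of Proposition 20 in \cite{EV1}'', so there is no internal argument to compare against. Your sketch is, in outline, a faithful reconstruction of how such identities are established in \cite{EV1}: multiplicativity of the dynamical Weyl group operators under fusion (conjugation by $J$), composition with the evaluation morphism, and the fact that $\End_{U_q(\mathfrak{g})}(M_\lambda)=\mathbb{C}$ forces the contracted intertwiner to be scalar, whence $\mathrm{ev}\circ A_{w,\,\cdot\,\otimes\,\cdot}(\lambda)=\mathrm{ev}$ on the trivial constituent. That is the right strategy and it does produce an identity of exactly the claimed shape $A^*\,\widetilde Q(w\cdot\lambda)\,A_{w,V}(\lambda)=\widetilde Q(\lambda)$ on $V[0]$.

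Two concrete cautions before you call this a proof. First, the multiplicativity identity in \cite{EV1} carries a weight shift in the first tensor slot, $A_{w,U}(\lambda-\mathrm{wt}(v))\otimes A_{w,V}(\lambda)$; you suppress it. This is harmless only because you may evaluate the identity on vectors of the form $J(\lambda)^{-1}(\varphi\otimes v)$ with $\varphi,v$ both of weight zero, so that the shifted operator is applied only on the weight-$(0,0)$ component -- but note that the canonical invariant element of $V^*\otimes V$ is \emph{not} concentrated in $V^*[0]\otimes V[0]$, so ``restricting to the invariant line'' and ``restricting to $V^*[0]\otimes V[0]$'' are not interchangeable; the evaluation-contraction route you also describe is the one that works. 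Second, the operator that falls out of your contraction is $\mathrm{ev}\circ J^{-1}$ (or $\mathrm{ev}\circ J$ at the other argument), built from $J_{{}^*V,V}$ or $J_{V^*,V}$, whereas the paper's $Q_V(\lambda)$ is defined from $J_{V,{}^*V}(\lambda)=\sum_i c_i\otimes c_i'$ via $\sum_i (c_i')^*c_i$. Identifying your $\widetilde Q$ with this $Q_V$ is not pure notation: it requires the inversion and duality relations among fusion matrices (separate statements in \cite{EV1}), and getting $Q_V(\lambda)$ on the left and $Q_V(w\cdot\lambda)^{-1}$ on the right, rather than some mixture of $Q_{V^*}$ and $Q_V^{-1}$, is exactly where that bookkeeping bites. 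You flag this yourself, correctly, as the main obstacle; until it is carried out the argument is a plan rather than a proof, but the plan is the right one.
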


\label{terribledef}

\section{Proof of Theorem \ref{main}}
\label{proof}

As we identified $(O_{q}(G)\otimes V)^{U_{q}(\mathfrak{g})}\cong \bigoplus_{\mu \in P_{+}}\Hom_{U_{q}(\mathfrak{g})}(L_{\mu},L_{\mu}\otimes V)$ and are thus interested in the map $\Res: \Hom_{U_{q}(\mathfrak{g})}(L_{\mu},L_{\mu}\otimes V)\to O(H)\otimes V$, all the claims can be stated and proved in this language of traces of intertwining operators. The main Theorem \ref{main} can be restated in this language as the following theorem, analogous to Corollary \ref{KNV3}.

\begin{theorem}\label{proving}
For any intertwining operator $\Phi\in \Hom_{U_{q}(\mathfrak{g})}(L_{\mu},L_{\mu} \otimes V)$ define its weighted trace  as a function $\Psi \in O(H)\otimes V$ given by $\Psi(x)=\Tr _{L_{\mu}}(\Phi \circ x)$. Then the map $$\Res: \bigoplus_{\mu\in P_{+}}\Hom_{U_{q}(\mathfrak{g})}(L_{\mu},L_{\mu} \otimes V)\to  O(H)\otimes V$$ given by $\Res \Phi=\Psi$ is injective, and its image consists of all the functions $f\in O(H)\otimes V$ that satisfy
 \begin{enumerate}
\item $f\in O(H)\otimes V[0]$;
\item $f$ is invariant under the (unshifted) action of the dynamical Weyl group, meaning that for all $w\in W, \, \lambda \in \mathfrak{h}^*$, $$f(q^{2w\lambda})=\mathcal{A}_{w,V}(\lambda)f(q^{2\lambda});$$
\item for every $\alpha_i\in \Pi$ and every $n\in \mathbb{N}$, the polynomial $E_{i}^n .f $ is divisible by $$(1-q_{i}^2e^{\alpha_i})(1-q_{i}^4e^{\alpha_i})\ldots (1-q_{i}^{2n}e^{\alpha_i}).$$ 
\end{enumerate}
\end{theorem}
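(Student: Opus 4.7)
The plan is to split the proof into four parts: injectivity, necessity of conditions (1) and (3), necessity of condition (2), and sufficiency. The first three are computational and build directly on the theory in Section \ref{dyn}; sufficiency is the main obstacle.

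\emph{Injectivity and conditions (1), (3).} For injectivity, I would expand $\Psi_\mu^v(q^{2\lambda}) = \sum_\eta c_\eta q^{2\eta(\lambda)}$ with $\eta$ ranging over weights of $L_\mu$. The top term in the dominance order is $q^{2\mu(\lambda)}v$, where $v = \left\langle \overline{\Phi}_\mu^v \right\rangle \in V[0]$ is the expectation value. Since distinct $L_\mu$ have distinct highest weights and the expectation value map is injective on $\Hom_{U_q(\mathfrak{g})}(L_\mu, L_\mu \otimes V)$ by Lemma \ref{iso}(1), a nontrivial linear combination of trace functions cannot vanish. For (1), if $\Phi(l_\eta) = \sum_i l_i \otimes v_i$ in weight bases then $U_q(\mathfrak{g})$-equivariance forces $\mathrm{wt}(l_i)+\mathrm{wt}(v_i)=\eta$, so the trace against $x\in H$ keeps only the diagonal terms with $l_i \propto l_\eta$, giving $v_i\in V[0]$. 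For (3), I would mimic the computation in the proof of Theorem \ref{KNV2}, using the $q$-deformed coproduct $\Delta(E_i^n)$ together with the intertwining identity $\Phi \circ E_i = \Delta(E_i) \circ \Phi$: each factor $1-q_i^{2k}e^{\alpha_i}$ should emerge from pairing the action of $E_i$ on $V$ with the eigenvalue of $q_i^{h_i}$ on the appropriate weight space of $L_\mu$, producing the product structure after $n$ iterations.

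\emph{Condition (2).} I would deduce the unshifted dynamical Weyl invariance from the $W$-covariance of the renormalized trace $F_V(\lambda,\mu)=\delta_q(\lambda)\Psi_V(\lambda,-\mu-\rho)Q_V^{-1}(-\mu-\rho)$ supplied by Proposition \ref{45}. Combining this with Lemma \ref{sign} (sign of $\delta_q$ under $W$), Proposition \ref{42} (Weyl-character-formula relation between Verma traces $\Psi^v(\lambda,\mu)$ and irreducible traces $\Psi_\mu^v(\lambda)$), and Proposition \ref{20} (relating $A_{w,V^*}$ to $A_{w,V}$ via $Q_V$) should yield, after the required bookkeeping, the identity $\Psi_\mu^v(w\lambda)=\mathcal{A}_{w,V}(\lambda)\Psi_\mu^v(\lambda)$, which is exactly the invariance asserted in (2).

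\emph{Sufficiency; the main obstacle.} The classical argument used Lemma \ref{mala} on the algebraic variety $G$, and this is unavailable in the quantum setting. My approach would instead be to show that the space $\mathcal{S}$ of functions in $O(H)\otimes V[0]$ satisfying (2) and (3) admits a direct sum decomposition $\mathcal{S}=\bigoplus_{\mu\in P_+}\mathcal{S}_\mu$ matching the Peter--Weyl-type decomposition of $\bigoplus_\mu\Hom_{U_q(\mathfrak{g})}(L_\mu,L_\mu\otimes V)$. Concretely, expand $f\in\mathcal{S}$ along characters of $H$ and use (2) to bundle these exponentials into $W$-alternating packets; condition (3) should be exactly what controls these packets near the walls where the operators $A_{w,V}$ develop poles. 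The $\mu$-packet should then be recognized as the Weyl-character-formula expression for $\Psi_\mu^v$ coming from Proposition \ref{42}, with coefficients determined by $A_{w,V}(\mu)$. For sufficiently dominant $\mu$, Lemma \ref{iso}(1) guarantees that $\mathcal{S}_\mu$ is realized by a trace function; for small $\mu$ one must check that conditions (2)--(3) force $\mathcal{S}_\mu=0$ precisely when $\Hom_{U_q(\mathfrak{g})}(L_\mu,L_\mu\otimes V)=0$. The most delicate step will be matching the divisibility prescribed by (3) with the pole structure of the dynamical Weyl group operators, which I expect to handle by a rank-one reduction using Lemma \ref{explicitA} combined with induction on the length of Weyl group elements via the length-additive composition property of Proposition \ref{isitarep}.
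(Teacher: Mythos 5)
Your treatment of injectivity and of the necessity of conditions (1)--(3) follows the paper's route: the highest-weight/expectation-value argument for injectivity, the weight-diagonal argument for (1), the coproduct computation for (3), and the chain Proposition \ref{45} $\to$ Proposition \ref{20} $\to$ Proposition \ref{42} $\to$ Lemma \ref{sign} for (2) are all exactly what the paper does, and these parts are sound.

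The sufficiency argument, however, has a genuine gap. You propose to decompose the space $\mathcal{S}$ of functions satisfying (1)--(3) as a direct sum $\bigoplus_{\mu}\mathcal{S}_{\mu}$ matching the Peter--Weyl decomposition, by bundling the characters of $H$ appearing in $f$ into ``$W$-alternating packets'' indexed by $\mu$. No such canonical decomposition exists at the level of character expansions: the trace function $\Psi^{v}_{\mu}$ is supported on \emph{all} weights of $L_{\mu}$, not just the $W$-orbit of $\mu$, so the characters contributed by different $\mu$ overlap massively, and condition (2) is invariance under the operators $\mathcal{A}_{w,V}$ (which are nontrivial rational functions of $q^{\lambda}$), not a sign-alternation that would isolate Weyl-character-formula packets. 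The paper's substitute for your decomposition is a \emph{triangular peeling} induction on the weight diagram $\bold{WD}(f)$ (the lattice points of the convex hull of the support): one picks an extremal dominant weight $\mu$ of the support, shows its coefficient $v_{\mu}$ satisfies $E_{i}^{\mu(h_{i})+1}v_{\mu}=0$ for every $i$ so that Lemma \ref{iso}(2) produces an intertwiner $\overline{\Phi}^{v_{\mu}}_{\mu}$, and subtracts its trace to strictly shrink the weight diagram. The step you are missing is precisely how to verify that $E_{i}^{\mu(h_{i})+1}v_{\mu}=0$: the paper gets this by decomposing $f=\sum_{\beta}f_{\beta}$ along cosets $\beta\in P_{+}/\mathbb{Z}\alpha_{i}$ (so each $f_{\beta}$ lives on an $\alpha_{i}$-string), proving each $f_{\beta}$ satisfies (1)--(3) for the rank-one subalgebra $U_{q_{i}}(\mathfrak{sl}_{2})$, and establishing the rank-one case of the theorem by an explicit dimension count (degree-$\le N$ trace functions versus degree-$\le N$ solutions of (1)--(3), both of dimension $N-m+1$) rather than by any packet decomposition. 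You correctly sense that a rank-one reduction via Lemma \ref{explicitA} is needed and that small $\mu$ is the delicate case, but without the string decomposition, the rank-one dimension count, and the extremal-weight induction, the argument does not close.
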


\begin{lemma}\label{1)}
Trace functions $\Psi=\Res \Phi$ satisfy 1), i.e. $\Psi \in O(H)\otimes V[0]$.
\end{lemma}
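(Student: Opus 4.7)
The plan is a direct unwinding of the trace in a weight basis. Fix a basis $\{l_j\}$ of $L_\mu$ consisting of weight vectors, with $l_j \in L_\mu[\nu_j]$, and let $\{\varphi_j\}$ be the dual basis in ${^*L_\mu}$. Then for any $x \in H$,
\[
\Psi(x) \;=\; \Tr|_{L_\mu}(\Phi \circ x) \;=\; \sum_{j} (\varphi_j \otimes \id_V)\bigl(\Phi(x \cdot l_j)\bigr) \;=\; \sum_{j} e^{\nu_j}(x)\,(\varphi_j \otimes \id_V)\bigl(\Phi(l_j)\bigr),
\]
since $x \in H$ acts on $l_j$ by the scalar $e^{\nu_j}(x)$.

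Now use that $\Phi$ is a $U_q(\mathfrak{g})$-intertwiner and hence commutes with the Cartan generators $q^h$; in particular $\Phi$ preserves weights, so $\Phi(l_j) \in (L_\mu \otimes V)[\nu_j]$. Decompose $\Phi(l_j) = \sum_k l_{j,k}' \otimes v_{j,k}'$ into tensors of weight vectors, so that $\mathrm{wt}(l_{j,k}') + \mathrm{wt}(v_{j,k}') = \nu_j$. Because $\varphi_j$ vanishes on every weight space of $L_\mu$ except $L_\mu[\nu_j]$, only the terms with $\mathrm{wt}(l_{j,k}') = \nu_j$ contribute to $(\varphi_j \otimes \id_V)(\Phi(l_j))$, and for those terms $\mathrm{wt}(v_{j,k}') = 0$. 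Consequently $(\varphi_j \otimes \id_V)(\Phi(l_j)) \in V[0]$ for every $j$, and therefore $\Psi(x) \in V[0]$ for all $x \in H$. Finally, the formula above displays $\Psi$ as a finite linear combination of the characters $e^{\nu_j} \in O(H)$ with coefficients in $V[0]$, so $\Psi \in O(H) \otimes V[0]$.

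The argument is essentially a bookkeeping exercise in weights, so there is no real obstacle; the only thing to be careful about is that the weight decomposition is preserved by $\Phi$, which is automatic once one recalls that the torus part of $U_q(\mathfrak{g})$ acts on $L_\mu \otimes V$ via $\Delta(q^h) = q^h \otimes q^h$ and that $\Phi$ commutes with every $q^h$.
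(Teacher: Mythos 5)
Your proof is correct and follows essentially the same route as the paper: compute the trace in a weight basis of $L_\mu$, use that $\Phi$ commutes with the torus action so it preserves weights, and observe that pairing with $\varphi_j$ picks out exactly the tensors whose second factor lies in $V[0]$. The paper's version is just a more compressed statement of this same bookkeeping.
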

\begin{proof}
Let $\Phi \in \Hom_{U_{q}(\mathfrak{g})}(L_{\mu},L_{\mu}\otimes V)$. We can assume we are calculating the trace of $\Phi$ using a basis of weight vectors in $L$. The image of every weight vector $l$ in $L_{\mu}$ under $\Phi$ is going to be a weight vector of the same weight, so when we write it as a sum of elementary tensors and pick the elementary tensor whose first component is $l$, the second component is going to have weight $0$. 
\end{proof}

\begin{lemma} \label{2)}
Trace functions $\Psi=\Res \Phi$ satisfy 2), i.e. for every $w\in W,\, \lambda \in \mathfrak{h}^*$, $$\Psi(q^{2w\lambda})=\mathcal{A}_{w,V}(\lambda)\Psi(q^{2\lambda}).$$
\end{lemma}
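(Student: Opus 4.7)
The plan is to translate the claim into the generating function language of Section \ref{dyn} and then reduce it to the $W$-symmetry of the renormalized trace function. By Lemma \ref{iso}(2), every $\Phi \in \Hom_{U_{q}(\mathfrak{g})}(L_{\mu},L_{\mu}\otimes V)$ has the form $\overline{\Phi}^{v}_{\mu}$ for some $v \in V[0]$, so $\Psi(q^{2\lambda}) = \Psi^{v}_{\mu}(\lambda)$. Since the desired identity is linear in $v$, I would assemble the $\Psi^{v}_{\mu}$ into the generating function $\Psi^{\mu}_{V}(\lambda)\in V[0]\otimes V^{*}[0]\cong \End(V[0])$ and prove
\[
\Psi^{\mu}_{V}(w_{1}\lambda) \;=\; \mathcal{A}_{w_{1},V}(\lambda)\,\Psi^{\mu}_{V}(\lambda)
\]
as endomorphisms of $V[0]$; evaluating at $v$ then yields the lemma.

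First, I would invoke Proposition \ref{42} to express the trace over the irreducible $L_{\mu}$ as an alternating sum over $W$ of traces over Vermas, and then substitute the definition of $F_{V}$ together with $-w\cdot\mu-\rho = -w(\mu+\rho)$ to obtain
\[
\Psi^{\mu}_{V}(\lambda) \;=\; \frac{1}{\delta_{q}(\lambda)}\sum_{w\in W} (-1)^{w}\, F_{V}(\lambda,-w(\mu+\rho))\, Q_{V}(w\cdot\mu)\, A_{w,V}(\mu).
\]
Next I would evaluate at $w_{1}\lambda$: Lemma \ref{sign} produces a factor $(-1)^{w_{1}}$ from the Weyl denominator, and Proposition \ref{45} rewrites each $F_{V}(w_{1}\lambda,\nu)$ as $\mathcal{A}_{w_{1},V}(\lambda)\circ F_{V}(\lambda,w_{1}^{-1}\nu)\circ \mathcal{A}_{w_{1},V^{*}}(w_{1}^{-1}\nu)^{*}$, viewing everything in $\End(V[0])$.

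Reindexing the summation by $w = w_{1}w'$ absorbs the $(-1)^{w_{1}}$ sign and pulls $\mathcal{A}_{w_{1},V}(\lambda)$ outside as a left factor. Using Proposition \ref{isitarep}(2) on $V[0]$ to split $A_{w_{1}w',V}(\mu) = A_{w_{1},V}(w'\cdot\mu)A_{w',V}(\mu)$, and unfolding $\mathcal{A}_{w_{1},V^{*}}(-w'(\mu+\rho)) = A_{w_{1},V^{*}}(w'\cdot\mu)$ from the definition of $\mathcal{A}$, each summand acquires the middle block
\[
A_{w_{1},V^{*}}(w'\cdot\mu)^{*}\; Q_{V}(w_{1}w'\cdot\mu)\; A_{w_{1},V}(w'\cdot\mu).
\]
Proposition \ref{20} with spectral parameter $w'\cdot\mu$ collapses this product telescopically to $Q_{V}(w'\cdot\mu)$. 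What remains on the right is precisely $\delta_{q}(\lambda)\Psi^{\mu}_{V}(\lambda)$, which gives the identity.

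The main obstacle will be bookkeeping rather than conceptual: one must consistently distinguish operators acting on the $V[0]$-factor from those acting on $V^{*}[0]$ under the identification with $\End(V[0])$ (the latter appearing as transposes), and verify that the sign from Lemma \ref{sign} cancels exactly the one introduced by the change of variable $w\mapsto w_{1}w'$. A sanity check is Example \ref{primjer}: for $\mathfrak{g}=\mathfrak{sl}_{2}$ and $V=L_{2m}$, the claim reduces to the antisymmetry $\Psi(e^{-zh})=-\Psi(e^{zh})$ combined with the explicit form of $\mathcal{A}_{s,V}$ from Lemma \ref{explicitA}.
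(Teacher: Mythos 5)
Your proposal is correct and follows essentially the same route as the paper: both rest on Proposition \ref{42}, the $W$-symmetry of $F_V$ (Proposition \ref{45}), Proposition \ref{20} to convert $A_{w,V^*}^*$ into $Q_V$-conjugates, Lemma \ref{sign}, and the cocycle property of the $\mathcal{A}_{w,V}$ on $V[0]$. The only difference is organizational -- the paper first derives the two-variable symmetry of the Verma trace $\Psi_V(\lambda,\mu)$ and then feeds it into the Proposition \ref{42} sum, whereas you substitute $F_V$ into that sum directly and reindex; the telescoping of the middle block to $Q_V(w'\cdot\mu)$ checks out.
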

\begin{proof}
Using the definition of renormalized trace functions from section \ref{terribledef}, Proposition \ref{45}, definition of shifted and unshifted action of dynamical Weyl group, Proposition \ref{20}, defintion of renormalized trace functions again, and finally Lemma \ref{sign}, we get 
\begin{eqnarray*}
\Psi_{V}(\lambda,\mu)&=&\delta_{q}(\lambda)^{-1}F_{V}(\lambda,-\mu-\rho)Q_{V}(\mu)\\
&=&  \delta_{q}(\lambda)^{-1}\mathcal{A}_{w,V}(w^{-1}\lambda)F_{V}(w^{-1}\lambda,w^{-1}(-\mu-\rho))\mathcal{A}_{w,V^*}(w^{-1}(-\mu-\rho))^* Q_{V}(\mu)\\
&=& \delta_{q}(\lambda)^{-1}A_{w,V}(-w^{-1}\lambda-\rho)F_{V}(w^{-1}\lambda,w^{-1}(-\mu-\rho))A_{w,V^*}(w^{-1}\cdot \mu)^* Q_{V}(\mu) \\
&=&  \delta_{q}(\lambda)^{-1}A_{w,V}(-w^{-1}\lambda-\rho)F_{V}(w^{-1}\lambda,-w^{-1}(\mu+\rho))Q_{V}(w^{-1}\cdot \mu)A_{w,V}(w^{-1}\cdot \mu)^{-1} \\
&=&  \delta_{q}(\lambda)^{-1} \delta_{q}(w^{-1}\lambda)A_{w,V}(-w^{-1}\lambda-\rho)\Psi_{V}(w^{-1}\lambda, w^{-1}(\mu+\rho)-\rho)\cdot \\
& & \qquad \qquad \qquad \qquad \cdot Q_{V}(w^{-1}(\mu+\rho)-\rho)^{-1}Q_{V}(w^{-1}\cdot \mu)A_{w,V}(w^{-1}\cdot \mu)^{-1} \\
&=&(-1)^{w} A_{w,V}(-w^{-1}\lambda-\rho)\Psi_{V}(w^{-1}\lambda, w^{-1}\cdot \mu) A_{w,V}(w^{-1}\cdot \mu)^{-1}.
\end{eqnarray*}

As we are interested in traces of intertwining operators on irreducible modules and not on Verma modules, we use Poposition \ref{42} to translate the above identity to those functions:
\begin{eqnarray*}
\Psi^{v}_{\mu}(\lambda)&=&\sum_{w\in W}(-1)^w\Psi_{V}(\lambda,w\cdot \mu)A_{w,V}(\mu)\\
&=&\sum_{w\in W}(-1)^w(-1)^w A_{w,V}(-w^{-1}\lambda-\rho)\Psi_{V}(w^{-1}\lambda, w^{-1}\cdot (w\cdot \mu)) A_{w,V}(w^{-1}(w \cdot \mu))^{-1}  A_{w,V}(\mu)\\
&=&\sum_{w\in W}A_{w,V}(-w^{-1}\lambda-\rho)\Psi_{V}(w^{-1}\lambda, \mu)\\
&=&\sum_{w\in W}\mathcal{A}_{w,V}(w^{-1}\lambda)\Psi_{V}(w^{-1}\lambda, \mu).
\end{eqnarray*}

Finally, we use this and Corollary \ref{calA} to conclude that for any $w'\in W$,
\begin{eqnarray*}
\mathcal{A}_{w',V}(\lambda)\Psi^{v}_{\mu}(\lambda)&=&\sum_{w\in W}\mathcal{A}_{w',V}(\lambda)\mathcal{A}_{w,V}(w^{-1}\lambda)\Psi_{V}(w^{-1}\lambda, \mu)\\
&=&\sum_{w\in W}\mathcal{A}_{w'w,V}(w^{-1}\lambda)\Psi_{V}(w^{-1}\lambda, \mu)\\
&=&\sum_{w\in W}\mathcal{A}_{w,V}(w^{-1} w' \lambda)\Psi_{V}(w^{-1} w' \lambda, \mu)\\
&=& \Psi^{v}_{\mu}(w'\lambda)
\end{eqnarray*}
as required.
This proves the lemma with the above convention $\Psi(q^{2\lambda})=\Tr|_{L_{\mu}}(\Phi \circ q^{2\lambda})=\Psi^{v}_{\mu}(\lambda)$. Notice that we used the fact that $\Phi^{v}_{\mu}$ span the space $\bigoplus_{\mu}\Hom_{U_{q}(\mathfrak{g})}(L_{\mu},L_{\mu}\otimes V)$, so it is enough to prove the invariance for $\Psi^{v}_{\mu}$.
\end{proof}

\begin{lemma}\label{3)}
Trace functions $\Psi$ satisfy 3), i.e.  for every $i=1,\ldots r$ and every $n\in \mathbb{N}$, the polynomial $E_{i}^n .\Psi $ is divisible by $(1-q_{i}^2e^{\alpha_i})(1-q_{i}^4e^{\alpha_i})\ldots (1-q_{i}^{2n}e^{\alpha_i})$.
\end{lemma}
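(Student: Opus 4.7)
My plan is to prove, by induction on $n$, the strengthened identity
\begin{equation*}
E_i^n . \Psi(x) \;=\; e^{-n\alpha_i}(x)\prod_{k=1}^n\bigl(1 - q_i^{2k}e^{\alpha_i}(x)\bigr)\, G_n(x),
\end{equation*}
where $G_n(x) := \Tr|_{L_\mu}\!\bigl((E_i^n \otimes \mathrm{id})\circ \Phi \circ x\bigr) \in O(H)\otimes V[n\alpha_i]$, so that $G_0 = \Psi$. Because $e^{-n\alpha_i}$ is a unit in the group algebra $O(H) = \mathbb{C}[P]$, this identity exhibits $E_i^n.\Psi$ as the required product of factors $(1 - q_i^{2k}e^{\alpha_i})$ times the regular function $e^{-n\alpha_i}G_n$, which gives condition 3).

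The base case $n=0$ is tautological. Since $E_i$ acts only on the $V$-factor of $O(H)\otimes V$, the inductive step reduces to establishing the single recursion
\begin{equation*}
E_i . G_{n-1}(x) \;=\; e^{-\alpha_i}(x)\bigl(1 - q_i^{2n}e^{\alpha_i}(x)\bigr)\, G_n(x).
\end{equation*}
To prove this, I would write $E_i . G_{n-1}(x) = \Tr|_{L_\mu}\!\bigl((E_i^{n-1}\otimes E_i)\Phi x\bigr)$ and use the identity $(1\otimes E_i)\Phi = \Phi E_i - (E_i\otimes q_i^{h_i})\Phi$, which is a direct consequence of the coproduct $\Delta(E_i) = E_i\otimes q_i^{h_i} + 1\otimes E_i$ combined with the intertwining property $\Delta(E_i)\Phi = \Phi E_i$. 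Inserting this into the trace splits $E_i . G_{n-1}(x)$ into two pieces.

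The first piece, $\Tr|_{L_\mu}\!\bigl((E_i^{n-1}\otimes\mathrm{id})\Phi E_i x\bigr)$, will be handled by cyclicity of the partial trace, $\Tr|_{L_\mu}(\Phi'\circ Y) = \Tr|_{L_\mu}((Y\otimes\mathrm{id})\Phi')$ for $Y\in \End(L_\mu)$, together with the torus commutation $x E_i^m = e^{m\alpha_i}(x) E_i^m x$ on weight spaces; unwinding these reductions produces $e^{-\alpha_i}(x)\,G_n(x)$. The second piece, $\Tr|_{L_\mu}\!\bigl((E_i^n\otimes q_i^{h_i})\Phi x\bigr)$, is computed by observing that the partial trace over $L_\mu$ forces the contributing $V$-weight to be $n\alpha_i$ (because $E_i^n$ must shift the first tensor factor of $\Phi$ back to the starting $L_\mu$-weight), on which $q_i^{h_i}$ acts as the scalar $q_i^{n\alpha_i(h_i)} = q_i^{2n}$; hence this piece equals $q_i^{2n}G_n(x)$. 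Subtracting yields the recursion. The principal obstacle throughout will be the careful bookkeeping of torus characters, weight shifts under $E_i^m$, and eigenvalues of $q_i^{h_i}$ while cycling operators past the intertwiner $\Phi$; once these are handled, the desired divisibility falls out directly with no further input.
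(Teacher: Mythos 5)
Your proof is correct and follows essentially the same route as the paper: an induction on $n$ in which the coproduct $\Delta(E_i)=E_i\otimes q_i^{h_i}+1\otimes E_i$, the intertwining property of $\Phi$, and the commutation of $E_i$ past the torus element produce exactly one new factor $(1-q_i^{2n}e^{\alpha_i})$ at each step. The only difference is presentational: the paper packages the identity $(1\otimes E_i)\Phi=\Phi E_i-(E_i\otimes q_i^{h_i})\Phi$ plus cyclicity of the partial trace into the equivariance relation $E_i.f(X)=f(E_iX)-f(q_i^{h_i}XE_iq_i^{-h_i})$ for the trace function extended to all of $U_q(\mathfrak{g})$, whereas you manipulate the intertwiner and the partial trace directly.
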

\begin{proof}
If $\Phi \in  \Hom_{U_{q}(\mathfrak{g})}(L_{\mu},L_{\mu}\otimes V)$, we can define its trace function not only as a function on $H$, but on the entire $U_{q}(\mathfrak{g})$, by $f(X)=\Tr|_{L_{\mu}}(\Phi\circ X)$. The restriction of $f$ to the subalgebra generated by all the $q^{h}$ is the trace function $\Psi$ as in the claim of the lemma. 

This defines a map from $\Hom_{\mathbb{C}}(L,L\otimes V)\cong {^*L}\otimes L\otimes V$ to linear functions from $U_{q}(\mathfrak{g})$ to $V$. We can make $U_{q}(\mathfrak{g})$ act on the algebraic dual of $U_{q}(\mathfrak{g})$ tensored with $V$ in a way to make the above defined trace map a morphism of $U_{q}(\mathfrak{g})$ modules. The easiest way to do that is to remember that the compatible definition of action of $Y\in U_{q}(\mathfrak{g})$ on ${^*L}\otimes L\otimes V$ was by $Y_{(1)}\otimes Y_{(2)}\otimes Y_{(3)}$, and to notice that the map from ${^*L}\otimes L\otimes V$ to the linear functions $f:U_{q}(\mathfrak{g})\to V$ corresponding to the one we just defined above is $\varphi\otimes l\otimes v\mapsto (X\mapsto \varphi(Xl)v)$. From this it is clear that the action of $Y\in U_{q}(\mathfrak{g})$ on the space of linear functions $f:U_{q}(\mathfrak{g})\to V$  is $$(Yf)(X)=Y_{(3)}.f(S^{-1}(Y_{(1)})XY_{(2)}).$$ 

If the function $f:U_{q}(\mathfrak{g})\to V$ was defined as a trace $f(X)=\Tr(\Phi\circ X)$ of an intertwining operator $\Phi$, then it is invariant with respect to the above action, and hence satisfies $$\varepsilon(Y) f(X)=Y_{(3)} .f(S^{-1}(Y_{(1)})XY_{(2)}).$$ Specializing this identity to $Y=q^{h}$, for which $\varepsilon(q^{h})=1$, $S^{-1}(q^h)=q^{-h}$, and $\Delta^{2} q^{h}=q^{h}\otimes q^{h}\otimes q^{h},$ we get that $f$ satisfies $$q^{h}.f(X)=f(q^{h}Xq^{-h}).$$
Specializing it to $Y=E_{i}$ instead, for which $\varepsilon(E_{i})=0$, $S^{-1}(E_{i})=-q_{i}^{-h_{i}}E_{i}$ and $$\Delta^{2} E_{i}=E_{i}\otimes q_{i}^{h_i}\otimes q_{i}^{h_i}+1\otimes E_{i}\otimes q_{i}^{h_{i}}+1\otimes 1\otimes E_{i},$$ we get that $f$ also satisfies $$0=q_{i}^{h_i}.f(-q_{i}^{-h_i}E_{i}Xq_{i}^{h_i})+q_{i}^{h_i}.f(XE_{i})+E_{i}.f(X)$$ so $$E_{i}.f(X)=f(E_{i}X)-f(q_{i}^{h_i}XE_{i}q_{i}^{-h_i}).$$

Using this formula, we will now prove by induction on $n$ that $$E_{i}^n.f(q^{h})=(1-q_{i}^{2}q^{\alpha_{i}(h)})\cdot (1-q_{i}^{4}q^{\alpha_{i}(h)})\ldots (1-q_{i}^{2n}q^{\alpha_{i}(h)})f(E_{i}^nq^{h}).$$ 

For $n=0$ the claim is trivial. Assume that it is true for $n-1$ and calculate
\begin{eqnarray*}
E_{i}^n.f(q^{h})&=&(1-q_{i}^{2}q^{\alpha_{i}(h)})\ldots (1-q_{i}^{2(n-1)}q^{\alpha_{i}(h)})E_{i}.f(E_{i}^{n-1}q^{h})\\
&=& (1-q_{i}^{2}q^{\alpha_{i}(h)})\ldots (1-q_{i}^{2(n-1)}q^{\alpha_{i}(h)}) \big( f(E_{i}^nq^{h})-f(q_{i}^{h_{i}}E_{i}^{n-1}q^{h}E_{i}q_{i}^{-h_{i}} ) \big)\\
&=& (1-q_{i}^{2}q^{\alpha_{i}(h)})\ldots (1-q_{i}^{2(n-1)}q^{\alpha_{i}(h)}) \big( f(E_{i}^nq^{h})-q_i^{2n}q^{\alpha_{i}(h)}f(E_{i}^nq^{h}) \big)\\
&=& (1-q_{i}^{2}q^{\alpha_{i}(h)})\ldots (1-q_{i}^{2n}q^{\alpha_{i}(h)}) f(E_{i}^nq^{h}).
\end{eqnarray*}

This ends the induction and proves the lemma. 
\end{proof}

\begin{remark}
\cite{KS}, Chapter 3, Definition 1.2.1, defines $O_{q}(G)$ as the Hopf subalgebra of the space of linear functionals on $U_{q}(\mathfrak{g})$ generated by the matrix elements of the finite dimensional representations of type I. The map from the beginning of the above proof, associating to $\varphi\otimes l\in {^*L_{\lambda}}\otimes L_{\lambda}$ the functional on $U_{q}(\mathfrak{g})$ given by $X\mapsto \varphi (Xl)$, is exactly the isomorphism from $\oplus_{\lambda}{^*L_{\lambda}}\otimes L_{\lambda}$ to this subalgebra of linear functionals, establishing the equivalence of these two definitions.
\end{remark}

\begin{lemma}\label{inj}
The restriction map $(O_{q}(G)\otimes V)^{U_{q}(\mathfrak{g})}\to O(H)\otimes V$ is injective.
\end{lemma}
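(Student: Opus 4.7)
The plan is to translate the statement through the identifications already set up. Under the isomorphism $(O_{q}(G)\otimes V)^{U_{q}(\mathfrak{g})}\cong \bigoplus_{\mu\in P_{+}}\Hom_{U_{q}(\mathfrak{g})}(L_{\mu},L_{\mu}\otimes V)$, the restriction map sends $\Phi_{\mu}\in \Hom_{U_{q}(\mathfrak{g})}(L_{\mu},L_{\mu}\otimes V)$ to the trace function $\Psi_{\Phi_{\mu}}(x)=\Tr|_{L_{\mu}}(\Phi_{\mu}\circ x)$. So I must show that if a finite sum $\sum_{\mu}\Phi_{\mu}$ has vanishing total trace function in $O(H)\otimes V$, then every $\Phi_{\mu}=0$.

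The key is to expand each trace function along the weight decomposition of $L_\mu$. Picking a weight basis of $L_\mu$, one sees that
\[
\Psi_{\Phi_{\mu}}\;=\;\sum_{\nu\le \mu} e^{\nu}\otimes T_{\nu}(\Phi_{\mu}),
\]
where $T_{\nu}(\Phi_{\mu})\in V[0]$ is obtained from the $L_{\mu}[\nu]\otimes V[0]$-component of $\Phi_{\mu}|_{L_{\mu}[\nu]}$ by taking a trace over $L_{\mu}[\nu]$. Crucially, the top weight $\nu=\mu$ appears with multiplicity one in $L_{\mu}$, spanned by $l_{\mu}$, and by construction $T_{\mu}(\Phi_{\mu})$ is precisely the expectation value $\langle \Phi_{\mu}\rangle\in V[0]$.

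Now assume $\sum_{\mu}\Psi_{\Phi_{\mu}}=0$ with not all $\Phi_{\mu}$ zero. Choose $\mu_{0}$ maximal in the dominance order among the finite set $\{\mu : \Phi_{\mu}\ne 0\}$. Since all weights of $L_{\mu}$ lie below $\mu$ in dominance order, the character $e^{\mu_{0}}$ can contribute to $\Psi_{\Phi_{\mu}}$ only when $\mu\ge \mu_{0}$; by maximality this forces $\mu=\mu_{0}$. Therefore the $e^{\mu_{0}}$-coefficient of $\sum_{\mu}\Psi_{\Phi_{\mu}}$ equals $\langle \Phi_{\mu_{0}}\rangle$. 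By Lemma \ref{iso}(2) (injectivity of the expectation value map for $\nu=0$ and $\lambda$ dominant integral), $\langle \Phi_{\mu_{0}}\rangle\ne 0$. Since the characters $\{e^{\nu}\}_{\nu\in P}$ are linearly independent in $O(H)$, this produces a nonzero contribution, contradicting the assumption.

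There is no serious obstacle here: once the trace is expanded via the weight decomposition, the argument reduces to the standard Peter-Weyl-type combination of two inputs, namely the maximality/triangularity of weights in $L_{\mu}$ and the injectivity of the expectation value map, both already available. The only thing to be slightly careful about is that the "coefficient" of $e^{\mu_{0}}$ is a vector in $V[0]$ rather than a scalar, but this causes no difficulty since $O(H)\otimes V[0]$ decomposes as $\bigoplus_{\nu}\mathbb{C}e^{\nu}\otimes V[0]$.
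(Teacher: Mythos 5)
Your proposal is correct and follows essentially the same route as the paper: expand each trace function in the characters $e^{\nu}$, observe that the coefficient of the top weight $\mu$ is the expectation value $\langle\Phi_{\mu}\rangle$, pick a maximal $\mu_{0}$ in the support, and conclude from linear independence of characters together with injectivity of the expectation value map (Lemma \ref{iso}) that $\Phi_{\mu_{0}}=0$, a contradiction. No gaps.
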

\begin{proof}

We will prove that the map $$\bigoplus_{\mu\in P_+}\Hom_{U_{q}(\mathfrak{g})}(L_{\mu},L_{\mu}\otimes V)\to O(H)\otimes V$$ associating to the intertwining operator $\Phi$ its weighted trace $\Psi(x)=\Tr|_{L_{\mu}}(\Phi\circ x)$ is injective.

Let $$\Phi=\sum_{\mu}\Phi_{\mu}^{v_{\mu}}$$ be an element of $\bigoplus_{\mu\in P_+}\Hom_{U_{q}(\mathfrak{g})}(L_{\mu},L_{\mu}\otimes V)$ whose weighted trace is zero. Notice that $$\Tr(\Phi_{\mu}^{v_{\mu}} \circ x)=\sum_{\nu}u_{\mu,\nu}e^{\nu}(x)$$ for some $u_{\mu,\nu}\in V[0]$, with $u_{\mu,\nu}=0$ unless $\nu$ is a weight of $L_{\mu}$, and with $u_{\mu,\mu}=v_{\mu}$. The fact that $\Phi$ maps to zero can be written as $$\sum_{\mu}\sum_{\nu}u_{\mu,\nu}e^{\nu}=0.$$ 

Assume $\Phi\ne 0$. Then one can pick $\mu=\mu_{0}$ so that $\Phi_{\mu}^{v_{\mu}}$ is nonzero and $\mu_{0}$ is a highest weight with that property. Using the fact that $e^{\nu}$ are linearly independent, the coefficient with $e^{\mu_{0}}$ in the above equation is $$0=u_{\mu_{0},\mu_{0}}=v_{\mu_{0}}.$$ But then $0=\Psi_{\mu_{0}}^{v_{\mu_{0}}}$, contrary to the choice of $\mu_{0}$. So, $\Phi=0$ and the map is injective.
\end{proof}

\begin{lemma}\label{sl2}
Theorem \ref{proving} holds for $U_{q}(\mathfrak{sl}_2)$.
\end{lemma}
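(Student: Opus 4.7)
The necessity of (1)--(3) and the injectivity of the restriction map are handled by Lemmas \ref{1)}--\ref{inj}, so the only task is surjectivity: every $f$ satisfying (1)--(3) must be a finite $\mathbb{C}$-linear combination of the trace functions $\Psi_\mu := \Res \Phi_\mu$, where $\Phi_\mu$ ranges over a basis of $\bigoplus_\mu \Hom_{U_q(\mathfrak{sl}_2)}(L_\mu, L_\mu \otimes V)$. Decomposing $V$ into simple summands reduces to the case of an irreducible $V$; if $V = L_{2m+1}$ then $V[0] = 0$ and $\Hom_{U_q(\mathfrak{sl}_2)}(L_\mu, L_\mu \otimes V) = 0$ by Lemma \ref{iso}(3), so both sides vanish. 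I therefore focus on $V = L_{2m}$, in which case $V[0] = \mathbb{C} v_0$ is one-dimensional and by Lemma \ref{iso}(3) there is a unique intertwiner $\Phi_\mu$ with $\langle \Phi_\mu \rangle = v_0$ for each $\mu \ge m$ (and none for $\mu < m$). Identifying $H \cong \mathbb{C}^\times$ via $x = e^{zh}$, every $f \in O(H) \otimes V[0]$ has the form $f(e^{zh}) = \bigl( \sum_k a_k e^{kz} \bigr) v_0$ with finitely many nonzero $a_k \in \mathbb{C}$.

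The plan is induction on the maximal exponent $N$ of $f$, subtracting a suitable multiple of $\Psi_N$ to strictly decrease $N$. Two observations drive this. First, since $\Phi_\mu(v_\mu) = v_\mu \otimes v_0 + \text{l.o.t.}$, the coefficient of $e^{\mu z}$ in $\Psi_\mu(e^{zh})$ equals $v_0$; moreover, since every weight of $L_\mu$ is congruent to $\mu \pmod 2$, the exponents appearing in $\Psi_\mu$ all have parity $\mu$. Second, and more critically, I claim that for any nonzero $f$ satisfying (2) and (3) the maximal exponent $N$ must be at least $m$: condition (3) with $n = m$ states that $\sum_k a_k e^{kz}$ is divisible by $p(e^{2z}) := \prod_{j=1}^m(1 - q^{2j} e^{2z})$, which has degree $2m$ in $e^z$; and writing out the functional equation in condition (2) using the explicit formula for $\mathcal{A}_{s,V}$ from Lemma \ref{explicitA} and comparing extremal exponents on both sides of the cleared (polynomial) equation forces the support of $f$ to be symmetric, i.e., the minimal exponent equals $-N$. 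Together these yield $2N \ge 2m$.

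One small wrinkle must be addressed before the induction closes: $\Psi_N$ contains only exponents of parity $N$, so subtracting $a_N \Psi_N$ kills the $e^{Nz}$ coefficient of $f$ only if $f$'s exponents all share a single parity. But both conditions (2) (since $\mathcal{A}_{s,V}$ is a rational function in $e^{2z}$) and (3) (multiplication by $p(e^{2z})$) preserve the decomposition of $f$ into its even-exponent and odd-exponent parts, so those parts independently satisfy (1)--(3); I may therefore reduce to $f$ having exponents of a single parity $\epsilon$, whence $N \equiv \epsilon \pmod 2$ matches $\Psi_N$. The difference $f - a_N \Psi_N$ then satisfies (1)--(3), still has all exponents of parity $\epsilon$, and has maximal exponent at most $N - 2$; iterating, the process must terminate at $0$ (a nonzero leftover would contradict the bound $N \ge m$ once the maximal exponent drops below $m$), producing the desired expression $f = \sum c_\mu \Psi_\mu$. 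The main obstacle is the second observation above --- proving $N \ge m$ --- which requires unpacking Lemma \ref{explicitA} and the divisibility condition simultaneously enough to perform the extremal-coefficient comparison cleanly.
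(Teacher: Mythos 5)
Your proof is correct, but it takes a genuinely different route from the paper's. The paper proves this lemma by a dimension count: it filters $O(H)\otimes V[0]$ by the maximal degree $N$ of a Laurent polynomial in $q^z$, uses Lemma \ref{iso}(3) together with the injectivity of $\Res$ (Lemma \ref{inj}) to show the space of trace functions of degree at most $N$ has dimension exactly $N-m+1$, and then shows the space of functions satisfying (1)--(3) of degree at most $N$ has dimension at most $N-m+1$ by dividing $f$ by $(q^{z+1}-q^{-z-1})\cdots(q^{z+m}-q^{-z-m})$ (condition (3)) and checking via Lemma \ref{explicitA} that the quotient $g$ satisfies $g(q^z)=g(q^{-z})$. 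You instead run an induction on the top exponent, subtracting $a_N\Psi_N$ at each step; the two key facts you need --- that the extremal exponents of $f$ are $\pm N$ and that $N\ge m$ --- follow from exactly the same computation the paper performs (the cleared form of the functional equation from Lemma \ref{explicitA} gives symmetry of the extremes, and divisibility by the degree-$2m$ polynomial $\prod_{j=1}^m(1-q^{2j}e^{2z})$ gives the spread), so your flagged ``main obstacle'' does go through. Your parity reduction is harmless but not actually needed: even for mixed-parity $f$ the subtraction strictly decreases the top exponent, which is all the induction requires. What each approach buys: the paper's count is shorter and packages everything into one divisibility-plus-symmetry observation, while yours is constructive, avoids invoking Lemma \ref{inj} for this step, and mirrors the weight-diagram induction used later for general rank in the proof of Theorem \ref{proving}, so it is arguably the more uniform argument. (Minor slip: the highest weight vector of $L_\mu$ should be written $l_\mu$, not $v_\mu$.)
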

\begin{proof}
As stated before, we are using identifications $\mathbb{C}\cong \mathfrak{h}^*$, $z\mapsto z\frac{\alpha}{2}$. Let us use a slightly different convention for $\mathfrak{h}$: it is also one dimensional, so we can write any element of it as $zh$, for $h=h_{1}$ the standard generator of $\mathfrak{h}$ and $z\in \mathbb{C}$. The space $V[0]$ is one dimensional, so pick any $v_{0}\ne 0 \in V[0]$ and identify $V[0]\cong \mathbb{C}$ by it. The polynomial functions in $O(H)\otimes V[0]$ we talk about are, with all these identifications, $\mathbb{C}[q^{z},q^{-z}]$, spanned by functions $e^{n\alpha/2}(q^{zh})=q^{nz},\, n\in \mathbb{Z}$.  With all these conventions, $zh\leftrightarrow z\alpha=2z\frac{\alpha}{2}$, so and the above definitions of trace functions give $\Psi(q^{zh})=\Psi^{v}_{\mu}(z\frac{\alpha}{2})$. This is a good convention because $z\alpha/2 \leftrightarrow z$. The dynamical Weyl group invariance, with all these identifications, has the form $$\mathcal{A}_{s,V}(z)\Psi(q^{zh})=\Psi(q^{-zh}).$$

We are proving that two subspaces of polynomial functions $O(H)\otimes V$ are equal: the space of traces of intertwining operators and the space of functions satisfying 1)-3) from the statement of Theorem \ref{proving}. Lemmas \ref{1)}, \ref{2)} and \ref{3)} show that the space of traces of intertwining operators is contained in the space of functions satisfying 1)-3). We will now prove this lemma by proving that these two spaces of functions are of the same size (more accurately, as they are infinite dimensional, that there is a filtration on $O(H)$ such that dimensions match on every filterered piece; the filtration we use will be the obvious filtration by degree of a polynomial).

As stated in Lemma \ref{iso}, the space of intertwining operators $L_{\mu}\to L_{\mu}\otimes V$ for $V=L_{2m}$ is zero if $\mu=0,1,\ldots m-1$, and is one dimensional if $\mu \in \mathbb{N}, \mu \ge m$. The trace of such an operator is a Laurent polynomial $\Psi(x)=\Tr|_{L_{\mu}}(\Phi\circ x)=\sum_{\nu}a_{\nu}e^{\nu} (x)$, with all the $\nu$ that appear being weights of $L_{\mu}$. So, it is a Laurent polynomial of maximal (positive and negative) degree $\mu$. Using Lemma \ref{inj} that allows us to calculate the dimension of the space of trace functions by calculating the dimension of the appropriate space of intertwining operators, we can conclude that for any large enough positive integer $N$, the space of trace functions of maximal (positive) degrees less or equal to $N$ has dimension $N-m+1$. 

Now, let us calculate the dimension of the space of functions that satisfy 1)-3) and have degree $\le N$. It is enough to show that it has dimension less or equal to $N-m+1$; from this it will follow that it has exactly this dimension and that the two spaces are equal. 

Let $f$ be such a function. Condition 1) of $\mathrm{Im}f\in V[0]$ means we can regard $f$ as an element of $ \mathbb{C}[q^{z},q^{-z}]$ after taking into account $V[0]\cong \mathbb{C}$. So, 
$f$ is of the form $f(x)=\sum_{n}a_{n}e^{n\alpha /2}(x),$ with only finitely many $n \in \mathbb{Z}, \left| n\right| \le N$ appearing.

Condition 3) is about $E^{n}.f$ being divisible by a certain function. We are in a $U_{q}(\mathfrak{sl}_{2})$ module $V$, which has $1$ dimensional weight spaces $V[0],V[2],\ldots V[2m]$, with $E:V[2i]\to V[2i+2]$ being injective for $i=0,\ldots m-1$, and being zero for $i=m$. The functions $E^{n}.f$ are of the form rational function times a basis vector for $V[2n]$. For $n>m$ this is zero, so it is divisible by anything. For $n=1,\ldots m$, the rational function in $E^n.f$ is, up to a multiplicative constant, equal to $f$. Condition 3) in this case says that  $f$ is divisible in the ring $\mathbb{C}[e^{z},e^{-z}]$ by $$(1-q^{2+2z})(1-q^{4+2z})\ldots (1-q^{2m+2z})=$$ $$=(-1)^m \cdot q^{m(m+1)/2} \cdot q^{mz} \cdot (q^{z+1}-q^{-z-1}) (q^{z+2}-q^{-z-2}) \ldots(q^{z+m}-q^{-z-m}).$$ This is equivalent to saying it is divisible by $$(q^{z+1}-q^{-z-1}) (q^{z+2}-q^{-z-2}) \ldots(q^{z+m}-q^{-z-m}).$$

Condition 2) says that $f$ is invariant under the unshifted action of the dynamical Weyl group. As the Weyl group in this case has only one nontrivial element, call it $s$, this really means $f$ is invariant under the action of the operator $\mathcal{A}_{s,V}$, which was explicitly calculated in \ref{explicitA}. We know $s$ acts on $\mathfrak{h}$ by $-1$, so this means 
\begin{eqnarray*}
f(q^{-z})&=&\mathcal{A}_{s,V}(z)f(q^{z})\\
&=& A_{s,V}(-z-1)f(q^{z})\\
&=& (-1)^m \prod_{j=1}^{m}\frac{[-z+j]_{q}}{[-z-j]_{q}} \cdot  f(q^{z})\\
&=& \prod_{j=1}^{m}\frac{q^{j-z}-q^{-j+z}}{q^{j+z}-q^{-j-z}} \cdot  f(q^{z})
\end{eqnarray*}
Thus, the function 
$$g(q^z)= \frac{f(q^z)} {(q^{z+1}-q^{-z-1}) (q^{z+2}-q^{-z-2}) \ldots(q^{z+m}-q^{-z-m})} $$
is $W$-equivariant, in the sense $g(q^{z})=g(q^{-z})$. The function $g$ is also a Laurent polynomial, because by condition 3) above, $f$ is divisible in $\mathbb{C}[q^z,q^{-z}]$ by the denominator of $g$.  If $f$ was of degree $\le N$, then $g$ is of degree $\le N-m$. Invariance under $W$ and limitations on the maximal degree mean that $g$ is of the form $$g(q^{z})=\sum_{n=0}^{N-m}b_{n}(q^{zn}+q^{-zn}).$$
The space of all such functions has dimension $N-m+1$; so, the space of all possible $f$ that satisfy 1)-3) and have degree less or equal to $N$ also has dimension less or equal to $N-m+1$. 

This proves the lemma. 
\end{proof}

To finish the proof of Theorem \ref{proving} we need to show that any function $f\in O(H)\otimes V$ satisfying 1)-3) from the statement can be written as a linear combination of trace functions. For this,  write $f$ as a sum of characters of $H$, $$f(x)=\sum_{\mu \in P} v_{\mu}e^{\mu}(x),$$ with $v_{\mu}\in V[0]$. 

For any fixed $i=1,\ldots r$ one can decompose $f$ as follows:
$$f=\sum_{\beta\in P_{+}/\mathbb{Z}\alpha_{i}}f_{\beta}, \qquad  f_{\beta}=\sum_{\mu \in \beta}v_{\mu}e^{\mu}.$$

\begin{lemma}\label{beta} Every $f_{\beta}$ is a sum of trace functions for the subalgebra $U_{q_{i}}(\mathfrak{sl}_{2})$ of $U_{q}(\mathfrak{g})$ generated by $E_{i}, F_{i}, q^{zh_{i}}, z\in \mathbb{C}$.
\end{lemma}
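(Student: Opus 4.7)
The plan is to reduce to Lemma~\ref{sl2} by showing that each $f_\beta$ satisfies the three conditions of Theorem~\ref{proving} with respect to the rank-one subalgebra $U_{q_i}(\mathfrak{sl}_2)$, and then to apply that lemma to an appropriate twist. The key observation is that the conditions of Theorem~\ref{proving}, when restricted to the single simple root $\alpha_i$, are ``local'' with respect to the coset decomposition $P = \bigsqcup_\beta (\beta + \mathbb{Z}\alpha_i)$.

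First I would verify that $f_\beta$ inherits the three properties. Condition 1) is immediate: since $f$ takes values in $V[0]$ and $f_\beta$ is just the subseries of $f$ supported on characters in $\beta + \mathbb{Z}\alpha_i$, $f_\beta$ also takes values in $V[0] \subseteq V[h_i=0]$. Condition 3) follows from a coset-separation argument: $E_i$ acts on the $V$-factor only and preserves the $H$-character support, so $E_i^n f = \sum_\beta E_i^n f_\beta$ with each $E_i^n f_\beta$ supported on $\beta + \mathbb{Z}\alpha_i$. Since characters from distinct cosets are linearly independent in $O(H)$ and the divisor $\prod_{j=1}^n(1-q_i^{2j}e^{\alpha_i})$ is a polynomial in $e^{\alpha_i}$ which multiplies within each coset, divisibility of the total sum forces divisibility of each coset-piece. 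For condition 2) only the reflection $s_i$ is needed; it preserves each coset since $s_i\lambda - \lambda \in \mathbb{Z}\alpha_i$, and the operator $\mathcal{A}_{s_i,V}$ defined for $U_q(\mathfrak{g})$ agrees with the one defined intrinsically inside $U_{q_i}(\mathfrak{sl}_2)$, because its construction via the singular vector $m^\lambda_{s_i\cdot\lambda} = F_i^{\lambda(h_i)+1} m_\lambda / [\lambda(h_i)+1]_{q_i}!$ involves only the $\mathfrak{sl}_2$-triple $(E_i,F_i,h_i)$ and how it acts on $V$. Thus the $s_i$-part of the dynamical Weyl invariance of $f$ decomposes coset-wise into $s_i$-invariance of each $f_\beta$.

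With 1)--3) in hand, I would apply Lemma~\ref{sl2}. Fix a representative $\beta_0 \in \beta$ and factor $f_\beta(x) = e^{\beta_0}(x)\, g_\beta(x)$, where $g_\beta = \sum_n v_{\beta_0+n\alpha_i}\, e^{n\alpha_i}$ depends on $x$ only through $e^{\alpha_i}(x)$ and is therefore naturally a Laurent polynomial function on the rank-one torus $H_i$ with values in $V[0]$. Properties 1)--3) for $f_\beta$ pass to the rank-one versions for $g_\beta$, so Lemma~\ref{sl2} expresses $g_\beta$ as a finite sum of rank-one trace functions $\Tr|_{L_n}(\Phi \circ q^{zh_i})$ with $\Phi \in \Hom_{U_{q_i}(\mathfrak{sl}_2)}(L_n, L_n \otimes V)$. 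Re-tensoring each summand with the character $e^{\beta_0}$---equivalently, replacing $L_n$ by the enhanced module $L_{n,\beta_0}$ obtained by declaring its highest-weight vector to carry the full $H$-weight $\beta_0$---produces $f_\beta$ itself as a sum of $U_{q_i}(\mathfrak{sl}_2)$-trace functions over the full torus $H$, as required. The main obstacle is the bookkeeping at this last step: one must verify that every $U_{q_i}(\mathfrak{sl}_2)$-intertwiner $L_n \to L_n \otimes V$ lifts canonically to an intertwiner $L_{n,\beta_0} \to L_{n,\beta_0} \otimes V$ compatible with the enriched $H$-grading, and that its trace against $x \in H$ equals $e^{\beta_0}(x)$ times the original rank-one trace. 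This is immediate once the $H$-grading on $L_{n,\beta_0}$ is fixed---twisting by a character of $H$ that is inert on $U_{q_i}(\mathfrak{sl}_2)$ does not alter the space of intertwiners---but it is the technical point requiring care.
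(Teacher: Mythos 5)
Your proposal is correct and follows essentially the same route as the paper: verify that each $f_\beta$ inherits conditions 1)--3) via coset separation (linear independence of characters from distinct cosets of $\mathbb{Z}\alpha_i$, the fact that $s_i$ and multiplication by polynomials in $e^{\alpha_i}$ preserve each coset, and the agreement of $A_{s_i,V}$ for $U_q(\mathfrak{g})$ with its rank-one counterpart), then invoke Lemma~\ref{sl2}. Your final paragraph about factoring out $e^{\beta_0}$ and twisting the rank-one modules by an inert character of $H$ makes explicit a bookkeeping step the paper leaves implicit, but it is the same argument.
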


\begin{proof}
Due to Lemma \ref{sl2}, it is enough to prove they satisfy 1)-3) for $U_{q_i}(\mathfrak{sl}_{2})$.

Condition 1) is clear. 

Condition 2) is about dynamical Weyl group invariance. The operators $A_{s_{i},V}(\mu):V[\nu]\to V[s_{i}\nu]$ for $U_{q}(\mathfrak{g})$ and $A_{s_{i},V}(\mu(h_i)):V[\nu]\to V[s_{i}\nu]$ for $U_{q_{i}}(\mathfrak{sl}_{2})$ coincide. Using this, the fact that $\mathcal{A}_{s_{i},V}(\lambda)=A_{s_{i},V}(-\lambda-\rho)$, and the condition that $f$ is invariant under the action of $s_{i}$, meaning $$\sum_{\beta}f_{\beta}(q^{2s_{i}\lambda})=\mathcal{A}_{s_{i},V}(\lambda)\sum_{\beta}f_{\beta}(q^{2\lambda}),$$ we get  $$\sum_{\beta}f_{\beta}(q^{2s_{i}\lambda})=\sum_{\beta}A_{s_{i},V'}(-d_{i}^{-1}(\left<\alpha_{i}, \lambda \right>+1)) f_{\beta}(q^{2\lambda}).$$
Now decompose both of these functions into their $\beta$ parts as we did with $f$. Call the left hand side function $l$ and the right hand side $r$. Using $\left< \mu, s_{i}\lambda \right>=\left< s_{i}\mu, \lambda \right>$ and the fact that $\mu \in \beta$ implies $s_{i}\mu \in \beta$, we get $l_{\beta}(q^{\lambda})=f_{\beta}(q^{2s_{i} \lambda})$. On the right hand side, we know that $A_{s_i,V}(-d_{i}^{-1}(\left< \alpha_{i}, \lambda \right>+1))$ maps $V[\nu]$ to $V[s_{i}\nu]$, so $r_{\beta}(q^{\lambda})=A_{s_i,V}(-d_{i}^{-1}(\left< \alpha_{i}, \lambda \right>+1)) f_{\beta}(q^{2\lambda})$. Thus we have
$$f_{\beta}(q^{2s_{i}\lambda})=A_{s_{i},V}(-d_{i}^{-1}(\left< \alpha_{i}, \lambda \right>+1)) f_{\beta}(q^{2\lambda}).$$ Remembering the identification $\mathfrak{h}\cong \mathfrak{h}^*$  and restricting this to $2\lambda=zd_{i}^{-1}\alpha_{i}$, which corresponds to $zh_{i}$ we get
$$f_{\beta}(q^{-zh_{i}})=A_{s_{i},V'}(d_{i}^{-1}(-z-1)) f_{\beta}(e^{zh_{i}}).$$
This is exactly the dynamical Weyl group invariance for $U_{q}(\mathfrak{sl}_{2})$, as the function $f$ is defined in terms of powers of $q$, and the operator $A_{s_{i},V}$, which was defined for $U_{q_{i}}(\mathfrak{sl}_{2})$, in terms of $q_{i}=q^{d_{i}}$. Replacing $q$ with $q_{i}$ we get the required dynamical Weyl group invariance for $U_{q_{i}}(\mathfrak{sl}_{2})$.

Condition 3): we know that $E_{i}^n.f$ is divisible by $(1-q_{i}^2e^{\alpha_{i}})\ldots (1-q_{i}^{2n}e^{\alpha_{i}})$, call the quotient $g\in O(H)$ and write $$E_{i}^n.\sum_{\beta}f_{\beta}=(1-q_{i}^2e^{\alpha_{i}})\ldots (1-q_{i}^{2n}e^{\alpha_{i}})\cdot \sum_{\beta}g_{\beta}.$$ Decompose both sides into their $\beta$ parts to get $$E_{i}^n.f_{\beta}=(1-q_{i}^2e^{\alpha_{i}})\ldots (1-q_{i}^{2n}e^{\alpha_{i}})g_{\beta}.$$ Replacing $q$ by $q_{i}$ in all the functions we get the required statement for $U_{q_{i}}(\mathfrak{sl}_{2})$.

\end{proof}

For $f$ as above, $f=\sum_{\mu \in P} v_{\mu}e^{\mu}$, let $D(f)= \{ \mu\in P | v_{\mu}\ne 0 \}$, and let $C(f)$ be the convex hull of $D(f)$ in the Euclidean space $\mathfrak{h}_{\mathbb{R}}^*$. Then define the weight diagram of $f$ to be the set $\bold{WD}(f)=C(f)\cap P$. We will prove the theorem by induction on the size of the set $\bold{WD}(f)$.

\begin{lemma}
If $f$ satisfies 1)-3), then $D(f)$, and consequently $\bold{WD}(f)$, is $W$-invariant.
\end{lemma}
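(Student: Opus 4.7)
The plan is to reduce to rank one via the coset decomposition along a simple root. Fix a simple root $\alpha_i$; since $W$ is generated by the simple reflections $s_i$ and the operators $\mathcal{A}_w$ compose via Corollary~\ref{calA}, it suffices to show $D(f)$ is $s_i$-invariant for each $i$. Two observations drive the reduction. First, the restricted operator $\mathcal{A}_{s_i,V}(\lambda)|_{V[0]}$ coincides with the $U_{q_i}(\mathfrak{sl}_2)$ dynamical Weyl operator and therefore depends on $\lambda$ only through the scalar $\lambda(h_i)$. Second, $s_i$ preserves each coset $\beta+\mathbb{Z}\alpha_i$ in $P$, since $s_i\mu-\mu=-\mu(h_i)\alpha_i\in\mathbb{Z}\alpha_i$. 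Hence in the coset decomposition $f=\sum_\beta f_\beta$ with $\beta\in P/\mathbb{Z}\alpha_i$, condition 2 for $w=s_i$ splits coset-by-coset, and each $f_\beta$ independently satisfies $f_\beta(q^{2s_i\lambda})=\mathcal{A}_{s_i,V}(\lambda)f_\beta(q^{2\lambda})$.

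Next, by Lemma~\ref{beta} each $f_\beta$ is a (finite) sum of trace functions for $U_{q_i}(\mathfrak{sl}_2)$. Invoking Lemma~\ref{sl2} in this rank-one setting produces the factorization $f_\beta=\Delta_\beta\cdot g_\beta$, where $\Delta_\beta$ is the explicit Laurent polynomial $\prod_j(q_i^{z+j}-q_i^{-z-j})$ with $s_i$-symmetric support $\{-m,-m+2,\dots,m\}$, and $g_\beta$ is $s_i$-equivariant, $g_\beta(q_i^{-zh_i})=g_\beta(q_i^{zh_i})$, so that its support is $s_i$-symmetric as well. Both factors having $s_i$-symmetric support, the Minkowski sum containing $D(f_\beta)$ is itself $s_i$-symmetric; combining the $s_i$-equivariance of $g_\beta$ with the explicit form of $\Delta_\beta$ shows that the coefficient of $f_\beta$ at $\mu$ vanishes precisely when the coefficient at $s_i\mu$ does. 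This gives $s_i$-invariance of each $D(f_\beta)$, and taking unions over $\beta$ yields $s_i$-invariance of $D(f)=\bigcup_\beta D(f_\beta)$. Varying $i$ over all simple roots then yields $W$-invariance of $D(f)$; consequently $\bold{WD}(f)=C(f)\cap P$ is $W$-invariant.

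The principal technical issue is controlling potential coefficient cancellations in the product $\Delta_\beta\cdot g_\beta$: a priori a Laurent polynomial product can have strictly smaller support than the Minkowski sum of the supports of the factors, and one must verify that any such cancellation is itself $s_i$-symmetric rather than breaking the symmetry. This is precisely where the specific form of the divisor $\Delta_\beta$ (whose coefficients are determined by $q_i$ and the shifts $j$) paired with the $s_i$-equivariance of $g_\beta$ must be combined; the computation is entirely rank-one and local to a single $\mathbb{Z}\alpha_i$-line.
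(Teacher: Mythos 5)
You follow the paper's own route---reduction to simple reflections, the coset decomposition along $\mathbb{Z}\alpha_i$, and the rank-one factorization $f_\beta=\Delta_\beta\cdot g_\beta$ with $g_\beta$ symmetric---and you correctly isolate the crux: the support of a product of Laurent polynomials can be strictly smaller than the Minkowski sum of the supports of the factors. But the statement you then assert, that ``the coefficient of $f_\beta$ at $\mu$ vanishes precisely when the coefficient at $s_i\mu$ does,'' is never verified, and it is in fact \emph{false}. Take $U_{q}(\mathfrak{sl}_2)$, $V=L_2$ (so $m=1$ and $\Delta(q^{z})=q^{z+1}-q^{-z-1}$) and the symmetric Laurent polynomial $g(q^{z})=1+q^{2}(q^{2z}+q^{-2z})$. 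Then
$$f=g\Delta=q^{3}q^{3z}+(q-q)\,q^{z}+(q^{3}-q^{-1})\,q^{-z}-q\,q^{-3z},$$
so the coefficient at $q^{z}$ cancels while the coefficient at $q^{-z}$ does not. By the dimension count in the proof of Lemma \ref{sl2}, the space of functions satisfying 1)--3) of bounded degree coincides with the space of all $g\Delta$ with $g$ symmetric of the corresponding degree, so this $f$ does satisfy the hypotheses, yet $D(f)=\{3,-1,-3\}\cdot\tfrac{\alpha}{2}$ is not $s$-invariant. The ``entirely rank-one and local'' computation you defer to therefore does not exist; the assertion about $D(f)$ cannot be rescued. (The paper's own proof makes the same unjustified jump from ``$g$ is symmetric'' to ``the set of powers appearing in $f$ is symmetric.'')

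What does survive---and is all that the induction in the proof of Theorem \ref{proving} actually uses---is the $W$-invariance of $\bold{WD}(f)$, and for that you only need the \emph{extreme} exponents of each $f_\beta$ on its $\alpha_i$-line, where no cancellation can occur: the top (resp.\ bottom) exponent of $\Delta_\beta g_\beta$ is the sum of the top (resp.\ bottom) exponents of the factors, with coefficient the product of two nonzero leading coefficients. Since $g_\beta$ is $s_i$-symmetric and $\Delta_\beta$ has nonzero terms in the extreme degrees $\pm m$, the two endpoints of $\mathrm{conv}(D(f_\beta))$ are exchanged by $s_i$, so each segment $\mathrm{conv}(D(f_\beta))$ is $s_i$-stable; hence $C(f)=\mathrm{conv}\bigl(\bigcup_\beta \mathrm{conv}(D(f_\beta))\bigr)$ and $\bold{WD}(f)=C(f)\cap P$ are $s_i$-stable, and letting $i$ vary gives $W$-invariance of $\bold{WD}(f)$. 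You should state and prove the lemma in this weaker form; the later argument still goes through, because every extremal point of the $W$-stable set $\bold{WD}(f)$ lies in $D(f)$ by the convexity argument already given in the proof of the main theorem, and its $W$-translates are again extremal.
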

\begin{proof}
First note that it follows directly from the proof of Lemma \ref{sl2} that this is true for $U_{q}(\mathfrak{sl}_{2})$, as $$f(q^{zh})=g(q^{zh})\cdot (q^{z+1}-q^{-z-1})\ldots (q^{z+m}-q^{-z-m})$$ with $g$ being $W$-equivariant, so the set of powers of $q^z$ that appear in $f$ is symmetric around $0$. This means $D(f)$, and therefore also $C(f)$ and $\bold{WD}(f)$, is $W$-invariant.

Next, write $f=\sum f_{\beta}$ as before, for $\beta \in P_{+}/\mathbb{Z}\alpha_{i}$. The simple reflection $s_{i}$ preserves the equivalence classes $\beta$. So, $s_{i} D(f)=D(f)$ if and only if $s_{i}D(f_{\beta})=D(f_{\beta})$.

From the previous lemma, $f_{\beta}$ is the sum of trace functions, so from the comment at the beginning of this proof, $D(f_{\beta})$ is invariant under the action of the Weyl group of  $U_{q_{i}}(\mathfrak{sl}_{2})$. The nontrivial element of this Weyl group is the reflection with respect to the only simple root for $U_{q_{i}}(\mathfrak{sl}_{2})$, which is $\alpha_{i}$. So, every $D(f_{\beta})\subseteq P$ is preserved under $s_{i}$, hence $s_{i}D(f)=D(f)$ and $s_{i}\bold{WD}(f)=\bold{WD}(f)$.

Geometrically in the lattice $P$, the argument in the last paragraph corresponds to decomposing $D(f)$ into sets $D(f_{\beta})$, so that every $D(f_{\beta})$ consists of points of $D(f)$ that lie on one of the parallel lines in $\mathfrak{h}^*$, passing through $\beta$, in the direction of $\alpha_{i}$. Then we note that $s_{i}$ preserves each of these lines, and that every such line is symmetric with respect to the hyperplane through $0$ orthogonal to $\alpha_{i}$. This is exactly the reflection hyperplane of $s_{i}$, so $D(f)$ is symmetric with respect to this hyperplane and preserved by $s_{i}$.

Of course, once we proved $D(f)$ and $\bold{WD}(f)$ are preserved by all the simple reflections $s_i$, we immediately conclude that they are preserved by the entire group $W$ generated by all the $s_{i}$.

\end{proof}

\begin{proof}[Proof of Theorem \ref{proving}]
Let us prove Theorem \ref{proving} by induction on the size of the finite set $\bold{WD}(f)$.

If the set $\bold{WD}(f)$ is empty, $f=0$ and there is nothing to prove.

Otherwise, assume we have proved the theorem for all functions whose weight diagram has fewer elements than $\bold{WD}(f)$.

Pick $\mu \in \bold{WD}(f)$ an extremal point, meaning a point $\mu$ such that it is not in the convex hull of $\bold{WD}(f)\backslash \{\mu\}$. Such a point exists, as $\bold{WD}(f)$ is a finite set. Moreover, such a point $\mu$ is in $D(f)$. To see that, notice that $\mu \in C(f)$ means that either $\mu \in D(f)$, or $\mu=\sum_{i}t_{i}\mu_{i}$ for some $t_{i}\ge 0, \sum_{i}t_{i}=1$ and some $\mu_{i}\in D(f), \mu_{i}\ne \mu$. In the latter case $\mu_{i}\in D(f)\backslash \{\mu\} \subseteq \bold{WD}(f)\backslash \{\mu\}$, so $\mu =\sum_{i}t_{i}\mu_{i}$ is in the convex hull of $ \bold{WD}(f)\backslash \{\mu\}$, contrary to the choice of $\mu$.

As $D(f)$ and $\bold{WD}(f)$ are $W$-invariant, we can assume without loss of generality that $\mu$ is a dominant weight. Finally, for any $i=1,\ldots r$, the weight $\mu + \alpha_{i}$ is not in $\bold{WD}(f)$. To see that, consider two cases: either $\left< \mu, \alpha_{i}\right>\ne 0$ or $\left< \mu, \alpha_{i}\right>=0$. If $\left< \mu, \alpha_{i}\right> \ne 0$, then $s_{i}\mu=\mu-2\frac{\left< \mu, \alpha_{i}\right>}{\left< \alpha_{i}, \alpha_{i}\right>}\alpha_{i} \in \bold{WD}(f) \backslash \{\mu\}, \mu + \alpha_{i}\in \bold{WD}(f)\backslash \{\mu\}$ implies that $\mu$ is in the convex hull of $\bold{WD}(f)\backslash \{\mu\}$, contrary to the choice of $\mu$. If $\left< \mu, \alpha_{i}\right> = 0$, then the same can be concluded from $\mu + \alpha_{i}\in \bold{WD}(f)\backslash \{\mu\}, s_{i}(\mu + \alpha_{i})=\mu - \alpha_{i}\in \bold{WD}(f)\backslash \{\mu\}$.

Let us now restrict $f$ to $U_{q_{i}}(\mathfrak{sl}_{2})$ and decompose into $f_{\beta}$ as before. Then Lemma \ref{beta} tells us that all $f_{\beta}$, and in particular the $f_{\beta}$ such that $\mu\in \beta$, are traces of intertwining operators for $U_{q_{i}}(\mathfrak{sl}_{2})$. Lemma \ref{iso}, 2) then implies that $E_{i}^{d_{i}^{-1}\left<\mu,\alpha_{i} \right>+1}v_{\mu}=0$.

Since this statement is valid for every $i=1\ldots r$, the same Lemma \ref{iso}, 2) implies that there is an intertwining operator $\bar{\Phi}^{v_{\mu}}_{\mu}: L_{\mu}\to L_{\mu}\otimes V$. Its trace function $\Psi ^{v_{\mu}}_{\mu}$ has $\bold{WD}(\Psi ^{v_{\mu}}_{\mu})$ equal to the convex hull of the set of weights of $L_{\mu}$, i.e. equal to the convex hull of the $W$ orbit of $\mu$.  This is contained in the set $\bold{WD}(f)$. So, the function $f-\Psi^{v_{\mu}}_{\mu}$ satisfies 1)-3), has $\bold{WD}(f-\Psi^{v_{\mu}}_{\mu})$ contained in $\bold{WD}(f)$, and has the coefficient of $e^{\mu}$ equal to $v_{\mu}-v_{\mu}=0$. This means $D(f-\Psi^{v_{\mu}}_{\mu})$ is a subset of $\bold{WD}(f)$ which does not contain $\mu$, so it's convex hull doesn't contain $\mu$, and so  $\bold{WD}(f-\Psi^{v_{\mu}}_{\mu})$ is a proper subset of $\bold{WD}(f)$. 

By induction assumption we can now express $f-\Psi^{v_{\mu}}_{\mu}$ as a linear combination of trace functions. So, we can express $f$ as a linear combination of trace functions. This proves the theorem.

\end{proof}

\begin{remark} \label{last}
It is explained in \cite{KNV} how theorem \ref{KNV1} reduces when $V$ is small enough in the appropriate sense. First, if $V$ is a trivial representation of $\mathfrak{g}$, then conditions $(1)$ and $(3)$ of theorem \ref{KNV1} are automatically satisfied, so the statement becomes the Chevalley isomorphism theorem $\mathbb{C}[\mathfrak{g}]^G \tilde{\to} \mathbb{C}[\mathfrak{h}]^W$. The second special case is when $V$ is \textit{small} in the sense of \cite{Broer}, meaning that for every root $\alpha$, $2\alpha$ is not a weight of $V$. In that case, any function $f\in \mathbb{C}[\mathfrak{h}]\otimes V$ that satisfies conditions $(1)$ and $(2)$ automatically satisfies $(3)$ as well. To see that, first note that $E_{i}^{n}.f=0$ for all $n\ge 2$. Next, for any vector $v\in V[0]$, either $E_{i}.v=0$, or $E_{i}.v\ne 0, E_{i}^2.v=0$ and $v$ generates an $(\mathfrak{sl}_{2})_{i}$ representation isomorphic to either the three dimensional irreducible representation $L_{2}$, or the direct sum of the trivial representation $L_{0}$ with $L_{2}$. Then condition $(1)$ means that $f$ is a sum of functions of the form $f_{1}v_{1}$ and functions of the form $f_{2}v_{2}$, for some $f_{1,2}\in \mathbb{C}[\mathfrak{h}]$, some $v_{1}$ which generate a trivial $(\mathfrak{sl}_{2})_{i}$ representation and some $v_{2}$ in the zero weight space of some three dimensional $(\mathfrak{sl}_{2})_{i}$ representation. Condition $(2)$ implies that $f_{1}$ is an even function and $f_{2}$ an odd one with respect to the action of the element $s_{i}\in W$ corresponding to $(\mathfrak{sl}_{2})_{i}$, so $E_{i}.f(h)=f_{2}(h)E_{i}.v_{2}$ is divisible by $\alpha_{i}$. This reduces the theorem \ref{KNV1} to theorem 1 in \cite{Broer}.

In the context of quantum groups, the same analysis applies to theorem \ref{main}. If $V$ is trivial, then conditions $(1)$ and $(3)$ are satisfied, and condition $(2)$ reduces to $f(q^{w\lambda})=f(q^{\lambda})$ because $\mathcal{A}_{w,V}(\lambda)=id$ (by lemma \ref{explicitA}). If $V$ is small in the sense of \cite{Broer}, then for any copy of $U_{q}(\mathfrak{sl}_{2})_{i}$, the only representations of it that contain a nonzero vector in $V[0]$ are direct sums of trivial and three dimensional irreducible representations. We again conclude that any function $f\in O(H)\otimes V$ which satisfies $(1)$ and $(2)$ must be a sum of functions of the form $f_{1}v_{1}$ and  $f_{2}v_{2}$, with $v_{1}$ in some copy of $L_{0}$ and $v_{2}$ in some copy of $L_{2}$. If $f$ also satisfies $(2)$, then by the proof of lemma \ref{sl2}, $f_{2}$ is of the form $f_{2}(q^z)=g(q^z)\cdot (q^{z+1}-q^{-z-1})$, so $E_{i}.f_{2}$ is divisible by $(1-q_{i}^2e^{\alpha_{i}})$. All the other parts of condition $(3)$ are satisfied trivially, as $E_{i}.v_{1}=0$ and $E_{i}^2.v_{2}=0$. So, in the case $V$ is small, condition $(3)$ is unnecessary, and theorem \ref{main} reduces to a quantum version of theorem 1 from \cite{Broer}.

\end{remark}


\begin{thebibliography}{99}
\begin{normalsize}

\bibitem[Br]{Broer} Broer, A, \emph{The sum of generalized exponents and Chevalley's restriction theorem for modules of coinvariants}, Indag. Math, New Ser. 6, No 4, 385-396 (1995)

\bibitem[Bo]{Borel} Borel, A. \emph{Linear algebraic groups}, Springer-Verlag, New York (1991)

\bibitem[EV1]{EV1} Etingof, P. and Varchenko, A, \emph{Dynamical Weyl Groups and Applications}, Advances in Mathematics 167, 74-127 (2002)

\bibitem[EV2]{EV2} Etingof, P. and Varchenko, A, \emph{Traces of Intertwiners for Quantum Groups and Difference Equations, I}, Duke Mathematical Journal Vol. 104 No. 3, 391-432 (2000)

\bibitem[EFK]{EFK}Etingof, P, Frenkel, I, Kirillov, A, Jr, \emph{Spherical Functions on Affine Lie Groups}, Duke Mathematical Journal Vol. 80 No. 1, 59-90 (1995)

\bibitem[KNV]{KNV} Khoroshkin, S, Nazarov, M. and Vinberg, E, \emph{A Generalized Harish-Chandra Isomorphism}, Adv. Math. 226, 1168-1180 (2011)

\bibitem[KS]{KS} Korogodski, L. and Soibelman, Y, \emph{Algebras of Functions on Quantum Groups: Part I}, Mathematical Surveys and Monographs, Vol 56, AMS (1998)

\bibitem[T]{T} Tanisaki, T, \emph{Harish-Chandra Isomorphisms for Quantum Algebras}, Commun. Math. Phys. 127, 555-571 (1990)


\end{normalsize}
\end{thebibliography}
\end{document}